\documentclass[aap,reqno]{imsart}
\pdfoutput=1
\setattribute{journal}{name}{}
\usepackage{a4wide}
\usepackage{amssymb}
\usepackage{mathrsfs}
\usepackage{amsmath,amsfonts}
\usepackage{amsthm}
\usepackage[usenames,dvipsnames]{color}
\usepackage{thmtools,thm-restate}
\usepackage{appendix}
\usepackage{wrapfig}

\usepackage{blkarray}

\usepackage{float}
\usepackage{bbm}
\usepackage{bm}
\usepackage{enumerate}
\usepackage{dsfont} 
\usepackage{graphicx}
\usepackage[utf8]{inputenc}
\usepackage[square,numbers]{natbib}
\usepackage{url}
\usepackage{comment}
\usepackage{multicol}
\usepackage{algorithm}
\usepackage{algorithmic}

\usepackage{hyperref}
\usepackage{scalerel}
\usepackage[shortlabels]{enumitem}
\usepackage{verbatim}

\usepackage{caption} 
\usepackage{subcaption}

\usepackage{calc}
\usepackage{tikz}
\usetikzlibrary{arrows, automata}
\usetikzlibrary{positioning}
\usepackage{tikz-3dplot}
\usetikzlibrary{calc,tikzmark}
\usetikzlibrary{decorations.markings}
\tikzstyle{vertex}=[circle, draw, inner sep=0pt, minimum size=6pt]

 

\numberwithin{equation}{section}

\theoremstyle{plain} 
\newtheorem{theorem}{Theorem}[section]
\newtheorem{lemma}[theorem]{Lemma}
\newtheorem{proposition}[theorem]{Proposition}
\newtheorem{corollary}[theorem]{Corollary}
\newtheorem{definition}[theorem]{Definition}
\newtheorem{example}[theorem]{Example}
\newtheorem{remark}[theorem]{Remark}

\newcommand{\bthe}{\begin{theorem}}
\newcommand{\ethe}{\end{theorem}}

\newcommand{\ben}{\begin{enumerate}}
\newcommand{\een}{\end{enumerate}}

\newcommand{\bit}{\begin{itemize}}
\newcommand{\eit}{\end{itemize}}

\newcommand{\beq}{\begin{equation}}
\newcommand{\eeq}{\end{equation}}

\newcommand{\ble}{\begin{lemma}}
\newcommand{\ele}{\end{lemma}}

\newcommand{\bde}{\begin{definition}}
\newcommand{\ede}{\end{definition}}

\newcommand{\bco}{\begin{corollary}}
\newcommand{\eco}{\end{corollary}}

\newcommand{\bpr}{\begin{proposition}}
\newcommand{\epr}{\end{proposition}}

\newcommand{\brem}{\begin{remark}\rm}
\newcommand{\erem}{\end{remark}}

\newcommand{\bproof}{\begin{proof}}
\newcommand{\eproof}{\end{proof}}

\newcommand{\bexam}{\begin{example}\rm}
\newcommand{\eexam}{\end{example}}

\newcommand{\beao}{\begin{eqnarray*}}
\newcommand{\eeao}{\end{eqnarray*}\noindent}

\newcommand{\balo}{\begin{align*}}
\newcommand{\ealo}{\end{align*}}

\newcommand{\balm}{\begin{align}}
\newcommand{\ealm}{\end{align}\noindent}

\newcommand{\beam}{\begin{eqnarray}}
\newcommand{\eeam}{\end{eqnarray}\noindent}

\newcommand{\barr}{\begin{array}}
\newcommand{\earr}{\end{array}}


\newcommand{\N}{\mathbb{N}}
\renewcommand\P{\mathbb{P}}
\newcommand{\R}{\mathbb{R}}

\def\bC{\mathbb{C}}

\def\bE{\mathbf E}

\newcommand{\D}{\mathcal{D}}

\def\cald{\mathcal{D}}

\def\supp{{\rm supp}}
\newcommand{\sgn}{\text{sgn} }
\DeclareMathOperator*{\argmax}{arg\,max}
\DeclareMathOperator*{\argmin}{arg\,min}

\def\bone{\boldsymbol 1}
\def\bA{\boldsymbol A}
\def\bB{\boldsymbol B}
\def\bC{\boldsymbol C}

\def\bE{\boldsymbol E}
\def\bF{\boldsymbol F}
\def\bG{\boldsymbol G}
\def\bI{\boldsymbol I}

\def\bU{\boldsymbol U}

\def\bX{\boldsymbol X}

\def\bZ{\boldsymbol Z}

\newcommand{\nto}{{n\to\infty}}

\newcommand{\al}{{\alpha}}
\newcommand{\la}{{\lambda}}

\newcommand{\eps}{\varepsilon}

\newcommand{\ov}{\overline}

\newcommand{\wt}{\widetilde}

\newcommand{\pa}{{\rm pa} } 
\newcommand{\de}{{\rm de} } 
\newcommand{\an}{{\rm an} } 
\newcommand{\An}{{\rm An} }

\newcommand{\halmos}{\quad\hfill\mbox{$\Box$}} 

\widowpenalty10000
\clubpenalty10000
\allowdisplaybreaks 
\definecolor{darkred}{RGB}{139,0,0}
\definecolor{darkgreen}{RGB}{0,139,0}



\endlocaldefs

\begin{document}
\begin{frontmatter}

\title{Recursive max-linear models with propagating noise}
\runtitle{Recursive max-linear models with propagating noise}

\author{\fnms{Johannes} \snm{Buck} 
\ead[label=e1]{j.buck@tum.de}}
\address{Center for Mathematical Sciences\\Technical University of Munich\\ Boltzmanstrasse 3\\ 85748 Garching, Germany\\  \printead{e1}}
\and
  \author{\fnms{Claudia} \snm{Kl\"uppelberg} \ead[label=e2]{cklu@ma.tum.de}}
  \address{Center for Mathematical Sciences\\Technical University of Munich\\ Boltzmanstrasse 3\\ 85748 Garching, Germany\\  \printead{e2}}
 \affiliation{Technical University of Munich}
 
\runauthor{J. Buck and C. Kl\"uppelberg}

\begin{abstract}
Recursive max-linear vectors model causal dependence between node variables by a structural equation model, expressing each node variable as a max-linear function of its parental nodes in a directed acyclic graph (DAG) and some exogenous innovation. 
For such a model, there exists a unique minimum DAG,
represented by the Kleene star matrix of its edge weight matrix, 
which identifies the model and can be estimated. 
For a more realistic statistical modeling we introduce some random observational noise. 
A probabilistic analysis of this new noisy model reveals that the unique minimum DAG representing the distribution of the non-noisy model remains unchanged and identifiable. Moreover, the distribution of the minimum ratio estimators of the model parameters at their left limits are completely determined by the distribution of the noise variables up to a positive constant. 
Under a regular variation condition on the noise variables we prove that the estimated Kleene star matrix converges to a matrix of independent Weibull entries after proper centering and scaling.  
\end{abstract}

\begin{keyword}[class=MSC]
\kwd{60G70} 
\kwd{62F12} 	
\kwd{62G32}  
\kwd{62H22} 
\end{keyword}

\begin{keyword}
\kwd{graphical model}
\kwd{Bayesian network}
\kwd{directed acyclic graph}
\kwd{extreme value analysis}
\kwd{max-linear model}
\kwd{noisy model}
\kwd{regular variation}
\end{keyword}

\end{frontmatter}

\section{Introduction}\label{s1}

Graphical modeling has shown to be a powerful tool for understanding causal dependencies in a multivariate random vector. However, most models have been linear and limited to discrete or Gaussian distributions (see e.g. \cite{KF} and \cite{lau}).
Such models lead to severe underestimation of large risks and, therefore, are not suitable in the context of extreme risk assessment.
First examples combining extreme value methods with graphical models include  flooding in river networks (\cite{engelke:hitz:18}), financial risk (\cite{einmahl2016}, \cite{krali}), and nutrients (\cite{krali}).

We consider the class of recursive max-linear (ML) models, which has been defined in \cite{nadine1}. A recursive ML model is defined by a structural equation model (SEM) of the form
\begin{align}\label{nonnoise}
	X_i= \bigvee_{j \in \pa(i)}c_{ji} X_j \lor Z_i, \quad i=1,\ldots,d
\end{align}
where the dependence structure between random variables is represented by a DAG $\D:=(V,E)$ with node set $V:=\{1,\ldots,d\}$ and edge set $E=E(\D)\subseteq V\times V$, and each variable $X_i$ for $i\in V$ has a representation in terms of ML functions of its parental nodes $\pa(i)=\{j\in V: (j,i)\in E\}$ and an independent innovation $Z_i$. 

Both, SEMs (e.g. \cite{Bollen}, \cite{pearl}) and directed graphical models (e.g. \cite{KF}, \cite{lau}, \cite{spirt}) are well-established and widely used to understand causality.

ML models similar to \eqref{nonnoise} have been proposed and studied in a time series context (e.g. \cite{davis:resnick}), in terms of moving maxima processes (e.g. \cite{hall:2002}), or as tropical models in algebra (e.g. \cite{joswig:20}, \cite{MS})  with applications to various optimization problems (e.g. \cite{BCOQ}, \cite{butkovic}, \cite{ngoc_auction}).

As shown in \cite{KL2017} recursive ML models respect the basic Markov properties associated with DAGs (e.g.  \cite{lauritzen:01},\cite{Lauritzen1990}). 
Moreover, the equation system \eqref{nonnoise} has the solution
\begin{align*}
	X_i= \bigvee_{j \in \pa(i)}b_{ji} Z_j , \quad i=1,\ldots,d,
\end{align*}
with ML coefficient matrix (in tropical algebra called the Kleene star matrix) $\bB:=(b_{ij})_{d \times d}$, see \cite{butkovic}, Corollary 1.6.16. 
Unlike the edge weight matrix $\bC=(c_{ij})_{d \times d}$, $\bB$ is identifiable and completely determines the distribution of $\bX:=(X_1,\ldots, X_d)$ (see \cite{nadine2}, Theorem 1). Also, $\bB$ is idempotent with respect to the tropical matrix multiplication defined in \eqref{ch2:odot} below, and defines a graphical model on a DAG. Furthermore, \cite{nadine2} proposes a minimum ratio estimator for $\bB$, which itself is idempotent, and is a generalized maximum likelihood estimator in the sense of \cite{gmle}.

We extend the original model \eqref{nonnoise} by allowing for multiplicative 
observation errors and define 
\begin{align}\label{ext}
	U_i= \Big(\bigvee_{j \in \pa(i)}c_{ji} U_j \lor Z_i\Big)\varepsilon_i, \quad i=1,\ldots,d,
\end{align}
with $\varepsilon_i \geq 1$ and iid for $i=1,\ldots,d$. 
By taking advantage of tropical algebra, we present in Theorem~\ref{solution} a solution of \eqref{ext} which represents each node variable $U_i$ in terms of a ML function of its ancestral nodes and an independent innovation $Z_i$ given by
\begin{align*}
    U_i= \bigvee_{j \in \an(i)\cup\{i\}}\bar b_{ji} Z_j,\quad i=1,\ldots,d,
\end{align*}
where $\an(i)$ denotes the ancestors of $i$ and $\bar b_{ji}$ are random variables involving the edge weights and the noise variables. 

It comes as no suprise that the true DAG and edge weights for a recursive ML model with propagating noise inherit the non-identifiability property from the non-noisy model. 
However, as we will prove in section~\ref{s4}, the ML coefficient matrix $\bB=(b_{ij})_{d\times d}$
remains identifiable in spite of the observational noise and even if we do not know the underlying DAG. 

To link up our new model \eqref{ext} with existing literature, observe that a log-transformation of \eqref{ext} yields 
\begin{align}\label{logtrans}
	\tilde U_i= \bigvee_{j \in \pa(i)}( \tilde c_{ji}+ \tilde U_j) \lor  \tilde Z_i + \tilde \varepsilon_i, \quad i=1,\ldots,d
\end{align}
with $\tilde \varepsilon_i \geq 0$. Thus, for every $j \in \pa(i)$, the difference $\tilde U_i-\tilde U_j$ is lower-bounded by $\tilde c_{ji}$ and 
$$\mathbb P(\tilde U_i - \tilde U_j \leq \tilde c_{ji}+x  \mid \tilde U_i = \tilde c_{ji}+ \tilde U_j + \tilde \varepsilon_i )=\mathbb P( \tilde \varepsilon_i \leq x ).$$

The estimation of (linear) functions with one-sided errors has been considered in the literature before. For instance, in \cite{hvk:09} and \cite{jmr:14} observations are given by $Y_j=f(X_j)+\eps_j$ for $j=1,\dots,n$ with observation errors $\eps_j>0$, with density given conditionally or unconditionally on $X_j=x$, and $f$ describes some frontier or boundary curve, which has to be estimated. 
To present an archetypical example, consider the linear regression problem stated in \cite{smith:1985} and \cite{smith:1994} as $Y_i=\beta + \eps_i$ for $i=1,\dots,n$ and observation errors, which have density $g(x)\sim \alpha c x^{\alpha-1}$ as $x\downarrow 0$ for $\alpha,c>0$. 
In these papers, the focus is on the non-regular case, when $\al<2$. Then $\beta$ can be estimated by the sample minimum $Y_{1,n}$ which has a Weibull limit law:
\begin{align} \label{Smith:1985}
    \lim_{\nto} \mathbb P\big((nc)^{-1/\alpha}(Y_{1,n}-\beta_0 )\leq x\big) = 1-\exp(-x^{-\alpha}), \quad 0<x<\infty.
\end{align}
The work in \cite{smith:1985} has been used in \cite{davis:mccormick:89} to estimate the coefficient $\phi$ of a first order autoregressive time series with positive innovations. They propose the minimum ratio estimator $\hat\phi=\bigwedge_{j=1}^n X_j/X_{j-1}$ and show in their Corollary~2.4 that it also has a Weibull limit law similar to \eqref{Smith:1985}.

In our model \eqref{ext} we find two interpretations for the noise variables. 
Firstly, in the log-transformed version \eqref{logtrans} we consider a ML model as baseline model, which is observed with some additive noise.
A second representation is given in Corollary~\ref{solutioncor} below, where the edge and path weights become noisy by the noise variables. This gives rise to the interpretation that we observe the model parameters with noise similarly as in the regression examples above. 
As a consequence, a path from $j$ to $i$ realising the ML coefficient $b_{ji}$ is no longer deterministic but depends on the individual realizations of the noise variables. 
However, in Theorem~\ref{epscount} we show that at the left limit of support
the distribution of the ratio of two model components is determined by all noise variables along the path between the two nodes. 
Assuming noise variables with regularly varying distribution in their left limit of support, we propose a minimum ratio estimator and show in Theorem~\ref{3.11} that the estimated ML coefficient matrix converges to a matrix of independent Weibull entries after proper centering and rescaling.

The paper is organized as follows. In section \ref{s2}, we summarize the properties of recursive ML models as defined in \eqref{nonnoise} and state the most important results relevant for our paper. In section~\ref{s3} we consider the extension of the recursive ML model given in \eqref{ext}, which we coin the \emph{max-linear model with propagating noise} and present its solution and the main properties of this new model.
In section~\ref{s4} we address the identifiability of the ML model with propagating noise. 
Similarly as in \eqref{Smith:1985} we suggest minimum ratio estimators for the model parameters $\bB$.
In section~\ref{s5} we assume regular variation of the noise variables. 
Under this assumption, we show that the minimum ratios are asymptotically independent and Weibull distributed. 
Finally, in section~\ref{s6}, we provide a data example and apply the theory that we have derived in the previous sections. All proofs are postponed to an Appendix.

Throughout we use the following notation. 
$\mathbb R_+=(0,\infty)$ and $\overline \R_+=[0,\infty)$,  $x\wedge y=\min\{x,y\}$ and $x\vee y=\max\{x,y\}$ with $\bigwedge_{i\in\emptyset} x_i =\infty$ and $\bigvee_{i\in\emptyset} x_i =0$ for $x_i\in \mathbb R_+$. 
Bold letters denote vectors and matrices, e.g. $\bI_d$ denotes the $d \times d$ identity matrix. Moreover, all vectors are row vectors unless stated otherwise.
For two functions $f,g$ we write $f(x) \sim g(x)$ as $x \downarrow c$ if $\lim_{x \downarrow c}f(x)/g(x)=1$ and $\bone$ denotes the indicator function. Moreover, $\an(i)$, $\pa(i)$ and $\de(i)$ denote the ancestors, the parents, and the descendants of node $i$, respectively, and $\An(i):=\an(i) \cup \{i\}$. Every edge $(j,i) \in E$ is a directed edge $j \to i$.  Finally, for a path $p=[k_0\rightarrow \ldots \rightarrow k_n]$ we define the {\em node set on the path (excluding the initial node)} by $S_{p}:=\{k_1,\ldots,k_n\}$ and its {\em path length} by $|S_p|$. For a random variable $Y$ with distribution function $F_Y$, the symbol $F^\leftarrow_Y$ denotes its {\em quantile function.}

\section{Preliminaries — Recursive max-linear models}\label{s2}

We first formally introduce the class of recursive ML models and state their most important results for this paper. Let $\mathcal D = (V, E)$ be a DAG with nodes $V= \{1,\ldots, d\}$ and edges $E$.
Then a random vector $\bX:=\left(X_1,\ldots, X_d\right)$ is a \emph{recursive max-linear} \emph{vector} or follows a {\em max-linear Bayesian network} on $\mathcal D$ if
\begin{align}\label{olddef}
    X_i:= \bigvee_{k \in \pa(i)}c_{ki}X_k\lor Z_i,\quad i \in 1,\ldots,d,
\end{align}
with positive edge weights $c_{ki}$ for $i \in V$ and $k \in \pa(i)$, and independent positive random variables $Z_1,\ldots,Z_d$ with support
$\mathbb R^+_0:=[0, \infty)$ and atom-free distributions. We shall refer to $\bZ:=(Z_1,\ldots,Z_d)$ as the \emph{vector of innovations}. 

For a path $p=[j=k_0 \rightarrow k_1 \rightarrow \ldots \rightarrow k_n=i]$ from $j$ to $i$ we define the {\em path weight} 
\begin{align}\label{path_weight}
    d_{ji}(p):=\prod_{l=0}^{n-1}c_{k_lk_{l+1}}.
\end{align}
Denoting the set of all paths from $j$ to $i$ by $P_{ji}$, we define the ML coefficient matrix $\bB=(b_{ij})_{d \times d}$ of $\bX$ with entries 
\begin{align*}
    b_{ij}:=\bigvee\limits_{p \in P_{ij}} d_{ij}(p) \quad \text{for } i \in \an(j), \quad b_{ii}=1, \quad \text{and} \quad b_{ij}=0 \quad \text{for } i \in V\setminus \An(j).
\end{align*}
The components of $\bX$ can also be expressed as ML functions of their ancestral innovations and an independent one; the corresponding ML coefficients are the entries of $\bB$:
\begin{align}\label{olddef:2nddef}
    X_i=\bigvee_{k \in \An(i)}b_{ki}Z_k, \quad i \in 1,\ldots, d,
\end{align}
which can be shown by a path analysis as in Theorem~2.2 in \cite{nadine1} or by tropical algebra as in \eqref{eq:dotrepr} below, and as we explain now.

For two non-negative matrices $\bF$ and $\bG$, where the number of columns in $\bF$ is equal to the number of rows in $\bG$, we define the matrix product $\odot: \overline\R_+^{m\times n}\times \overline\R_+^{n\times p} \rightarrow \overline\R_+^{m\times p}$
by
\begin{align}\label{ch2:odot} 
(\bF=(f_{ij})_{m\times n},\bG=(g_{ij})_{n\times p}) \mapsto \bF\odot \bG :=\Big(\bigvee\limits_{k=1}^n f_{ik}g_{kj}\Big)_{m\times p}.
\end{align}
The triple  $(\overline \R_+,\vee,\cdot)$, 
is an idempotent semiring with $0$ as 0-element and  $1$ as 1-element and 
the operation $\odot$ is therefore a matrix product over this semiring; see for example \cite{butkovic}. 
Denoting by $\mathcal M$ all $d\times d$ matrices with non-negative entries and by $\vee$ the componentwise maximum between two matrices, $(\mathcal M,\vee,\odot)$ is also a semiring with the null matrix as 0-element and the $d\times d$ identity matrix  $\bI_d$ as 1-element.  

The matrix product $\odot$ allows us to represent the ML coefficient matrix $\bB$ of $\bX$ in terms of the edge weight matrix $\bC:=(c_{ij} \bone_{\pa(j)}(i))_{d\times d}$ of $\cald$, since \eqref{olddef} can be rewritten as 
\begin{align}\label{eq:XtroprepX}
   \bX=\bX\odot \bC\vee\bZ 
\end{align}
with unique solution (equivalent to \eqref{olddef:2nddef}) given by
 \begin{align}\label{eq:dotrepr}
 \bB= (\bI_d\vee \bC)^{\odot (d-1)}= \bigvee_{k=0}^{d-1}  \bC^{\odot k}, \qquad \bX= \bZ\odot \bB,
   \end{align}
where $\bB$ is the {\em Kleene star matrix} and we have let $\bA^{\odot 0}=\bI_d$ and $\bA^{\odot k}= \bA^{\odot(k-1)}\odot \bA$ for $\bA\in \overline\R_+^{d\times d}$ and $k\in\mathbb{N}$; see Proposition~1.6.15 of \cite{butkovic} as well as Theorem~2.4 and Corollary 2.5 of \cite{nadine1}. For more information on the max-times (tropical) algebra in ML models see section~2.2 in \cite{Ngoc_etal}.

We have seen that a recursive ML vector $\bX$  has two representations, one in terms of parental nodes $X_j$ and edge weights $c_{ji}$ and another in terms of innovations $Z_j$ and ML coefficients $b_{ji}$. However, while the ML coefficient matrix $\bB$ of $\bX$ is identifiable from the distribution of $\bX$, the edge weight matrix $\bC$ is generally not, see Theorem~5.4(b) in \cite{nadine1}.  Theorem~5.3 in that paper and Theorem~2 in \cite{nadine2} show that an edge with edge weight $c_{ji}$ is identifiable from $\bB$ if and only if it is the unique path from $j$ to $i$ with $d_{ji}(p)=b_{ji}$. 

For a recursive ML vector $\bX$ on a DAG $\mathcal D=(V,E)$ and ML
coefficient matrix $\bB$ this result leads to the following definition.

\begin{definition}\label{minD:original}
    Let $\bX\in\R_+^d$ be a recursive ML vector on the DAG $ \mathcal D=(V, E)$ with ML coefficient matrix $\bB$. We define the {\em minimum ML DAG of $\bX$} as 
    \begin{align*}
        \mathcal D^B=(V,E^B):=\Big(V,\Big\{(j,i) \in E: b_{ji}>\bigvee\limits_{k \in \de(j) \cap \pa(i)}\frac{b_{jk}b_{ki}}{b_{kk}} \Big\}\Big).
    \end{align*}
\end{definition} 

Moreover, it has be shown that for a recursive ML vector $\bX$ for the support it holds that
\begin{align*}
    \supp(X_i/X_j) & =
    \begin{cases}
        [b_{ji},\infty) & \text{for } j \in \an(i), \\
        [0,1/b_{ij}]    & \text{for } i \in \an(j), \\
        \{1\}    & \text{for } i =j, \\
        \mathbb R_+     & \text{otherwise, }
    \end{cases}
\end{align*}
with $\mathbb P(X_i/X_j=b_{ji})>0$ for all $j \in \an(i)$; see Lemma~1 of \cite{nadine2}. 
Hence, for a given iid sample $\bX^1,\ldots,\bX^n$ from $\bX$ 
define a minimum ratio estimator $\hat\bB$ of $\bB$ by
$\hat{b}_{ij}:=\bigwedge_{k=1}^n (X_i^k/X_j^k)$ for $i,j\in V$. 
Moreover, when the DAG $\D$ is known, we define $\bB_0$ by
\begin{align*}
\bB_0 = (B_0(i,j))_{d \times d} :=\Big(\bigwedge\limits_{k=1}^n \frac{X_j^k}{X_i^k} \bone_{\pa(j) }(i)\Big)_{d \times d}\quad\mbox{and set}\quad
    \hat{\bB}=(\bI_d \lor \bB_0)^{\odot(d-1)}.
\end{align*}
Theorem~4 of \cite{nadine2} ensures that $\hat{\bB}$ is a generalized maximum likelihood estimate (GMLE) in the sense of \cite{gmle}.

\section{Recursive ML model with propagating noise} \label{s3}

In this section we present structural results for the recursive ML model with propagating noise, in particular, we investigate which properties of the non-noisy model prevail.

\begin{definition}\label{def:mlprop}
A vector  $\bU \in \mathbb R^d_+$ is a {\em recursive ML vector with propagating noise}  on a DAG $\mathcal D=(V,E)$, if 
\begin{align} \label{1stdef_1}
    U_i := \Big(\bigvee_{k \in \pa(i)}c_{ki}U_k\lor Z_i\Big)\eps_i,\quad i \in 1,\ldots,d,
\end{align}
with edge weight matrix $\bC:=(c_{ij} \bone_{\pa(j)}(i))_{d\times d}$.
The noise variables $\varepsilon_1,\ldots,\varepsilon_d$ are iid and atom-free random variables with $\varepsilon_i\ge 1$ and unbounded above for all $i \in V$, and independent of the innovations vector $\bZ:=(Z_1,\ldots, Z_d)$. For simplicity, we denote by $\varepsilon$ a generic noise variable and by $Z$ a generic innovation. 
\end{definition}

Although the noise variables act on the observations, formally we can view them as random scalings of edge weights.
More precisely, for a path $p=[j=k_0 \rightarrow k_1 \rightarrow \ldots \rightarrow k_n=i]$ from $j$ to $i$ we define the {\em random path weight} $\bar{d}_{ji}$ similarly to the definition of $d_{ji}$ in \eqref{path_weight} as
\begin{align}\label{randompathweight}
    \bar{d}_{ji}(p):=\varepsilon_j \prod_{l=0}^{n-1}c_{k_lk_{l+1}}\varepsilon_{k_{l+1}} = d_{ji}(p)\eps_j\prod_{l=0}^{n-1}\eps_{k_{l+1}}.
\end{align}
If we define the {\em random edge weight matrix} 
    \begin{align}\label{randomweights}
        \bar{\bC}=(\bar c_{ij})_{d\times d} :=(c_{ij} \varepsilon_j \bone_{\pa(j)}(i))_{d\times d}
    \end{align}
 we can rewrite \eqref{randompathweight} as 
    \begin{align*}
        \bar{d}_{ji}(p):=\varepsilon_{j} \prod_{l=0}^{n-1}\bar{c}_{k_lk_{l+1}}
    \end{align*}
   for every path $p=[j=k_0 \rightarrow k_1 \rightarrow \ldots \rightarrow k_n=i]$ from $j$ to $i$. Hence, we can view the noise variables as random scalings for the edge weights $c_{ji}$.
    Since $\varepsilon \geq 1$, the edge weights $c_{ji}$ of the non-noisy model are lower bounds for the random edge-weights $\bar{c}_{ji}$ of the propagating noise model.

Again denoting the set of all paths from $j$ to $i$ by $P_{ji}$, we define the \emph{random ML coefficient matrix} $\bar{\bB}=(\bar{b}_{ij})_{d \times d}$ of $\bU$ with entries
\begin{align}\label{randomb}
    \bar{b}_{ji}:=\bigvee\limits_{p \in P_{ji}} \bar{d}_{ji}(p) \quad \text{for } j \in \an(i), \quad \bar{b}_{jj}=\varepsilon_{j}, \quad \text{and} \quad \bar{b}_{ji}=0 \quad \text{for } j \in V\setminus \An(i).
\end{align}

We next show that there exists a solution of \eqref{1stdef_1} in terms of the ancestral innovations $\bZ$ and $\bar{\bB}$. All proofs of this section are postponed to Appendix~\ref{A}.

    \begin{theorem}\label{solution}
    Let $\bU \in \mathbb R^d_+$ be a recursive ML vector with propagating noise on a DAG $\mathcal D$ as in \eqref{1stdef_1}. Define  $(\bE_d)_{d \times d}$ as the diagonal matrix given by
    \begin{align*}
        E_d(i,i)= \varepsilon_i\quad\mbox{for } i \in V\quad\mbox{and}\quad E_d(i,j)=0 \quad\mbox{for } i,j \in V \ \mbox{and }   i \neq j.
    \end{align*}
    We rewrite \eqref{1stdef_1}
  in matrix form by means of the matrix multiplication \eqref{ch2:odot}  as
$$\bU = \big(\bU\odot \bC \vee \bZ\big)\odot \bE_d.$$
Then $\bU$ has a unique solution in terms of the tropical matrix multiplication with random matrix $\bar{\bB}$ given by
    \begin{align}\label{Bstern}
        \bar{\bB}=\bE_d \odot (\bI_d\vee \bar{\bC})^{\odot (d-1)}, \quad \bU=\bZ \odot \bar{\bB},
    \end{align} 
with $\bar{\bC}$ as defined in \eqref{randomweights}.  
    \end{theorem}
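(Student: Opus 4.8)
The plan is to mimic the non-noisy derivation recorded in \eqref{eq:XtroprepX}--\eqref{eq:dotrepr}, keeping careful track of where the diagonal noise matrix $\bE_d$ enters. First I would verify the matrix form: componentwise, $(\bU\odot\bC\vee\bZ)\odot\bE_d$ has $i$-th entry $\big(\bigvee_{k}U_k c_{ki}\bone_{\pa(i)}(k)\vee Z_i\big)\varepsilon_i$, which is exactly \eqref{1stdef_1}, using that $\bE_d$ is diagonal so right-multiplication by $\bE_d$ in the semiring $(\overline\R_+,\vee,\cdot)$ simply rescales column $i$ by $\varepsilon_i$. Next, since $\bE_d$ is a diagonal matrix with strictly positive (indeed $\ge 1$) diagonal entries, it is invertible in $(\mathcal M,\vee,\odot)$ in the weak sense that $\bE_d\odot\bE_d^{-1}=\bI_d$ with $\bE_d^{-1}=\mathrm{diag}(\varepsilon_i^{-1})$; more to the point, I can push $\bE_d$ inside and write $\bU = \bU\odot(\bC\odot\bE_d)\vee\bZ\odot\bE_d = \bU\odot\bar\bC\vee\bZ\odot\bE_d$, because $\bC\odot\bE_d$ has $(k,i)$-entry $c_{ki}\bone_{\pa(i)}(k)\varepsilon_i=\bar c_{ki}$ by the definition \eqref{randomweights} of $\bar\bC$. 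This is the key algebraic reduction: conditionally on the noise $\bvarepsilon$, the vector $\bU$ satisfies a fixed-point equation of exactly the same shape as \eqref{eq:XtroprepX}, namely $\bU=\bU\odot\bar\bC\vee\bY$ with $\bY:=\bZ\odot\bE_d$.

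Now I would invoke the known solution theory for such tropical fixed-point equations — Proposition~1.6.15 of \cite{butkovic}, equivalently Theorem~2.4 and Corollary~2.5 of \cite{nadine1} — applied to the (random but fixed, given $\bvarepsilon$) matrix $\bar\bC$. This requires that $\bar\bC$ be the weighted adjacency matrix of a DAG, which holds because $\bar c_{ki}=c_{ki}\varepsilon_i\bone_{\pa(i)}(k)$ is supported on exactly the same edge set $E$ as $\bC$, and $\D$ is acyclic; acyclicity is what makes $\bar\bC^{\odot k}=\bzero$ for $k\ge d$ and hence guarantees uniqueness of the solution. The cited result then yields
$$\bU = \bY\odot(\bI_d\vee\bar\bC)^{\odot(d-1)} = \bZ\odot\bE_d\odot(\bI_d\vee\bar\bC)^{\odot(d-1)} = \bZ\odot\bar\bB,$$
with $\bar\bB:=\bE_d\odot(\bI_d\vee\bar\bC)^{\odot(d-1)}$ as claimed in \eqref{Bstern}, and uniqueness is inherited from the cited theorem.

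It remains to check that this $\bar\bB$ agrees entrywise with the combinatorial definition \eqref{randomb} in terms of random path weights $\bar d_{ji}(p)$; this is the step I expect to be the most bookkeeping-heavy, though not deep. Expanding $(\bI_d\vee\bar\bC)^{\odot(d-1)}=\bigvee_{k=0}^{d-1}\bar\bC^{\odot k}$, the $(j,i)$-entry is $\bigvee_{k=0}^{d-1}\bigvee_{j=k_0\to k_1\to\cdots\to k_k=i}\prod_{l=0}^{k-1}\bar c_{k_l k_{l+1}} = \bigvee_{p\in P_{ji}}\prod_{l=0}^{n-1}\bar c_{k_l k_{l+1}}$ (with the length-$0$ path contributing $1$ on the diagonal), and left-multiplying by $\bE_d$ prepends the factor $\varepsilon_j$, giving $\bar b_{ji}=\varepsilon_j\bigvee_{p\in P_{ji}}\prod_{l=0}^{n-1}\bar c_{k_l k_{l+1}}=\bigvee_{p\in P_{ji}}\bar d_{ji}(p)$ for $j\in\an(i)$, $\bar b_{jj}=\varepsilon_j$, and $\bar b_{ji}=0$ when there is no path, matching \eqref{randomb}. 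The only genuine subtlety is making sure the noise factor attached to each intermediate node appears exactly once along a path: in $\prod_{l}\bar c_{k_l k_{l+1}}=\prod_l c_{k_l k_{l+1}}\varepsilon_{k_{l+1}}$ the factor $\varepsilon_{k_{l+1}}$ is carried by the edge \emph{into} node $k_{l+1}$, so together with the leading $\varepsilon_j=\varepsilon_{k_0}$ from $\bE_d$ one gets $\varepsilon_{k_0}\varepsilon_{k_1}\cdots\varepsilon_{k_n}$, which is precisely the random path weight \eqref{randompathweight}; this is why $\bE_d$ must sit on the \emph{left} of the Kleene star and $\varepsilon_i$ is attached to the \emph{columns} of $\bar\bC$. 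I would close by noting that all of the above was argued for fixed $\bvarepsilon$, so it holds almost surely, and the joint law of $(\bU,\bar\bB)$ is then determined by that of $(\bZ,\bvarepsilon)$.
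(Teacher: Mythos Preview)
Your proposal is correct and follows essentially the same route as the paper: distribute $\bE_d$ to rewrite the fixed-point equation as $\bU=\bU\odot\bar\bC\vee\bZ\odot\bE_d$, then invoke the Kleene-star solution \eqref{eq:dotrepr} (equivalently Proposition~1.6.15 of \cite{butkovic}) for the DAG-supported matrix $\bar\bC$, and read off $\bar\bB=\bE_d\odot(\bI_d\vee\bar\bC)^{\odot(d-1)}$. Your additional bookkeeping verifying that this algebraic $\bar\bB$ matches the combinatorial definition \eqref{randomb} is more than the paper's proof spells out, but is a welcome sanity check rather than a different idea.
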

    
Since $\bar{b}_{ji}=0$ whenever $j \not \in \An(i)$, the representation \eqref{Bstern} can be rewritten as follows.

\begin{corollary}\label{solutioncor}
    Let $\bU$ be as in Theorem~\ref{solution} and $\bar{b}_{ji}$ be the random ML coefficients defined in $\eqref{randomb}$. Then \eqref{noisezdef} is equivalent to 
    \begin{align}\label{noisezdef}
        U_i= \bigvee_{j \in \An(i)}\bar{b}_{ji}Z_j, \quad i \in 1,\ldots,d.
    \end{align}
    \end{corollary}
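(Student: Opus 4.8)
The plan is to identify the entries of the random matrix $\bar\bB=\bE_d\odot(\bI_d\vee\bar\bC)^{\odot(d-1)}$ provided by Theorem~\ref{solution} with the random ML coefficients $\bar b_{ji}$ of \eqref{randomb}; once this is done, reading the solution $\bU=\bZ\odot\bar\bB$ componentwise is exactly \eqref{noisezdef}, and conversely \eqref{noisezdef} is nothing but this matrix identity written out, so the two representations are equivalent.

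First I would reduce the Kleene-star expression exactly as in the non-noisy case \eqref{eq:dotrepr}: since $(\mathcal M,\vee,\odot)$ is an idempotent semiring in which $\odot$ distributes over $\vee$, an easy induction gives $(\bI_d\vee\bar\bC)^{\odot(d-1)}=\bigvee_{k=0}^{d-1}\bar\bC^{\odot k}$ (cf.\ Proposition~1.6.15 in \cite{butkovic}). Next I would expand these powers by the usual walk counting: for $k\ge1$ the $(j,i)$-entry of $\bar\bC^{\odot k}$ equals the maximum of $\prod_{l=0}^{k-1}\bar c_{k_lk_{l+1}}$ over all walks $j=k_0\to k_1\to\cdots\to k_k=i$ of length $k$. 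By \eqref{randomweights} the matrix $\bar\bC$ is supported on the edge set $E$ of $\mathcal D$, so only walks that are directed paths in $\mathcal D$ contribute a nonzero term, and such a term exists only when $j\in\an(i)$ (acyclicity of $\mathcal D$ in particular excludes $j=i$ for $k\ge1$, and forces all paths to have at most $d-1$ edges, so truncating at $k=d-1$ loses nothing). Taking the maximum over $0\le k\le d-1$ and including the $k=0$ term $\bI_d$, the $(j,i)$-entry of $\bigvee_{k=0}^{d-1}\bar\bC^{\odot k}$ equals $\bigvee_{p\in P_{ji}}\prod_{l=0}^{n-1}\bar c_{k_lk_{l+1}}$ when $j\in\an(i)$, equals $1$ when $j=i$, and equals $0$ otherwise.

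Then I would apply $\bE_d$ on the left. Since $\bE_d$ is diagonal with $E_d(j,j)=\varepsilon_j$, left multiplication by $\bE_d$ under $\odot$ simply rescales row $j$ by $\varepsilon_j$, so the $(j,i)$-entry $\bar b_{ji}$ of $\bar\bB$ equals $\varepsilon_j$ times the $(j,i)$-entry of $\bigvee_{k=0}^{d-1}\bar\bC^{\odot k}$. Combined with the previous step and the identity $\bar d_{ji}(p)=\varepsilon_j\prod_{l=0}^{n-1}\bar c_{k_lk_{l+1}}$ from \eqref{randompathweight}--\eqref{randomweights}, this yields $\bar b_{ji}=\bigvee_{p\in P_{ji}}\bar d_{ji}(p)$ for $j\in\an(i)$, $\bar b_{ii}=\varepsilon_i$, and $\bar b_{ji}=0$ for $j\notin\An(i)$, which is precisely \eqref{randomb}. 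Hence $\bar\bB=(\bar b_{ij})_{d\times d}$, and reading $\bU=\bZ\odot\bar\bB$ row by row, using $\An(i)=\an(i)\cup\{i\}$ together with $\bar b_{ji}=0$ for $j\notin\An(i)$, gives $U_i=\bigvee_{j=1}^d Z_j\bar b_{ji}=\bigvee_{j\in\An(i)}\bar b_{ji}Z_j$, i.e.\ \eqref{noisezdef}; the reverse implication holds because \eqref{noisezdef} is just this identity restated. There is no real obstacle here: the argument is essentially the same bookkeeping that underlies \eqref{eq:dotrepr}, and the only point needing a line of care is the passage from walks to directed paths, which uses both the acyclicity of $\mathcal D$ and the fact that $\bar\bC$ vanishes off the edge set.
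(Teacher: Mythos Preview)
Your proof is correct and follows essentially the same approach as the paper. The paper treats the corollary as an immediate consequence of Theorem~\ref{solution}, simply noting that $\bar b_{ji}=0$ for $j\notin\An(i)$ so that $\bU=\bZ\odot\bar\bB$ restricts to the stated maximum; the identification of the Kleene-star formula $\bE_d\odot(\bI_d\vee\bar\bC)^{\odot(d-1)}$ with the path-weight definition \eqref{randomb} is left implicit (it mirrors the non-noisy case \eqref{eq:dotrepr}), whereas you spell this identification out explicitly via the walk expansion and acyclicity.
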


Note that the definition in~\eqref{1stdef_1} is equivalent to 
\begin{align} \label{1stdef_2}
   U_i= \tilde U_i \, \varepsilon_i\quad\mbox{with}\quad \tilde U_i:= \bigvee_{k \in \pa(i)}c_{ki}U_k\lor Z_i,\quad i \in 1,\ldots,d.
\end{align}
From this result we can compute the following representation.

\begin{corollary}\label{cor:3rddef}
    Let $\bU$ and $\bar{b}_{ji}$ be as in Corollary~\ref{noisezdef}.
 
    Then \eqref{noisezdef} is equivalent to
    \begin{align}\label{noise3rddef}
        U_i= \bigvee_{k \in \An(i)}\bar{b}_{ki}\tilde U_k, \quad i = 1,\ldots,d,
    \end{align}
    with $\tilde U_k$ as in \eqref{1stdef_2}.
\end{corollary}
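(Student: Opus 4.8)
The plan is to prove the identity \eqref{noise3rddef} directly, taking the already-established representation $U_i=\bigvee_{j\in\An(i)}\bar b_{ji}Z_j$ of Corollary~\ref{solutioncor} as given, together with the defining relation $U_k=\tilde U_k\,\varepsilon_k$ from \eqref{1stdef_2}. Since \eqref{noisezdef} is known to hold, showing that the right-hand side of \eqref{noise3rddef} also equals $U_i$ yields the claimed equivalence; alternatively one runs the same computation in reverse, expanding each $\tilde U_k$ via \eqref{1stdef_2} and iterating the recursion, to recover \eqref{noisezdef} from \eqref{noise3rddef}.

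First I would rewrite $\tilde U_k$. Because $\varepsilon_k\ge 1>0$, \eqref{1stdef_2} gives $\tilde U_k=U_k/\varepsilon_k$, and applying Corollary~\ref{solutioncor} to $U_k$ produces $\tilde U_k=\bigvee_{j\in\An(k)}(\bar b_{jk}/\varepsilon_k)Z_j$. Substituting this into the right-hand side of \eqref{noise3rddef} and using that multiplication by a nonnegative number distributes over $\vee$, we obtain $\bigvee_{k\in\An(i)}\bar b_{ki}\tilde U_k=\bigvee_{k\in\An(i)}\bigvee_{j\in\An(k)}\frac{\bar b_{ki}\bar b_{jk}}{\varepsilon_k}Z_j$. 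The crux is then the sub\-multiplicativity estimate $\bar b_{jk}\,\bar b_{ki}\le\varepsilon_k\,\bar b_{ji}$ for all $j\in\An(k)$ and $k\in\An(i)$, the noisy analogue of $b_{jk}b_{ki}\le b_{ji}$. To prove it, note that by distributivity $\bar b_{jk}\,\bar b_{ki}=\bigvee_{p_1\in P_{jk}}\bigvee_{p_2\in P_{ki}}\bar d_{jk}(p_1)\,\bar d_{ki}(p_2)$; for fixed $p_1=[j=k_0\to\cdots\to k_m=k]$ and $p_2=[k=k_m\to\cdots\to k_n=i]$ the concatenation $p=[k_0\to\cdots\to k_n]$ is again a path in $P_{ji}$ (the node sets of $p_1$ and $p_2$ meet only in $k$ because $\mathcal D$ is acyclic), and comparing the product formula \eqref{randompathweight} term by term shows that the junction node contributes the factor $\varepsilon_k$ once in $\bar d_{ji}(p)$ but twice in $\bar d_{jk}(p_1)\bar d_{ki}(p_2)$, i.e. $\bar d_{jk}(p_1)\bar d_{ki}(p_2)=\varepsilon_k\,\bar d_{ji}(p)\le\varepsilon_k\,\bar b_{ji}$; taking the maximum over $p_1,p_2$ gives the estimate. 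The degenerate cases $j=k$ (where $\bar b_{jk}=\varepsilon_k$) and $k=i$ (where $\bar b_{ki}=\varepsilon_i$) are immediate.

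Finally I would assemble the two inequalities. From $\frac{\bar b_{ki}\bar b_{jk}}{\varepsilon_k}\le\bar b_{ji}$ and the transitivity of the ancestor relation (so $\An(k)\subseteq\An(i)$ whenever $k\in\An(i)$) we get $\bigvee_{k\in\An(i)}\bar b_{ki}\tilde U_k\le\bigvee_{j\in\An(i)}\bar b_{ji}Z_j=U_i$ by Corollary~\ref{solutioncor}. For the reverse inequality it is enough to isolate the term $k=i$: there $\bar b_{ii}=\varepsilon_i$, hence $\bar b_{ii}\tilde U_i=\varepsilon_i\tilde U_i=U_i$ by \eqref{1stdef_2}, so the maximum over $k\in\An(i)$ is at least $U_i$. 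Combining the bounds yields \eqref{noise3rddef}. I expect the only real obstacle to be the bookkeeping in the sub\-multiplicativity step: one must track the noise factor attached to \emph{every} node on a path (both endpoints included) in \eqref{randompathweight} so that the double-counting at the junction is correctly identified as the single factor $\varepsilon_k$ to be divided out; the remaining manipulations of maxima are routine.
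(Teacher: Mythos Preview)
Your proof is correct and takes a genuinely different route from the paper's. The paper argues pointwise on realizations: it fixes the almost surely unique index $k^*\in\An(i)$ achieving the maximum in \eqref{noisezdef} and then shows by contradiction that $\tilde U_{k^*}=Z_{k^*}$, whence $\bar b_{k^*i}\tilde U_{k^*}=U_i$ and hence $\bigvee_k\bar b_{ki}\tilde U_k\ge U_i$. You instead run a purely algebraic two-sided bound. For the direction $\bigvee_k\bar b_{ki}\tilde U_k\ge U_i$ you simply isolate the term $k=i$ and use $\bar b_{ii}\tilde U_i=\varepsilon_i\tilde U_i=U_i$, which is considerably slicker than the argmax-plus-contradiction step and does not require continuity of the noise or the innovations. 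For the reverse direction you prove the submultiplicativity $\bar b_{jk}\bar b_{ki}\le\varepsilon_k\bar b_{ji}$ directly via path concatenation; this is precisely the content of Lemma~\ref{lemma1}(a), and the same inequality $\bar b_{li}\ge\bar b_{lk}\bar b_{ki}/\varepsilon_k$ also appears inside the paper's contradiction argument, but you make its role in closing the bound $\bigvee_k\bar b_{ki}\tilde U_k\le U_i$ explicit, whereas the paper's proof does not spell this direction out. The trade-off: your argument is self-contained and deterministic, at the cost of the path-concatenation bookkeeping you flag; the paper's argument is shorter once one accepts the upper bound as clear from the model structure, but leans on the a.s.\ uniqueness of the maximizing index.
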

{We next define critical and generic paths which play an essential role for the understanding of our model.}

\begin{definition}\label{def:critical}
Let $\D$ be a DAG with edge weight matrix $\bC$ and let $\bB$ be the corresponding ML coefficient matrix (i.e. the Kleene star of $\bC$). Let $p$ be a path from $j$ to $i$ with node set $S_p$.
\begin{itemize}
    \item[(a)]
$p$ is called a \emph{(non-random) critical path} if $d_{ji}(p)=b_{ji}$.
\item[(b)]
$p$ is called a \emph{generic path} if it is the only path satisfying $d_{ji}(p)=b_{ji}$. 
\item[(c)]
We call $\bC$ {\em generic}, if all paths in $\D$ are generic.
    \item[(d)]
$p$ is called a \emph{random critical path} if $\bar d_{ji}(p)=\bar b_{ji}$.
\item[(e)]
$p$ is called a {\em possible critical path realization}, if $U_i=U_j d_{ji}(p)\prod_{k \in S_{p}}\varepsilon_k= \tilde U_j \bar d_{ji}(p)$ happens with positive probability.
\end{itemize}
\end{definition}

\begin{remark}
We have defined a non-random critical path and a random critical path.
We want to emphasize, however, that while the first path property is simply inherited from $\bC$ via $\bB$, the second one is inherited from $\bC$ and the noise variables. 
We also note that by continuity of the innovations and the noise variables, any random critical path between a pair of nodes must be unique, although it may vary with the realizations of the noise variables.
\end{remark}

We explain the model and the notions of Definition~\ref{def:critical} in an example.

\begin{example} \label{dag1}
Consider the DAG:

   \begin{center}
   \begin{tikzpicture}
    [->,every node/.style={circle,draw},line width=0.8pt, node distance=1.6cm,minimum size=0.8cm,outer sep=1mm]
    \node (n1) {${1}$};
    \node (n2) [right of=n1] {${2}$};
    \node (n3) [right of=n2] {${3}$};
    \foreach \from/\to in {n1/n2,n2/n3}
    \draw[->, line width=0.8pt] (\from) -- (\to);
    \draw[->, thick] (n1) to[out=45, in=135] (n3);
\end{tikzpicture} 
   \end{center}
Then, $\bC$ is generic if and only if $c_{13}\neq c_{12}c_{23}$. Moreover, we have
\begin{align*}
    U_3=(\bar{c}_{13} \lor \bar{c}_{12} \bar{c}_{23})\varepsilon_1 Z_1 \lor \bar{c}_{23}\varepsilon_2 Z_2 \lor \varepsilon_3 Z_3,
\end{align*}
with $\bar{c}_{ji}=c_{ji}\varepsilon_i$ as defined in \eqref{randomweights}. 
Now assume that $c_{13}>c_{12}c_{23}$. In that case, $[1 \to 3]$ is the critical path, while the path $[1 \to 2 \to 3]$ is not critical. However,  $\mathbb P(\bar{c}_{13}<\bar{c}_{12}\bar{c}_{23})= \mathbb P(\varepsilon_2>c_{13}/(c_{12}c_{23}))>0$. For this reason, both paths can be random critical. Finally, all paths in $\mathcal D$ can be possible critical path realizations. To stress the difference between a random critical path and a possible critical path realization, observe that e.g. $[1\to 3]$ and $[2\to 3]$ can be random critical for the same realized noise and innovation variables, however, the two paths cannot be possible critical path realizations for the same noise and innovation variables up to a null set. 

In contrast, if $c_{13}<c_{12}c_{23}$ we have $\mathbb P(\bar{c}_{13}>\bar{c}_{12}\bar{c}_{23})= \mathbb P(\varepsilon_2<c_{13}/(c_{12}c_{23}))=0$. In this case, the path $[1 \to 3]$ can not be random critical and particularly not a possible critical path realization.

This illustrates that a path $p$ from $j$ to $i$  with path weight $d_{ji}(p)<b_{ji}$ may as well contribute to the distribution of $U_i$. 
However, an edge $p=[j \to i]$ with $d_{ji}(p)<b_{ji}$ is still not identifiable and does not change the distribution of $\bU$. 
\end{example}

While we still want to estimate the (non-random) ML coefficient matrix $\bB$, we first present some useful properties of  $\bB$  and $\bar{\bB}$ and a link between the noisy and non-noisy model as defined in \eqref{olddef} and \eqref{1stdef_1}, respectively.

\begin{lemma}\label{lemma1}
    Let $\bU \in \mathbb R^d_+$  be a recursive ML vector with propagating noise on a DAG $\mathcal D$ as defined in \eqref{1stdef_1} with $\bB$ and $\bar{\bB}$ defined in \eqref{eq:dotrepr} and \eqref{Bstern}, respectively. 
    Then the following assertions hold:
    \begin{enumerate}
        \item
    $\begin{aligned}[t]
                      \bar{b}_{ji} = \bigvee\limits_{k \in V}\frac{\bar{b}_{jk}\bar{b}_{ki}}{\bar{b}_{kk}} \ge  \bigvee\limits_{k \in \de(j) \cap \an(i)}\frac{\bar{b}_{jk}\bar{b}_{ki}}{\bar{b}_{kk}},
                  \end{aligned}$
                  where the inequality is strict, whenever the random critical path from $j$ to $i$ is the edge $j \to i$, or  $j=i$.
                \item
There exists some path $p:=[j \to \ldots \to k \to \ldots \to i]$ from $j$ to $i$ that passes through $k$ such that 
$$\bar d_{ji}(p)=\bar b_{ji}\quad\text{ if and only if }\quad\bar b_{ji}=\frac{\bar b_{jk}\bar b_{ki}}{\bar b_{kk}}.$$
        \item
              $\begin{aligned}[t]
                      \frac{U_i}{U_j}\geq \frac{\bar{b}_{ji}}{\bar{b}_{jj}} \geq b_{ji} \mbox{\quad with $b_{ji}=0$ for $j\notin\An(i)$} 
                  \end{aligned}$ 
        \item
              $\begin{aligned}[t] \label{supportnoise1}
                      \normalfont{\supp}(U_i/U_j) & =
                      \begin{cases}
                          [b_{ji},\infty) & \text{for } j \in \an(i),\\
                          [0,1/b_{ij}]    & \text{for } i \in \an(j), \\
                          \{1\}               & \text{for } i =j,         \\
                          \mathbb R_+     & \text{otherwise. }
                      \end{cases}
                  \end{aligned}$ \\
                  Moreover, for $j \neq i$, neither the distribution of $U_i/U_j$ nor the distribution of $U_j/U_i$ have any atoms.
        \item 
        If $b_{ji} = \bigvee\limits_{k \in \de(j) \cap \an(i)}\frac{b_{jk}b_{ki}}{b_{kk}}$, then  $\bar{b}_{ji} = \bigvee\limits_{k \in \de(j) \cap \an(i)}\frac{\bar{b}_{jk}\bar{b}_{ki}}{\bar{b}_{kk}}$.  \\
        \item
    If 
$b_{ji} > \bigvee\limits_{k \in \de(j) \cap \an(i)}\frac{b_{jk}b_{ki}}{b_{kk}} \text{ and } \de(j) \cap \an(i) \neq\emptyset$, then 
$$\P\Big(\bar{b}_{ji} > \bigvee\limits_{k \in \de(j) \cap \an(i)}\frac{\bar{b}_{jk}\bar{b}_{ki}}{\bar{b}_{kk}}\Big)>0\quad\mbox{ and}\quad
\P\Big(\bar{b}_{ji} = \bigvee\limits_{k \in \de(j) \cap \an(i)}\frac{\bar{b}_{jk}\bar{b}_{ki}}{\bar{b}_{kk}}\Big)>0.$$ 

    \end{enumerate}
\end{lemma}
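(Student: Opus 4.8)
\textbf{Proof plan for Lemma~\ref{lemma1}.}
The six assertions are of different flavours, so I would organize the proof accordingly. Parts (1) and (2) are purely algebraic statements about the random matrix $\bar{\bB}$ and should follow from the very same idempotency reasoning that gives $\bB=\bigvee_{k=0}^{d-1}\bC^{\odot k}$ in the non-noisy case. The plan is to observe that, by Theorem~\ref{solution}, $\bar{\bB}=\bE_d\odot(\bI_d\vee\bar\bC)^{\odot(d-1)}$ is (up to the left diagonal factor $\bE_d$) a Kleene star of $\bar\bC$, hence idempotent with respect to $\odot$ on the ancestral part of the graph; unwinding the definition of $\odot$ in \eqref{ch2:odot} turns $\bar b_{ji}=\bigvee_{k}\bar b_{jk}\bar b_{ki}/\bar b_{kk}$ into a statement about concatenating random path weights $\bar d(p)$ (the division by $\bar b_{kk}=\varepsilon_k$ exactly cancels the double-counted noise factor at the splice node $k$). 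For the strictness in (1) and the ``if and only if'' in (2), I would argue that $\bar b_{ji}=\bar b_{jk}\bar b_{ki}/\bar b_{kk}$ holds precisely when some random critical path (which is a.s.\ unique by the remark following Definition~\ref{def:critical}) passes through $k$; if the random critical path is the single edge $j\to i$ then it passes through no intermediate $k\in\de(j)\cap\an(i)$, forcing the inequality to be strict, and the case $j=i$ gives $\bar b_{jj}=\varepsilon_j$ which strictly dominates any product $\bar b_{jk}\bar b_{kj}/\bar b_{kk}$ along a genuine cycle-free detour (impossible in a DAG unless empty, where the right side is $0$).

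Part (3) is the ``lower bound'' statement: from Corollary~\ref{solutioncor}, $U_i=\bigvee_{k\in\An(i)}\bar b_{ki}Z_k\geq \bar b_{ji}Z_j$ and $U_j=\bigvee_{k\in\An(j)}\bar b_{kj}Z_k\geq \bar b_{jj}Z_j=\varepsilon_j Z_j$; but to get $U_i/U_j\geq \bar b_{ji}/\bar b_{jj}$ I need the reverse control on the numerator too, so the clean way is to use Corollary~\ref{cor:3rddef}, $U_i=\bigvee_{k\in\An(i)}\bar b_{ki}\tilde U_k\geq \bar b_{ji}\tilde U_j=\bar b_{ji}U_j/\varepsilon_j=(\bar b_{ji}/\bar b_{jj})U_j$, and then $\bar b_{ji}/\bar b_{jj}=\bar d_{ji}(p^\ast)/\varepsilon_j\geq d_{ji}(p^\ast)\prod_{k\in S_{p^\ast}}\varepsilon_k\geq d_{ji}(p^\ast)\geq b_{ji}$ wait— I should instead note $\bar b_{ji}/\varepsilon_j=\bigvee_p d_{ji}(p)\prod_{k\in S_p}\varepsilon_k\geq \bigvee_p d_{ji}(p)=b_{ji}$ since each $\varepsilon_k\geq 1$. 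Part (4) combines this lower bound with an upper bound: for $j\in\an(i)$, since $\varepsilon$ is unbounded above and atom-free and independent of $\bZ$, I would choose the event where the single critical path realization $[j\to\cdots\to i]$ dominates and push $U_j/\varepsilon_j$ arbitrarily close to realizing $b_{ji}$; the atomlessness claim follows because $U_i/U_j$ is, on each realization-of-critical-path event, a continuous function of the atom-free variables $\bZ,\bvarepsilon$ not identically constant off the boundary, and the boundary value $b_{ji}$ — although attained with positive probability in the non-noisy model — is here pushed to a limit and carries no atom; I would cite Lemma~1 of \cite{nadine2} together with a conditioning argument on which innovation/noise configuration is active. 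I expect the atomlessness to be the fiddliest point and would handle it by a careful case split on the (a.s.\ unique) index achieving the maximum in \eqref{noisezdef}.

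Parts (5) and (6) are the genuinely probabilistic statements and, in my view, (6) is the main obstacle. For (5), suppose $b_{ji}=\bigvee_{k\in\de(j)\cap\an(i)}b_{jk}b_{ki}/b_{kk}$, i.e.\ every non-random critical path from $j$ to $i$ factors through some $k\in\de(j)\cap\an(i)$; then \emph{every} path from $j$ to $i$ — critical or not — that does not factor through such a $k$ must be a single edge $j\to i$ with $d_{ji}([j\to i])<b_{ji}$, and I claim such an edge contributes a random path weight $\bar d_{ji}([j\to i])=c_{ji}\varepsilon_j\varepsilon_i$ that is a.s.\ strictly dominated by the best factored random path: the point is that the deficit $b_{ji}/c_{ji}>1$ is a fixed constant while the competing factored random path picks up at least the factor $\varepsilon_i$ that the edge also has, so one needs only that the extra noise mass along the longer route a.s.\ exceeds the fixed ratio — this is \emph{not} automatic and is where I must be careful, because with small probability the longer route's noise could be tiny; so more honestly, the correct argument is that $\bar b_{ji}=\bigvee_p \bar d_{ji}(p)$ and each path either factors through $\de(j)\cap\an(i)$ (contributing to the right-hand max) or is the edge $j\to i$; but the edge $j\to i$ \emph{also} factors through trivially when $\de(j)\cap\an(i)\ni k$ lies on it — rethinking, the clean statement is: under the hypothesis of (5) the edge $j\to i$ is not generic/identifiable and removing it does not change $\bU$ in distribution (as observed in Example~\ref{dag1}), so $\bar b_{ji}$ equals the max over paths through $\de(j)\cap\an(i)$ \emph{pathwise}, giving the claimed identity a.s. For (6), under $b_{ji}>\bigvee_k b_{jk}b_{ki}/b_{kk}$ with $\de(j)\cap\an(i)\neq\emptyset$, I would exhibit two positive-probability events: on one, all noise variables are close to $1$ so the edge $j\to i$ (whose deterministic weight strictly dominates) still wins, giving strict inequality; on the other, the noise along some factored path through $k\in\de(j)\cap\an(i)$ is large enough — using $\varepsilon$ unbounded above — that it overtakes the edge, and then by continuity of the innovations and noise, the event that it \emph{exactly ties} the edge has positive probability. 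Concretely, fixing $k$, the equality $\bar b_{ji}=\bar b_{jk}\bar b_{ki}/\bar b_{kk}$ holds on an event defined by finitely many inequalities among continuous variables with nonempty interior, hence of positive probability; producing that nonempty interior explicitly (choosing the noise on the $j\to k$ and $k\to i$ sub-paths large, the noise off those paths moderate, and invoking atom-freeness so the tie-set is genuinely reached rather than merely approached) is the step I expect to take the most care.
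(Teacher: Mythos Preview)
Your overall architecture for parts (a)--(c) and (f) matches the paper's proof and is sound. Two points deserve correction.

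\medskip
\textbf{Part (e): you abandoned the right argument because of a direction error.} Your first instinct was correct: compare the edge's random weight $\bar d_{ji}([j\to i])=c_{ji}\varepsilon_j\varepsilon_i$ to the random weight of the longer non-random critical path $p^\ast$ (which exists by hypothesis), namely $\bar d_{ji}(p^\ast)=b_{ji}\,\varepsilon_j\varepsilon_i\prod_{k\in S_{p^\ast}\setminus\{i\}}\varepsilon_k$. Both carry the common factor $\varepsilon_j\varepsilon_i$; after cancelling it you are comparing $c_{ji}$ with $b_{ji}\prod_{k}\varepsilon_k$. Since $c_{ji}\le b_{ji}$ and every $\varepsilon_k\ge 1$, the longer path \emph{always} dominates the edge --- deterministically, not merely almost surely. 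There is no ``fixed ratio'' that the extra noise must exceed; the inequality $\prod_k\varepsilon_k\ge c_{ji}/b_{ji}$ is automatic because the right side is $\le 1$. This is exactly the paper's argument (phrased there as a contradiction). Your fallback --- ``removing the edge does not change $\bU$ in distribution, so $\bar b_{ji}$ equals the max over factored paths pathwise'' --- is a non-sequitur: $\bar b_{ji}$ is a pathwise functional of $(\bC,\bvarepsilon)$ on the original DAG, and a distributional statement about $\bU$ cannot by itself force a pathwise identity for $\bar\bB$.

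\medskip
\textbf{Part (d), atomlessness: simpler than you fear.} The paper's argument is a one-liner you may have overlooked: for $i\neq j$, the ratio $U_i/U_j$ always contains either the factor $\varepsilon_i$ (in the numerator, via $U_i=\tilde U_i\varepsilon_i$) or $\varepsilon_j$ (in the denominator), and each of these is atom-free and independent of every other variable appearing. No case split on the maximizing index in \eqref{noisezdef} is needed.

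\medskip
A minor wording issue in (f): your phrase ``the event that it exactly ties the edge has positive probability'' is misleading --- exact ties between distinct random path weights have probability zero by continuity. What you need (and what your next sentence correctly states) is that the event $\{\bar b_{ji}=\bar b_{jk}\bar b_{ki}/\bar b_{kk}\}$ is an \emph{inequality} region (the random critical path passes through $k$), and that region has nonempty interior, hence positive probability.
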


Lemma~\ref{lemma1} b) and f) motivate the following definition.

\begin{definition}\label{def:RanMinDag}
    Let $\bU \in \mathbb R^d_+$  be a recursive ML vector with propagating noise on the DAG $\mathcal D=(V,E)$ as defined in \eqref{1stdef_1}. Then we define the \emph{minimum ML DAG $\mathcal D^{*B}$ of $\bU$} as
    \begin{align*}
        \mathcal D^{*B}=(V,E^{*B}):=\left(V,\bigg\{(j,i) \in E: \mathbb P \Big(\bar{b}_{ji}>\bigvee\limits_{k \in \de(j) \cap \pa(i)}\frac{\bar{b}_{jk}\bar{b}_{ki}}{\bar{b}_{kk}}\Big)>0  \bigg\}\right).
    \end{align*}
\end{definition} 

{In addition, applying  first Lemma~\ref{lemma1} e) and f), an in the second part Lemma~\ref{lemma1} b) yields the following result.}

\begin{corollary}\label{minD}
    Let $\bX \in \mathbb R^d_+$ be a recursive ML vector on a DAG $\mathcal D=(V,E)$ as defined in \eqref{olddef} and $\bU \in \mathbb R^d_+$ be a recursive ML vector with propagating noise as defined in \eqref{1stdef_1} on the same DAG $\mathcal D$ with the same edge weight matrix $\bC$. Then
    \begin{align*}
        \mathcal D^B = \mathcal D^{*B}.
    \end{align*}
    {Moreover, the minimum ML DAG $\mathcal D^B$ is the smallest DAG that preserves the distribution of $\bX$ and of $\bU$.}
\end{corollary}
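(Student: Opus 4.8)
I would prove this in two stages: first the identity $\mathcal{D}^B=\mathcal{D}^{*B}$, then the minimality claim, which is where the work lies.

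For the identity I would compare $E^B$ and $E^{*B}$ edge by edge. A preliminary reduction is convenient: for $(j,i)\in E$ one may replace $\de(j)\cap\pa(i)$ by $\de(j)\cap\an(i)$ in the defining maxima of both $E^B$ and $E^{*B}$ without changing their values. Indeed, if $k\in\de(j)\cap\an(i)$ is not a parent of $i$, a (random) critical path from $k$ to $i$ has a penultimate node $k'\in\de(j)\cap\pa(i)$, and the concatenation bounds for $\bB$, respectively for $\bar{\bB}$ via Lemma~\ref{lemma1}~a)--b), give $b_{jk}b_{ki}/b_{kk}\le b_{jk'}b_{k'i}/b_{k'k'}$ and its a.s. analogue for $\bar{\bB}$; in particular $\de(j)\cap\pa(i)=\emptyset$ exactly when $\de(j)\cap\an(i)=\emptyset$. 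With this in hand, fix $(j,i)\in E$. If $\de(j)\cap\an(i)=\emptyset$ both right-hand maxima vanish, while $b_{ji}\ge c_{ji}>0$ and $\bar{b}_{ji}\ge\varepsilon_j c_{ji}\varepsilon_i>0$ a.s., so $(j,i)$ lies in both edge sets. Otherwise $b_{ji}\ge\bigvee_{k\in\de(j)\cap\an(i)}b_{jk}b_{ki}/b_{kk}$ always holds (concatenate non-random critical paths), and in the case of strict inequality Lemma~\ref{lemma1}~f) gives $\mathbb{P}(\bar{b}_{ji}>\bigvee_k\bar{b}_{jk}\bar{b}_{ki}/\bar{b}_{kk})>0$, hence $(j,i)\in E^{*B}$, whereas in the case of equality Lemma~\ref{lemma1}~e) gives $\bar{b}_{ji}=\bigvee_k\bar{b}_{jk}\bar{b}_{ki}/\bar{b}_{kk}$ a.s., hence $(j,i)\notin E^{*B}$. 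Together with the preliminary reduction this yields $(j,i)\in E^{*B}\Leftrightarrow(j,i)\in E^B$, i.e. $\mathcal{D}^B=\mathcal{D}^{*B}$.

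For the minimality claim, it is already known (see \cite{nadine1}, Theorems~5.3--5.4, and \cite{nadine2}, Theorem~2) that $\mathcal{D}^B$ is the smallest sub-DAG of $\mathcal{D}$ on which $\bX$ admits a representation \eqref{olddef} with the same distribution, so it suffices to establish the analogue for $\bU$. By Corollary~\ref{solutioncor} the law of $\bU$ is determined by the joint law of $\bZ$ and the random ML coefficient matrix $\bar{\bB}$, and $\bZ$ is unaffected by edge deletions, so I would reduce to: (a) deleting from $\mathcal{D}$ every edge outside $E^B=E^{*B}$ leaves $\bar{\bB}$ unchanged almost surely; and (b) deleting any single edge of $E^B$ changes the law of $\bU$. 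For (a), the key observation is that a.s. every random critical path uses only edges of $E^{*B}$: a sub-path of a random critical path is again random critical, so if a random critical path used an edge $(u,v)\notin E^{*B}$ then $[u\to v]$ would be the (unique, by the Remark after Definition~\ref{def:critical}) random critical path from $u$ to $v$, contradicting that $(u,v)\notin E^{*B}$ forces, via Lemma~\ref{lemma1}~a)--b), an a.s. random critical path from $u$ to $v$ through an intermediate node. Now take a path $p=[j'\to\cdots\to j]\cdot[j\to i]\cdot[i\to\cdots\to i']$ in $\mathcal{D}$ using an edge $(j,i)\notin E^B$, and replace the middle edge by the random critical path $q^*$ from $j$ to $i$; by the observation $q^*$ avoids $(j,i)$, is node-disjoint from the two outer segments (its interior lies in $\de(j)\cap\an(i)$), uses only $E^B$-edges, and has $\bar d_{ji}(q^*)=\bar b_{ji}\ge\varepsilon_j c_{ji}\varepsilon_i=\bar d_{ji}([j\to i])$, so the resulting path $p'$ is simple, has strictly fewer edges outside $E^B$ than $p$, and satisfies $\bar d_{j'i'}(p')\ge\bar d_{j'i'}(p)$. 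Inducting on the number of edges outside $E^B$ and taking maxima over paths gives that $\bar b_{j'i'}$ computed on $\mathcal{D}$ equals $\bar b_{j'i'}$ computed on $\mathcal{D}^B$ for all $j',i'$, almost surely. For (b), if $(j,i)\in E^B=E^{*B}$ then with positive probability $\bar b_{ji}>\bigvee_{k\in\de(j)\cap\pa(i)}\bar b_{jk}\bar b_{ki}/\bar b_{kk}$; on that event Lemma~\ref{lemma1}~b) rules out a random critical path from $j$ to $i$ through any intermediate node, so the random critical path is $[j\to i]$ and $\bar b_{ji}=\bar d_{ji}([j\to i])$, which strictly decreases upon deleting $(j,i)$. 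Intersecting with the positive-probability event that $Z_j$ dominates all $\bar b_{j'i}Z_{j'}$, $j'\in\An(i)\setminus\{j\}$ (possible since the innovations are unbounded and independent of the noise), $U_i$ is strictly decreased on a set of positive probability, and a routine coupling argument then shows that the law of $U_i$, hence of $\bU$, genuinely changes. Combining (a), (b), the first part, and the known statement for $\bX$ shows that $\mathcal{D}^B$ is the smallest DAG preserving the distribution of both $\bX$ and $\bU$.

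The main obstacle is part (a): one must verify not merely that $\bar b_{ji}$ is unchanged when the dominated edge $j\to i$ is deleted, but that \emph{every} coefficient $\bar b_{j'i'}$ is, which is precisely what forces the rerouting-and-induction argument together with careful bookkeeping of the $\varepsilon$-factors in the random path weights \eqref{randompathweight}; the fact that random critical paths avoid non-$E^{*B}$-edges is what makes the induction terminate. The $\pa$-versus-$\an$ reduction in the first part, although elementary, must also be carried out with the same care, since the DAG definitions are phrased with $\pa$ while Lemma~\ref{lemma1}~e)--f) is phrased with $\an$.
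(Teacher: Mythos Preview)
Your proof is correct and matches the paper's approach: the paper's proof is the single sentence ``applying first Lemma~\ref{lemma1}~e) and f), and in the second part Lemma~\ref{lemma1}~b),'' and you have correctly unpacked both citations, including the $\pa$-versus-$\an$ reduction that the paper leaves implicit and the rerouting induction that makes the appeal to part~b) rigorous. One streamlining of your part~(b): instead of the coupling argument, invoke Lemma~\ref{lemma1}~d)---deleting an edge $(j,i)\in E^B$ strictly lowers the left endpoint $b_{ji}$ of $\supp(U_i/U_j)$ (the edge is the unique non-random critical path by the very definition of $E^B$), so the support, and hence the law of $\bU$, changes; this sidesteps having to track all $\bar b_{j'i}$ after deletion.
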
 

Therefore, we will henceforth only use the term $\mathcal D^B$.

\begin{lemma}\label{lemma:crit_rand_path}
Let $\bU \in \mathbb R^d_+$  be a recursive ML vector with propagating noise on a DAG $\mathcal D$ as defined in \eqref{1stdef_1}.
Then the following assertions hold:
\begin{enumerate}
    \item 
    A path $p=[j=k_0 \to \ldots \to k_n=i]$ in $\D$ is a  possible critical path realization from $j$ to $i$
    if and only if all edges of $p$ belong to the minimum ML DAG $\D^B$. 
   \item 
   Let $p_1$ and $p_2$ be two possible critical path realizations from $j$ to $i$ and from $l$ to $m$, respectively.  Then 
        \begin{align}\label{jointprobability}
       \Big\{U_i=  U_j  d_{ji}(p_1) \prod_{k \in S_{p_1}} \varepsilon_k, U_m= U_l d_{lm}(p_2) \prod_{k \in S_{p_2}}\varepsilon_k \Big\}
        \end{align}
        has positive probability if and only if $S_{p_1} \cap S_{p_2} = \emptyset$, or for every $r \in S_{p_1} \cap S_{p_2}$ the sub-path of $p_1$ from $j$ to $r$  is a sub-path of $p_2$ or the  sub-path  of $p_2$ from $l$ to $r$ is a sub-path of $p_1$.
\end{enumerate}
\end{lemma}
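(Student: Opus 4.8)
\textbf{Proof plan for Lemma~\ref{lemma:crit_rand_path}.}

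For part (a), the plan is to argue both implications by relating a possible critical path realization to identifiability of edges. For the ``only if'' direction, suppose $p=[j=k_0\to\cdots\to k_n=i]$ is a possible critical path realization but some edge $(k_{l},k_{l+1})$ of $p$ is not in $\D^B$. By Definition~\ref{def:RanMinDag} (applied via Corollary~\ref{minD}, so $\D^B=\D^{*B}$), this means $\P\big(\bar b_{k_lk_{l+1}}>\bigvee_{r\in\de(k_l)\cap\pa(k_{l+1})}\bar b_{k_lr}\bar b_{rk_{l+1}}/\bar b_{rr}\big)=0$, i.e. there is almost surely a strictly shorter random critical path realizing $\bar b_{k_lk_{l+1}}$ through some intermediate descendant. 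I would then show that replacing the edge $(k_l,k_{l+1})$ along $p$ by such a detour produces, with probability one on the event in \eqref{jointprobability}, a path $p'$ from $j$ to $i$ with $\tilde U_j\bar d_{ji}(p')\ge \tilde U_j\bar d_{ji}(p)=U_i$, contradicting that the event $\{U_i=U_j d_{ji}(p)\prod_{k\in S_p}\varepsilon_k\}$ has positive probability (because on that event $p$ must in fact be the random critical path, hence by the Remark after Definition~\ref{def:critical} the \emph{unique} one realizing $\bar b_{ji}$, but $p'\ne p$). Conversely, if all edges of $p$ lie in $\D^B$, then each edge $(k_l,k_{l+1})$ satisfies the strict-inequality event with positive probability; using independence/continuity of the $\varepsilon$'s and $Z$'s, I would intersect these events (after localizing the innovations so that $Z_j$ dominates, i.e. $U_j=\tilde U_j=Z_j$ and all other contributions to $U_i$ are smaller) to get a positive-probability event on which $p$ is the random critical path and $U_i=\tilde U_j\bar d_{ji}(p)$.

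For part (b), the approach is similar but now tracks two paths simultaneously. On the event in \eqref{jointprobability}, $p_1$ is the random critical path from $j$ to $i$ and $p_2$ the random critical path from $l$ to $m$, and moreover these two critical path events are compatible with the realized innovations. If $S_{p_1}\cap S_{p_2}=\emptyset$, the noise variables along the two paths are disjoint, so after localizing $Z_j,Z_l$ to dominate and all competitors to be small (using continuity), the two events are built from independent blocks of $\varepsilon$'s and have positive joint probability. If $r\in S_{p_1}\cap S_{p_2}$, then the key observation is: on \eqref{jointprobability} we have $U_r=\tilde U_j\bar d_{jr}(p_1)=\tilde U_l\bar d_{lr}(p_2)$ only if the realized critical path into $r$ is simultaneously consistent with both. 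Since by part (a) and the uniqueness remark the random critical path into $r$ is unique, the sub-path of $p_1$ from $j$ to $r$ and the sub-path of $p_2$ from $l$ to $r$ cannot \emph{both} deviate from being a common continuation; formally one must be an initial segment of the other. I would prove the ``only if'' by contradiction: if for some shared $r$ neither sub-path is contained in the other, then the two events force two distinct random critical paths into $r$, which has probability zero by continuity of the noise/innovations. For the ``if'' direction, I would construct the witnessing event: localize $Z_j, Z_l$ to be large, all other innovations small, then the constraint set on the $\varepsilon$'s forcing both $p_1$ and $p_2$ critical reduces (because of the nesting condition on shared sub-paths) to a finite system of strict inequalities with nonempty interior, hence positive probability by atom-freeness and unboundedness of $\varepsilon$.

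The main obstacle I anticipate is the ``if'' direction of part (b): carefully setting up the localization of the innovation vector $\bZ$ and then verifying that the resulting constraints on the noise vector $\bvarepsilon$ along $S_{p_1}\cup S_{p_2}$ are jointly satisfiable with positive probability. The subtlety is that a shared node $r$ contributes the \emph{same} noise variable $\varepsilon_r$ to both random path weights $\bar d_{ji}(p_1)$ and $\bar d_{lm}(p_2)$, so the two ``critical'' requirements are not independent; the nesting hypothesis on shared sub-paths is exactly what guarantees that the coupled inequalities do not contradict each other (e.g. do not simultaneously demand $\varepsilon_r$ large and small). I would handle this by ordering the shared nodes along the common sub-paths and checking the inequalities inductively, using at each step that one more strict inequality on a fresh noise variable can always be accommodated by the continuity and unboundedness of $\varepsilon$. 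The ``only if'' directions and all of part (a) should be comparatively routine consequences of Definition~\ref{def:RanMinDag}, Corollary~\ref{minD}, and the uniqueness of random critical paths noted in the Remark.
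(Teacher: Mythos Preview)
Your plan for part (a) and for the ``only if'' direction of part (b) is essentially the paper's argument. For (a) the paper invokes Lemma~\ref{lemma1}(e) to see that an edge $k_l\to k_{l+1}$ not in $\D^B$ satisfies $\P(\bar b_{k_lk_{l+1}}=c_{k_lk_{l+1}}\varepsilon_{k_l}\varepsilon_{k_{l+1}})=0$, so $p$ can a.s.\ be improved by a detour; the converse is dismissed as ``the same argument'' (implicitly Lemma~\ref{lemma1}(f)). For the ``only if'' of (b) the paper makes your uniqueness idea concrete: if some $r\in S_{p_1}\cap S_{p_2}$ has distinct predecessors $s$ on $p_1$ and $t$ on $p_2$, then on \eqref{jointprobability} one derives $U_r=U_s c_{sr}\varepsilon_r=U_t c_{tr}\varepsilon_r$, hence $U_s c_{sr}=U_t c_{tr}$; expanding via \eqref{noisezdef} and using that (after a WLOG on the orientation of $s,t$) the variable $\varepsilon_t$ appears on exactly one side gives a null set.

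The real divergence is in the ``if'' direction of (b), and there your localization scheme has a gap you have not quite put your finger on. In the nested case (say the sub-path of $p_2$ from $l$ to each shared $r$ lies inside $p_1$), the starting node $l$ of $p_2$ is itself an interior node of $p_1$. On the event \eqref{jointprobability} this forces $U_l=\tilde U_j\,\bar d_{jl}(\cdot)$, i.e.\ $U_l$ must be realized through $j$, so you \emph{cannot} make $Z_l$ large without killing the first event. The inductive bookkeeping you propose on the $\varepsilon$'s does not address this; it is the $Z$-localization that needs to change. The paper avoids the issue entirely: taking $r$ to be the last common node, it rewrites \eqref{jointprobability} as
\[
\Big\{U_r= U_j d_{jr}(p')\!\!\prod_{k\in S_{p'}}\!\varepsilon_k\Big\}\cap\Big\{U_i= U_r d_{ri}(p'')\!\!\prod_{k\in S_{p''}}\!\varepsilon_k\Big\}\cap\Big\{U_m= U_r d_{rm}(p''')\!\!\prod_{k\in S_{p'''}}\!\varepsilon_k\Big\}
\]
with $p'=[j\to\cdots\to l\to\cdots\to r]$, $p''=[r\to\cdots\to i]$, $p'''=[r\to\cdots\to m]$, and observes that $S_{p'},S_{p''},S_{p'''}$ are pairwise disjoint (by acyclicity and the choice of $r$), whence positive probability is immediate. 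Your route can be salvaged by localizing only the upstream innovation $Z_j$, but the paper's three-path decomposition is both shorter and sidesteps the obstacle you flagged.
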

We illustrate part b) with Figure~\ref{fig:posscrit1} and Figure~\ref{fig:posscrit2}.

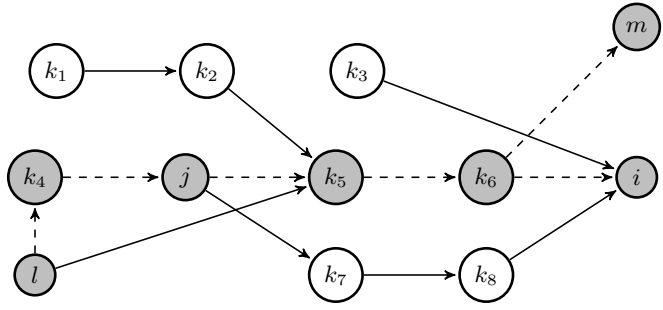
\begin{figure}[ht]
\begin{center}
\begin{tikzpicture}[->,>=stealth',shorten >=1pt,auto,node distance=2cm,semithick]
  \tikzstyle{every node}=[circle,line width =1pt,font=\footnotesize]
  \node (a2) [draw,fill=gray!50] {$j$};  
  \node (a6) [right of = a2,draw,fill=gray!50] {$k_5$};
  \node (a7) [right of = a6,draw,fill=gray!50] {$k_6$};
  \node (a3) [right of=a7,draw,fill=gray!50] {$i$};
  \node (a9) [left of=a2,draw,,fill=gray!50] {$k_4$};
  \node (a11) [below of=a9, yshift=0.7cm, draw,fill=gray!50] {$l$};
  \node (a13) [above of=a3,draw,fill=gray!50] {$m$};
 
  \node (a1) [below of=a6, yshift=0.7cm, draw] {$k_7$};
  \node (a4) [above left of=a2,xshift=1.7cm,draw] {$k_2$};
  \node (a5) [right of=a4,draw] {$k_3$};
  \node (a8) [below of=a7,yshift=0.7cm, draw] {$k_8$};
  \node (a10) [left of=a4,draw] {$k_1$};
    
  \path (a2) edge [dashed](a6);
  \path (a6) edge [dashed](a7);
  \path (a7) edge [dashed](a3);
  \path (a11) edge [dashed](a9);
  \path (a9) edge [dashed](a2);
  \path (a7) edge [dashed](a13);
 
  \path (a4) edge [color=black](a6);
  \path (a5) edge [color=black](a3);
  \path (a1) edge [color=black](a8);
  \path (a2) edge [color=black](a1);
  \path (a8) edge [color=black](a3);
  \path (a10) edge [color=black](a4);
  \path (a11) edge [color=black](a6);

\end{tikzpicture}
\end{center}
\caption{
Both dashed paths $p_1:=[j \to k_5 \to k_6 \to i]$ and $p_2:=[l \to k_4 \to j \to  k_5 \to k_6 \to m]$ can be possible critical path realizations from the same realized noise variables along the nodes.
}\label{fig:posscrit1}
\end{figure}

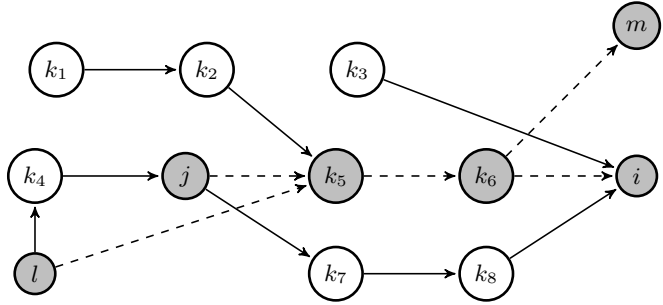
\begin{figure}[ht]
\begin{center}
\begin{tikzpicture}[->,>=stealth',shorten >=1pt,auto,node distance=2cm,semithick]
  \tikzstyle{every node}=[circle,line width =1pt,font=\footnotesize]
  \node (a2) [draw,fill=gray!50] {$j$};  
  \node (a6) [right of = a2,draw,fill=gray!50] {$k_5$};
  \node (a7) [right of = a6,draw,fill=gray!50] {$k_6$};
  \node (a3) [right of=a7,draw,fill=gray!50] {$i$};
  \node (a9) [left of=a2,draw] {$k_4$};
  \node (a11) [below of=a9, yshift=0.7cm, draw,fill=gray!50] {$l$};
  \node (a13) [above of=a3,draw,fill=gray!50] {$m$};
 
  \node (a1) [below of=a6, yshift=0.7cm, draw] {$k_7$};
  \node (a4) [above left of=a2,xshift=1.7cm,draw] {$k_2$};
  \node (a5) [right of=a4,draw] {$k_3$};
  \node (a8) [below of=a7,yshift=0.7cm, draw] {$k_8$};
  \node (a10) [left of=a4,draw] {$k_1$};

  \path (a6) edge [dashed](a7);
  \path (a7) edge [dashed](a3);
  \path (a2) edge [dashed](a6);
  \path (a9) edge [color=black](a2);
  \path (a7) edge [dashed](a13);
  \path (a11) edge [dashed](a6);
  \path (a11) edge [color=black](a9);
 
  \path (a4) edge [color=black](a6);
  \path (a5) edge [color=black](a3);
  \path (a1) edge [color=black](a8);
  \path (a2) edge [color=black](a1);
  \path (a8) edge [color=black](a3);
  \path (a10) edge [color=black](a4);

\end{tikzpicture}
\end{center}
\caption{ 
Both dashed paths $p_1:=[j \to k_5 \to k_6 \to i]$ and $p_2:=[l \to  k_5 \to k_6 \to m]$ can only  on a null-set be possible critical path realizations from the same realized noise variables along the nodes.
}\label{fig:posscrit2}
\end{figure}

We conclude this section with an important result that not only helps us to understand the model better, but is also an important step for learning the model.

\begin{theorem} \label{epscount} 
    Let $\bU\in\R^d_+$ be a recursive ML vector with propagating noise on a DAG $\mathcal D$ as defined in \eqref{1stdef_1}. 
    Suppose that $p_{\max}:=[j=k_0 \to \dots \to k_n=i]$ is generic.
    Let $S_{p_{\max}}=\{k_1,\dots,k_n\}$ be the set of nodes on $p_{\max}$. 
    Then
    \begin{align*}
       \mathbb P \bigg(\frac{U_i}{U_j} \leq  b_{ji}x\bigg)
       &\sim \mathbb P\Big(\prod\limits_{k\in S_{p_{\max}}} \varepsilon_{k} \leq x,\frac{U_i}{U_j} = b_{ji}\prod\limits_{k\in S_{p_{\max}}} \varepsilon_{k}\Big) \, \sim   \, c \, \mathbb P\bigg(\prod\limits_{k\in S_{p_{\max}}} \varepsilon_{k} \leq x \bigg),\quad x\downarrow 1,
    \end{align*}
    for some constant $c \in (0,1)$.
\end{theorem}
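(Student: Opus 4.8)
Write $W:=\prod_{k\in S_{p_{\max}}}\varepsilon_k$ and $A:=\{U_i/U_j=b_{ji}W\}$. Since $p_{\max}$ contributes $\bar d_{ji}(p_{\max})=b_{ji}\varepsilon_j\prod_{k\in S_{p_{\max}}}\varepsilon_k=b_{ji}\varepsilon_jW$ to $\bar b_{ji}$, Corollary~\ref{cor:3rddef} and $U_j=\varepsilon_j\tilde U_j$ give the deterministic bound $U_i\ge\bar b_{ji}\tilde U_j\ge b_{ji}WU_j$, i.e.\ $U_i/U_j\ge b_{ji}W$ always (this is Lemma~\ref{lemma1}(c) made explicit along $p_{\max}$). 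Hence $\{U_i/U_j\le b_{ji}x\}\subseteq\{W\le x\}$, on $A\cap\{W\le x\}$ one has $U_i/U_j=b_{ji}W\le b_{ji}x$, and $\{U_i/U_j\le b_{ji}x\}$ is the disjoint union of $A\cap\{W\le x\}$ and $A^{c}\cap\{W\le x\}\cap\{U_i/U_j\le b_{ji}x\}$. Since moreover the middle expression of the claim equals $\mathbb P(A\cap\{W\le x\})$, it suffices to prove, for some $c\in(0,1)$ and as $x\downarrow1$,
\[
\mathbb P\big(A\cap\{W\le x\}\big)\sim c\,\mathbb P(W\le x),\qquad\mathbb P\big(A^{c}\cap\{W\le x\}\cap\{U_i/U_j\le b_{ji}x\}\big)=o\big(\mathbb P(W\le x)\big)
\]
(for $x$ above the left endpoint of $\mathrm{supp}(W)$, normalised to $1$, both normalisers are $>0$ and decrease to $0$).

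Split the noise into the on-path block $\boldsymbol\varepsilon^{(p)}:=(\varepsilon_k)_{k\in S_{p_{\max}}}$ and the rest $Y:=\big((\varepsilon_k)_{k\notin S_{p_{\max}}},\bZ\big)$, which is independent of $\boldsymbol\varepsilon^{(p)}$; note that $U_j$, $\tilde U_j$ are functions of $Y$ alone, since $S_{p_{\max}}$ consists of strict descendants of $j$. Resolving the structural equations along $p_{\max}=[j=k_0\to\cdots\to k_n=i]$, one checks that $U_i/U_j=b_{ji}W$ if and only if each edge $k_l\to k_{l+1}$ attains the maximum defining $\tilde U_{k_{l+1}}$, i.e.\
\[
A=\bigcap_{l=0}^{n-1}A_l,\qquad A_l:=\Big\{c_{k_lk_{l+1}}U_{k_l}\ge c_{mk_{l+1}}U_m\ \ \forall m\in\pa(k_{l+1})\setminus\{k_l\},\ \ c_{k_lk_{l+1}}U_{k_l}\ge Z_{k_{l+1}}\Big\},
\]
a finite conjunction of inequalities whose two sides are continuous in $(\boldsymbol\varepsilon^{(p)},Y)$. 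Let $A_v$ be the section of $A$ at $\boldsymbol\varepsilon^{(p)}=v$ and $g(v):=\mathbb P(A_v)$, a version of $v\mapsto\mathbb P(A\mid\boldsymbol\varepsilon^{(p)}=v)$ defined for every $v\in[1,\infty)^{|S_{p_{\max}}|}$. The crux is that $g$ is continuous at $\mathbf 1:=(1,\dots,1)$: it is enough that each defining inequality, evaluated at $\boldsymbol\varepsilon^{(p)}=\mathbf 1$, holds almost surely with strict sign, i.e.\ that $c_{k_lk_{l+1}}U_{k_l}=c_{mk_{l+1}}U_m$ and $c_{k_lk_{l+1}}U_{k_l}=Z_{k_{l+1}}$ are $\mathbb P$-null there, for then $\mathbf 1_{A_v}\to\mathbf 1_{A_{\mathbf 1}}$ a.s.\ as $v\to\mathbf 1$ and dominated convergence yields $g(v)\to g(\mathbf 1)$. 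Granting this and setting $c:=g(\mathbf 1)$: since $\{W\le x\}\subseteq\{\boldsymbol\varepsilon^{(p)}\in[1,x]^{|S_{p_{\max}}|}\}$ and $[1,x]^{|S_{p_{\max}}|}$ shrinks to $\{\mathbf 1\}$ as $x\downarrow1$,
\[
\mathbb P(A\mid W\le x)=\mathbb E\big[g(\boldsymbol\varepsilon^{(p)})\,\big|\,W\le x\big]\ \longrightarrow\ g(\mathbf 1)=c,
\]
which is the first relation. For the second, let $U_i^{\circ}$ be $U_i$ with $\boldsymbol\varepsilon^{(p)}$ frozen to $\mathbf 1$; then $U_i\ge U_i^{\circ}$ (the structural equations are coordinatewise nondecreasing in the noise and $\varepsilon_k\ge1$), $U_j$ is unaffected, and $A_{\mathbf 1}^{c}=\{U_i^{\circ}/U_j>b_{ji}\}$, so on $A^{c}\cap\{W\le x\}\cap\{U_i/U_j\le b_{ji}x\}$ one has $b_{ji}<U_i^{\circ}/U_j\le b_{ji}x$; as $U_i^{\circ}/U_j$ is a function of $Y$, hence independent of $W$, this event has probability at most $\mathbb P(W\le x)\,\mathbb P\big(U_i^{\circ}/U_j\in(b_{ji},b_{ji}x]\big)=o(\mathbb P(W\le x))$.

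It remains to see $c=g(\mathbf 1)\in(0,1)$. In $A_{\mathbf 1}$ the clause $c_{k_{n-1}k_n}U_{k_{n-1}}\ge Z_i$ compares the a.s.\ finite $U_{k_{n-1}}$ (a function of $Y$ not involving $Z_i$, as $i\notin\An(k_{n-1})$) with the independent unbounded $Z_i$; since $\mathbb P\big(Z_i>c_{k_{n-1}k_n}U_{k_{n-1}}\mid\boldsymbol\varepsilon^{(p)}=\mathbf 1\big)>0$, we get $c<1$. Conversely, conditionally on $\boldsymbol\varepsilon^{(p)}=\mathbf 1$ the positive-probability event $\{Z_j>t\}\cap\bigcap_{k\ne j}\{Z_k<1\}\cap\bigcap_{k\notin S_{p_{\max}}}\{\varepsilon_k<1+\rho\}$ lies in $\bigcap_l A_l=A$ once $t$ is large and $\rho$ small: a downstream induction along $p_{\max}$ shows each $U_{k_l}$ is then realised through $j$ with weight $b_{jk_l}\varepsilon_j$, using that every edge of $p_{\max}$ strictly beats its two-step detours (edge-genericity of the generic path $p_{\max}$), so the $j\to k_l$ sub-paths of $p_{\max}$ stay strictly heaviest in the frozen model; hence $c>0$. (Consistently, $p_{\max}$ generic forces all its edges into $\mathcal D^B$, so Lemma~\ref{lemma:crit_rand_path}(a) already gives $\mathbb P(A)>0$.)

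The step I expect to be the real obstacle is the null-set claim underlying the continuity of $g$ at $\mathbf 1$, in particular ruling out the ties $c_{k_lk_{l+1}}U_{k_l}=c_{mk_{l+1}}U_m$ for a competing parent $m$ of $k_{l+1}$ in the frozen model. This separates into a deterministic coincidence of path weights, excluded by the genericity of the edges of $p_{\max}$, and a genuinely probabilistic tie, excluded by the atom-freeness of the innovations and the noise — the same mechanism that makes $U_i/U_j$ atom-free in Lemma~\ref{lemma1}(d); because the frozen model is not itself a propagating-noise model, this no-atom statement has to be re-derived through the max-plus recursion rather than quoted.
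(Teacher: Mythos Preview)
Your approach—condition on the on-path noise block $\boldsymbol\varepsilon^{(p)}$ and use continuity of the section probability $g(v)=\mathbb P(A_v)$ at $v=\mathbf 1$—is genuinely different from the paper's. The paper decomposes $\{U_i/U_j\le b_{ji}x\}$ directly according to (i) whether $U_i=\tilde U_j\bar b_{ji}$ and (ii) which path in $P_{ji}$ is random critical, and then sandwiches each piece by explicit inequalities in the random path weights; this is longer and more computational, but fully constructive and exhibits the constant as the probability of a concrete event. Your argument is shorter and more structural, and your constant $c=g(\mathbf 1)=\mathbb P(U_i^{\circ}/U_j=b_{ji})$ is the same object.

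There is, however, a real gap in your remainder estimate. You write ``$A_{\mathbf 1}^{c}=\{U_i^{\circ}/U_j>b_{ji}\}$, so on $A^{c}\cap\{W\le x\}\cap\{U_i/U_j\le b_{ji}x\}$ one has $b_{ji}<U_i^{\circ}/U_j$''. The upper bound $U_i^{\circ}/U_j\le b_{ji}x$ indeed follows from $U_i\ge U_i^{\circ}$, but the lower bound does not follow from anything you have said: you are silently using $A^{c}\subseteq\{Y\in A_{\mathbf 1}^{c}\}$, equivalently the monotonicity $A_{\mathbf 1}\subseteq A_v$ for every $v\ge\mathbf 1$. This is \emph{not} a consequence of ``the structural equations are coordinatewise nondecreasing in the noise'', because both sides of each clause $c_{k_lk_{l+1}}U_{k_l}\ge c_{mk_{l+1}}U_m$ increase with $\boldsymbol\varepsilon^{(p)}$. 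The claim is nevertheless true, and the reason is an acyclicity fact you have not invoked: any competing parent $m\in\pa(k_{l+1})\setminus\{k_l\}$ satisfies $\An(m)\cap S_{p_{\max}}\subseteq\{k_1,\dots,k_l\}$, so passing from $\mathbf 1$ to $v$ multiplies $U_{k_l}$ by exactly $\prod_{r\le l}v_{k_r}$ (under the inductive hypothesis that $A_0,\dots,A_{l-1}$ hold) while it multiplies each $U_m$ by at most that factor; an induction along $p_{\max}$ then yields $Y\in A_{\mathbf 1}\Rightarrow Y\in A_v$. You need to supply this step. Once it is in place, the no-tie verification you correctly flag as the remaining obstacle goes through: genericity of $p_{\max}$ forces each of its edges to be generic (sub-paths of critical paths are critical, so a non-generic edge would produce a second critical $j\to i$ path), which disposes of the deterministic coincidences, and in every clause at $\boldsymbol\varepsilon^{(p)}=\mathbf 1$ one can isolate an atom-free factor ($Z_{k_{l+1}}$, or some $\varepsilon_m$ with $m\notin S_{p_{\max}}$, or an innovation not shared by both sides) independent of the rest.
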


\begin{remark}
If the distributions of the noise variables and the innovations as well as the path weights of the underlying DAG $\mathcal D$ are given, the constant $c$ in Theorem~\ref{epscount} can be calculated explicitly.
\end{remark}

Theorem~\ref{epscount} also shows that, while any path $p$ from $j$ to $i$ with $d_{ji}(p)<b_{ji}$ contributes to the distribution of $\bU$ (as we have seen in Example~\ref{dag1}), they influence the distribution of $U_i/U_j$ at their left limit of support only by a constant. 

We now extend the result to situations with several critical paths.

\begin{corollary}\label{epscount_prop}
    Let $\bU$ be as in Theorem~\ref{epscount}. Suppose that there are several paths $p_1, \dots, p_n$ from $j$ to $i$ that are critical; i.e., $d_{ji}(p_1)=\ldots=d_{ji}(p_n)=b_{ji}$.  Then
     \begin{align*}
       \mathbb P\Big(\frac{U_i}{U_j} \leq  b_{ji}x\Big)
       \sim c \, \mathbb P\bigg(\bigcap\limits_{p \in \{p_1,\ldots, p_n\}}\Big\{\prod\limits_{k \in  S_{p} } \varepsilon_{k} \leq x \Big\} \bigg),\quad x\downarrow 1,
    \end{align*}
for some constant $c \in (0,1)$.
\end{corollary}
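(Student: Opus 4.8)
The plan is to follow the structure of the proof of Theorem~\ref{epscount}, replacing the single generic path $p_{\max}$ by the family of critical paths — which I take to be \emph{all} critical paths from $j$ to $i$, the single‑path case $n=1$ being exactly Theorem~\ref{epscount} — and calling on Lemma~\ref{lemma:crit_rand_path} where genericity was used there. Write $A_x:=\bigcap_{m=1}^n\{\prod_{k\in S_{p_m}}\varepsilon_k\le x\}$; since $\varepsilon\ge 1$, the events $A_x$ decrease as $x\downarrow1$ to the null event $\{\varepsilon_k=1\text{ for all }k\in\bigcup_m S_{p_m}\}$. The first step is the pointwise lower bound: by Corollary~\ref{cor:3rddef} one has $U_i\ge\bar b_{ji}\tilde U_j$, and combining this with $\tilde U_j=U_j/\varepsilon_j$ and, for each $m$, $\bar b_{ji}\ge\bar d_{ji}(p_m)=b_{ji}\,\varepsilon_j\prod_{k\in S_{p_m}}\varepsilon_k$ (using $d_{ji}(p_m)=b_{ji}$ and \eqref{randompathweight}) gives
\[
\frac{U_i}{U_j}\ \ge\ b_{ji}\bigvee_{m=1}^n\prod_{k\in S_{p_m}}\varepsilon_k\qquad\text{almost surely.}
\]
In particular $\{U_i/U_j\le b_{ji}x\}\subseteq A_x$, so $\mathbb P(U_i/U_j\le b_{ji}x)\le\mathbb P(A_x)$; the content of the statement is that this constant $1$ can be replaced by some $c\in(0,1)$.

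Next I would decompose at the equality event $E:=\{U_i/U_j= b_{ji}\bigvee_m\prod_{k\in S_{p_m}}\varepsilon_k\}$. On $E$ the inclusion of the first step is an equality, so $\{U_i/U_j\le b_{ji}x\}\cap E=A_x\cap E$ and hence
\[
\mathbb P\Big(\frac{U_i}{U_j}\le b_{ji}x\Big)=\mathbb P(A_x\cap E)+\mathbb P\Big(\frac{U_i}{U_j}\le b_{ji}x,\ E^c\Big).
\]
It then suffices to prove (i) $\mathbb P(U_i/U_j\le b_{ji}x,\ E^c)=o(\mathbb P(A_x))$ and (ii) $\mathbb P(A_x\cap E)/\mathbb P(A_x)\to c$ with $0<c<1$; here $\mathbb P(A_x\cap E)$ is the natural analogue of the middle term in the display of Theorem~\ref{epscount}.

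Both (i) and (ii) I would obtain by conditioning on the vector $\bvarepsilon'=(\varepsilon_k)_{k\in\bigcup_m S_{p_m}}$ of noise variables on the critical paths (note $\varepsilon_j\notin\bvarepsilon'$). Given $\bvarepsilon'$ the number $v:=\bigvee_m\prod_{k\in S_{p_m}}\varepsilon_k$ is determined and $U_i/U_j\ge b_{ji}v$ a.s., so $h(\bvarepsilon'):=\mathbb P(E\mid\bvarepsilon')=\mathbb P(U_i/U_j= b_{ji}v\mid\bvarepsilon')$ is the atom that the conditional law of $U_i/U_j$ places at its left endpoint. Using the representation $U_i=\bigvee_{\ell\in\An(i)}\bar b_{\ell i}\tilde U_\ell$, $U_j=\varepsilon_j\tilde U_j$, the fact that the finitely many terms competing with $\bar b_{ji}\tilde U_j$ — those with $\ell\ne j$, together with the contributions of the non‑critical paths from $j$ to $i$ — depend continuously on $\bvarepsilon'$, and that $\varepsilon$ and $Z$ are atom‑free (so no such term is almost surely tied with $\bar b_{ji}\tilde U_j$ at $\bvarepsilon'=\mathbf 1$), one shows that $h$ is continuous at $\bvarepsilon'=\mathbf 1$ and, moreover, that $\mathbb P(U_i/U_j\le b_{ji}x,\ E^c\mid\bvarepsilon')\to0$ as $x\downarrow1$ and $\bvarepsilon'\to\mathbf 1$: on $E^c\cap\{U_i/U_j\le b_{ji}x\}$ one has $b_{ji}v<U_i/U_j\le b_{ji}x$, so the overshoot of $U_i/U_j$ above its almost sure lower bound is carried by a well‑identified alternative contribution, which pins a noise variable with index outside $\bigcup_m S_{p_m}$ — or an innovation — into a set shrinking to a null set, and Lemma~\ref{lemma:crit_rand_path} tells us which such configurations can co‑occur with $\{U_i/U_j\le b_{ji}x\}$ and which lie on a null set. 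Integrating these conditional statements against the conditional law of $\bvarepsilon'$ given $A_x$ — which concentrates at $\mathbf 1$ as $x\downarrow1$, since $A_x\subseteq\{\varepsilon_k\le x\ \forall k\in\bigcup_m S_{p_m}\}$ — yields (i) and (ii) with $c=h(\mathbf 1)=\mathbb P\big(U_i/U_j= b_{ji}\mid\varepsilon_k=1\ \forall k\in\bigcup_m S_{p_m}\big)$, which makes $c$ explicitly computable from the path weights and the laws of $\varepsilon$ and $Z$, in the spirit of the remark after Theorem~\ref{epscount}.

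Finally one checks $0<c<1$. That $c>0$: by Corollary~\ref{minD} the Kleene star $\bB$ is also the ML coefficient matrix of the recursive ML vector on $\mathcal D^B$, so one of $p_1,\dots,p_n$ — say $p_{m_0}$ — lies entirely in $\mathcal D^B$ and is therefore a possible critical path realization (Lemma~\ref{lemma:crit_rand_path} a)); the event that $U_i$ is realized along $p_{m_0}$ with $\bar d_{ji}(p_{m_0})=\bar b_{ji}$ has positive conditional probability given $\bvarepsilon'=\mathbf 1$ (by an argument as for $\mathbb P(X_i/X_j=b_{ji})>0$ in the non‑noisy model), and on it $U_i/U_j= b_{ji}$. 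That $c<1$: some contribution to $U_i$ not carried by the critical paths from $j$ — the innovation $Z_i$, a non‑critical path, or an ancestor of $i$ incomparable with $j$ — strictly exceeds $\bar b_{ji}\tilde U_j$ with positive conditional probability given $\bvarepsilon'=\mathbf 1$ (using that $Z$ is unbounded above), placing us in $E^c$. Substituting (i), (ii) and these bounds into the decomposition of the second step gives $\mathbb P(U_i/U_j\le b_{ji}x)\sim\mathbb P(A_x\cap E)\sim c\,\mathbb P(A_x)$, as claimed. I expect the genuinely delicate part to be the third step — the continuity of $h$ at $\mathbf 1$ and, above all, $\mathbb P(U_i/U_j\le b_{ji}x,\ E^c\mid\bvarepsilon')\to0$ — because these require a careful, Lemma~\ref{lemma:crit_rand_path}-driven enumeration of the finitely many mechanisms by which $U_i/U_j$ can overshoot its almost sure lower bound while remaining below $b_{ji}x$, together with a uniform control of the corresponding vanishing conditional probabilities.
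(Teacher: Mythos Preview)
Your approach is correct and takes a genuinely different route from the paper's. The paper mimics the proof of Theorem~\ref{epscount} directly: it keeps the two-stage split $I=I_1+I_2$ at $\{U_i=\tilde U_j\bar b_{ji}\}$ and then $I_1=\tilde I_{11}+\tilde I_{12}$ according to whether the random critical path is one of $p_1,\dots,p_n$ or a non-critical path, and for $\tilde I_{11}$ it produces explicit sandwich bounds $c_1\,\mathbb P(A_x)\le\tilde I_{11}(x)\le c_2(x)\,\mathbb P(A_x)$ by brute-force factoring out the $\varepsilon_k$ with $k\in\bigcup_m S_{p_m}$ via independence (exactly as in \eqref{I11} and \eqref{upperboundI11}, with $p_{\max}$ replaced by the family $\{p_1,\dots,p_n\}$ and $\{\prod_{k\in S_{p_{\max}}}\varepsilon_k\le x\}$ by $A_x$), and then declares $\tilde I_{12}=o(\tilde I_{11})$, $I_2=o(I_1)$ by the same shrinking-interval arguments as in Theorem~\ref{epscount}. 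Your single split at $E=\{U_i/U_j=b_{ji}\bigvee_m\prod_{k\in S_{p_m}}\varepsilon_k\}$ collapses the paper's two stages into one, and your conditioning on $\bvarepsilon'$ together with the concentration of the law of $\bvarepsilon'$ given $A_x$ at $\mathbf 1$ replaces the explicit sandwich. What you gain is a clean identification of the constant as $c=h(\mathbf 1)=\mathbb P(U_i/U_j=b_{ji}\mid\varepsilon_k=1\ \forall k\in\bigcup_m S_{p_m})$, which is exactly the paper's $c_1$ once you unwind its formula; what the paper gains is that each step is fully explicit, so nothing has to be taken on faith. Your honest flag that the ``third step'' is the delicate one is accurate: making the uniform bound $\sup_{\bvarepsilon'\in[1,x]^N}\mathbb P(E^c,\ U_i/U_j\le b_{ji}x\mid\bvarepsilon')\to 0$ rigorous needs precisely the same shrinking-interval case analysis that the paper carries out, and your invocation of Lemma~\ref{lemma:crit_rand_path} there is not really needed (the paper does not use it for this corollary).
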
 

For simplicity, we assume from now on that $\bC$ is generic in the sense of Definition~\ref{def:critical}.
However, we want to remark that all such results can be extended to the case of several non-random critical paths between two nodes. The proofs of such results work similarly as the proof of Corollary~\ref{epscount_prop}.

We continue with another consequence of Theorem~\ref{epscount}.

\begin{corollary} \label{epscount_cor}
    Let $\bU$ be as in Theorem~\ref{epscount} and suppose that $p:=[j=k_0 \to \dots \to k_n=i]$ is generic. Let $\bU^1,\ldots, \bU^n$ for $n\in\N$ be an iid sample from $\bU$. Then, for the same constant $c\in(0,1)$ as in Theorem~\ref{epscount}, we have
    \begin{align*}
       \mathbb P\Big(\bigwedge\limits_{k=0}^n \frac{U^k_i}{U^k_j} \leq  b_{ji}x\Big)
       \sim c \, n \, \mathbb P\bigg(\prod\limits_{i=1}^{n} \varepsilon_{k_i} \leq x \bigg),\quad x\downarrow 1.
    \end{align*}
\end{corollary}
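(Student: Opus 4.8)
The plan is to deduce the corollary from Theorem~\ref{epscount} by a short inclusion--exclusion (Bonferroni) argument, exploiting that the $n$ sample copies $\bU^1,\dots,\bU^n$ are independent. Since the path $p$ runs from $j$ to $i$ we have $j\in\an(i)$, so Lemma~\ref{lemma1}~d) gives $\supp(U_i/U_j)=[b_{ji},\infty)$; hence for $x<1$ the event on the left is empty and there is nothing to show, so we may assume $x\ge 1$ and let $x\downarrow 1$. Writing $A_k:=\{U_i^k/U_j^k\le b_{ji}x\}$ and $q(x):=\P(U_i/U_j\le b_{ji}x)$, the events $A_1,\dots,A_n$ are i.i.d.\ with $\P(A_k)=q(x)$ (each $\bU^k\eqd\bU$), and
\[
\Big\{\bigwedge_{k=1}^n \frac{U_i^k}{U_j^k}\le b_{ji}x\Big\}=\bigcup_{k=1}^n A_k.
\]

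First I would check that $q(x)\to 0$ as $x\downarrow 1$: the sets $\{U_i/U_j\le b_{ji}x\}$ decrease to $\{U_i/U_j=b_{ji}\}$, which has probability $0$ because by Lemma~\ref{lemma1}~d) the law of $U_i/U_j$ is atom-free; equivalently this follows from Theorem~\ref{epscount} together with $\P\big(\prod_{k\in S_p}\varepsilon_k\le 1\big)=0$, the latter holding since each $\varepsilon_k\ge 1$ is atom-free, so $\P(\varepsilon_k=1)=0$. Then the Bonferroni inequalities (using independence, $\P(A_k\cap A_l)=q(x)^2$) give
\[
n\,q(x)-\binom{n}{2}\,q(x)^2\ \le\ \P\Big(\bigcup_{k=1}^n A_k\Big)\ \le\ n\,q(x),
\]
and dividing by $n\,q(x)>0$ and letting $x\downarrow1$ the lower bound tends to $1$ since $q(x)\to 0$. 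Hence $\P\big(\bigwedge_{k=1}^n U_i^k/U_j^k\le b_{ji}x\big)\sim n\,q(x)$ as $x\downarrow1$. Finally, $p$ being generic, Theorem~\ref{epscount} yields $q(x)\sim c\,\P\big(\prod_{k\in S_p}\varepsilon_k\le x\big)$ with the same constant $c\in(0,1)$, and chaining the two asymptotic equivalences (noting $\prod_{k\in S_p}\varepsilon_k=\prod_{l=1}^n\varepsilon_{k_l}$) proves the claim.

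The argument is essentially routine; the only step requiring a little care is the vanishing of $q(x)$ as $x\downarrow1$, which is exactly what makes the quadratic correction $\binom{n}{2}q(x)^2$ negligible against the leading term $n\,q(x)$, and this in turn rests on the atom-freeness of the noise variables, transferred to the ratio $U_i/U_j$ via Lemma~\ref{lemma1}~d). (One should also keep in mind the index convention: the minimum runs over the $n$ sample copies, while the product on the right runs over the $n=|S_p|$ nodes of the generic path $p$.)
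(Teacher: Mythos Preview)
Your proof is correct and follows essentially the same route as the paper: the paper writes the left-hand side as $1-(1-q(x))^n$ by independence, expands via the binomial theorem, and observes that the terms of order $k\ge 2$ are negligible since $q(x)\to 0$, which is exactly your Bonferroni argument in different packaging. If anything, your version is slightly more explicit in justifying $q(x)\to 0$ via the atom-freeness of $U_i/U_j$ from Lemma~\ref{lemma1}~d), a point the paper's proof uses without comment.
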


We conclude this section by extending Theorem~\ref{epscount} to multivariate distributions. We only formulate and prove the bivariate case, the general case is then obvious.
Recall that in Lemma~\ref{lemma:crit_rand_path} we gave a necessary and sufficient condition for \eqref{biv:posscrit} below.
\begin{theorem} \label{epscountmult}
    Let $\bU\in\R^d_+$ be a recursive ML vector with propagating noise on a DAG $\mathcal D$ as defined in \eqref{1stdef_1}.  Suppose generic paths $p_1$ from $j$ to $i$ and $p_2$  from $l$ to $m$.
    Assume that 
    \begin{align} \label{biv:posscrit}
            \mathbb P \Big(U_i= U_j b_{ji}\prod_{k \in S_{p_1}}\varepsilon_{k}, U_m=U_l b_{lm}\prod_{k \in S_{p_2}}\varepsilon_{k}\Big)>0.
    \end{align}
    Then 
    \begin{align*}
       &\mathbb P\left(\frac{U_i}{U_j} \leq  b_{ji}x_1, \frac{U_m}{U_l} \leq  b_{lm}x_2\right) \sim   \,c \, \mathbb P\Big(\prod_{k \in S_{p_1}} \varepsilon_{k} \leq x_1,\prod_{k \in S_{p_2}} \varepsilon_{k} \leq x_2\Big)  \\ \sim \, &\mathbb P\Big(\prod_{k \in S_{p_1}} \varepsilon_{k} \leq x_1,\prod_{k \in S_{p_2}} \varepsilon_{k} \leq x_2,\frac{U_i}{U_j}= b_{ji}\prod_{k \in S_{p_1}} \varepsilon_{k}, \frac{U_m}{U_l}= b_{lm}\prod_{k \in S_{p_2}} \varepsilon_{k}\Big) ,\quad x_1,x_2\downarrow 1.
    \end{align*}
    for some constant $c \in (0,1)$.
\end{theorem}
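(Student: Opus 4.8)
\textbf{Proof plan for Theorem~\ref{epscountmult}.}
The plan is to mimic the proof of Theorem~\ref{epscount}, carrying along two coordinates in parallel, and then to identify the bivariate tail asymptotics with the asymptotics of a suitable product of noise variables. First I would use Corollary~\ref{solutioncor} to write $U_i=\bigvee_{k\in\An(i)}\bar b_{ki}Z_k$ and similarly for $U_j$, $U_m$, $U_l$, and use Lemma~\ref{lemma1}~c),~d) to recall that $U_i/U_j\ge b_{ji}$ and $U_m/U_l\ge b_{lm}$ almost surely, with the support of each ratio having left endpoint $b_{ji}$, resp.\ $b_{lm}$; thus both events $\{U_i/U_j\le b_{ji}x_1\}$ and $\{U_m/U_l\le b_{lm}x_2\}$ are genuinely ``left-tail'' events as $x_1,x_2\downarrow1$. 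The key structural observation, already used in Theorem~\ref{epscount}, is that $U_i/U_j$ is close to its left endpoint $b_{ji}$ precisely when the critical path realization for the pair $(j,i)$ is (a sub-path ending at $i$ of) the generic non-random critical path $p_1$; since $p_1$ is generic, $d_{ji}(p_1)=b_{ji}$ is the unique non-random maximizer, so on the event that $U_j$ is realized along $p_1$ we get $U_i=U_j b_{ji}\prod_{k\in S_{p_1}}\varepsilon_k$ exactly, and conversely any competing path $p'$ with $d_{ji}(p')<b_{ji}$ forces $U_i/U_j$ to be bounded below by $d_{ji}(p')/b_{ji}\cdot(\text{something})$, which stays away from $1$ unless a large-noise event of negligible probability occurs.

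Concretely, I would split $\mathbb P(U_i/U_j\le b_{ji}x_1,\ U_m/U_l\le b_{lm}x_2)$ into the contribution from the ``good'' event $G$, on which $U_i=U_jb_{ji}\prod_{k\in S_{p_1}}\varepsilon_k$ and $U_m=U_lb_{lm}\prod_{k\in S_{p_2}}\varepsilon_k$ simultaneously hold, and the complement. On $G$ the two ratios collapse exactly to $b_{ji}\prod_{k\in S_{p_1}}\varepsilon_k$ and $b_{lm}\prod_{k\in S_{p_2}}\varepsilon_k$, so
\[
\mathbb P\big(G,\ \tfrac{U_i}{U_j}\le b_{ji}x_1,\ \tfrac{U_m}{U_l}\le b_{lm}x_2\big)
=\mathbb P\big(G,\ \textstyle\prod_{k\in S_{p_1}}\varepsilon_k\le x_1,\ \prod_{k\in S_{p_2}}\varepsilon_k\le x_2\big),
\]
which is exactly the third expression in the statement. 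The event $G$ has positive probability by assumption~\eqref{biv:posscrit} (this is where Lemma~\ref{lemma:crit_rand_path}~b) and the compatibility condition on $S_{p_1},S_{p_2}$ enter: \eqref{biv:posscrit} is precisely the guarantee that the two path realizations are jointly compatible). I would then argue that $\mathbb P(G\mid \prod_{k\in S_{p_1}}\varepsilon_k\le x_1,\prod_{k\in S_{p_2}}\varepsilon_k\le x_2)\to c$ as $x_1,x_2\downarrow1$ for some $c\in(0,1)$: as $x_1,x_2\downarrow1$ all the noise variables on the two paths are forced close to $1$, so whether $U_j$ (resp.\ $U_l$) is realized along $p_1$ (resp.\ $p_2$) becomes an event whose conditional probability converges to the probability that the innovations/other-branch contributions do not dominate, and genericity of $p_1,p_2$ makes this limiting probability strictly positive and strictly less than $1$ (strictly less than $1$ because there is always positive probability that some $Z_k$ for $k$ not on the path, or some other incoming path, dominates). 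For the complementary event I would show $\mathbb P(G^c,\ U_i/U_j\le b_{ji}x_1,\ U_m/U_l\le b_{lm}x_2)=o\big(\mathbb P(\prod_{k\in S_{p_1}}\varepsilon_k\le x_1,\prod_{k\in S_{p_2}}\varepsilon_k\le x_2)\big)$, by the same domination argument as in Theorem~\ref{epscount}: on $G^c$, at least one of the two ratios being within a factor $x_i\downarrow1$ of its left endpoint forces a non-generic path to beat the generic one, which (by continuity and genericity) requires either a noise variable off the relevant path, or an innovation, to land in a set of probability $o(1)$ relative to the joint-noise tail.

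The main obstacle, as in the univariate case, is the last estimate — controlling $\mathbb P(G^c,\dots)$ — but now with the added complication that $S_{p_1}$ and $S_{p_2}$ may overlap, so the two conditioning events $\{\prod_{k\in S_{p_1}}\varepsilon_k\le x_1\}$ and $\{\prod_{k\in S_{p_2}}\varepsilon_k\le x_2\}$ are not independent and the normalizing probability need not factorize. I would handle this by conditioning on the shared noise variables $\{\varepsilon_r: r\in S_{p_1}\cap S_{p_2}\}$ first, reducing to a product of two (now conditionally independent) univariate estimates of exactly the type appearing in Theorem~\ref{epscount}, and then integrating back; the compatibility hypothesis in~\eqref{biv:posscrit}/Lemma~\ref{lemma:crit_rand_path}~b) is what ensures that on the shared segment the two realizations agree, so that conditioning on the common noise is consistent and no contradiction arises. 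A secondary technical point is that the two equalities $\{U_i/U_j=b_{ji}\prod_{S_{p_1}}\varepsilon_k\}$ and $\{U_m/U_l=b_{lm}\prod_{S_{p_2}}\varepsilon_k\}$ must be shown to hold \emph{simultaneously} on $G$ with positive conditional probability in the limit; this follows because, conditionally on all noise variables being near $1$, the randomness left is in the innovations $\bZ$, and by atom-freeness and the generic structure the set of innovation configurations realizing both paths simultaneously is an open set of positive measure (nonempty precisely by~\eqref{biv:posscrit}), whose measure is continuous in the noise near $\bone$. Combining the three pieces — the exact identity on $G$, the convergence of $\mathbb P(G\mid\cdots)\to c$, and the negligibility of $G^c$ — gives the chain of $\sim$ relations in the statement.
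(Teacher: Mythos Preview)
Your proposal follows essentially the same route as the paper: decompose according to whether both critical path realizations $\{U_i=U_jb_{ji}\prod_{k\in S_{p_1}}\varepsilon_k\}$ and $\{U_m=U_lb_{lm}\prod_{k\in S_{p_2}}\varepsilon_k\}$ hold (the paper's four-fold split $I_1+I_2+I_3+I_4$ via the law of total probability is exactly your $G$ versus the three ways $G$ can fail), identify the $G$-contribution with the third display in the statement, show it is $\sim c\,\mathbb P(\prod_{k\in S_{p_1}}\varepsilon_k\le x_1,\prod_{k\in S_{p_2}}\varepsilon_k\le x_2)$, and argue that the remaining pieces are negligible by recycling the univariate estimates from Theorem~\ref{epscount}.

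One technical point is worth flagging: your proposed device of conditioning on the shared noise $\{\varepsilon_r:r\in S_{p_1}\cap S_{p_2}\}$ to reduce to two conditionally independent univariate problems does not actually deliver conditional independence, because the two ratios still share innovations $Z_k$ (whenever $\An(i)\cap\An(m)\neq\emptyset$) and off-path noise variables through $U_j,U_l$. The paper sidesteps this. For the main term it writes out the event explicitly, mirroring the decomposition \eqref{I11_ext}--\eqref{I11} from the univariate proof, and then uses $1\le\prod_{k\in S_{p_i}}\varepsilon_k\le x_i$ to strip \emph{all} noise variables in $S_{p_1}\cup S_{p_2}$ (not just the shared ones) from the remaining constraints. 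This produces a sandwich $c_3(x_1,x_2)\le I_1/\mathbb P(\cdots)\le c_4(x_1,x_2)$ in which the bounding events are genuinely independent of the noise-product event, and $c_3,c_4\to c$ by atom-freeness. That sandwich is precisely the rigorous version of your heuristic ``conditionally on all path noise near $1$, the remaining randomness is in $\bZ$''. For the negligible terms $I_2,I_3,I_4$ the paper does not spell anything out either; it simply invokes the $I_2(x)=o(I_1(x))$ and $I_{12}(x)=o(I_{11}(x))$ arguments from Theorem~\ref{epscount} verbatim.
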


\section{Identification and estimation}\label{s4}

We first address the question of identifiability of $\bB$ from the distribution of $\bU$. In particular, we are going to show that even though innovations and noise variables are generally not identifiable, $\bB$ remains identifiable also in the propagating noise model. 

We discuss three settings \ref{setting1}-\ref{setting3} below.
For each setting, we propose an appropriate minimum ratio estimator for $\bB$. Afterwards, we will show the almost sure convergence of each of the estimators.

\subsection{Identifiability of the model}

Most results concerning the identifiability are based on results from section~\ref{s3}. As we have already seen in Example~\ref{dag1}, the edge weight matrix $\bC$ is generally not identifiable from the distribution of $\bU$. However, an immediate consequence of Lemma~\ref{lemma1}~d) is the following.

\begin{corollary}
Let $\bU\in\R^d_+$ be a recursive ML model with propagating noise on a DAG $\mathcal D$ as defined in \eqref{1stdef_1}. Then the ML coefficient matrix  $\bB$ is identifiable from the distribution of $\bU$. 
\end{corollary}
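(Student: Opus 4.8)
The plan is to derive identifiability of $\bB$ directly from Lemma~\ref{lemma1}~d), which describes the support of each ratio $U_i/U_j$ in terms of the entries of $\bB$. The key observation is that the distribution of $\bU$ determines the distribution of every ratio $U_i/U_j$ for $i,j \in V$, hence also its support. By Lemma~\ref{lemma1}~d), for $j \in \an(i)$ we have $\supp(U_i/U_j) = [b_{ji},\infty)$, so $b_{ji}$ is recovered as $\inf \supp(U_i/U_j)$, which is a functional of the distribution of $\bU$. Therefore every off-diagonal entry of $\bB$ corresponding to an ancestral relation is identifiable.

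The remaining step is to argue that the distribution of $\bU$ also tells us which entries of $\bB$ are zero and which are one, so that the full matrix is pinned down. First, for the diagonal we have $b_{ii}=1$ by definition. Second, the ancestral relation itself is identifiable from the distribution: again by Lemma~\ref{lemma1}~d), the four cases are mutually distinguishable from the law of the pair $(U_i,U_j)$ --- if $j \in \an(i)$ the ratio $U_i/U_j$ is supported on a half-line $[b_{ji},\infty)$ bounded below by a value $b_{ji}>1$ while $U_j/U_i$ is supported on $[0,1/b_{ji}]$; if instead $i \in \an(j)$ the roles are swapped; if $i=j$ the ratio is degenerate at $1$; and otherwise $\supp(U_i/U_j)=\R_+$, with the atom-freeness from Lemma~\ref{lemma1}~d) ruling out the degenerate case. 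Thus from the joint law of $\bU$ one reads off the set of ordered pairs $(j,i)$ with $j\in\an(i)$, and for each such pair one reads off $b_{ji}=\inf\supp(U_i/U_j)$; for all other off-diagonal pairs $b_{ji}=0$. This determines $\bB$ completely.

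I do not expect a genuine obstacle here, since all the analytic content is already contained in Lemma~\ref{lemma1}~d); the proof is essentially an unpacking of that statement. The only point requiring a word of care is distinguishing the ancestral case $j\in\an(i)$ from the ``otherwise'' case when $b_{ji}$ happens to be close to $1$: one must use that in the ancestral case $\P(U_i/U_j = b_{ji}) > 0$ fails --- wait, this needs checking, since Lemma~\ref{lemma1}~d) asserts atom-freeness even in the ancestral case, so the distinguishing feature is purely the shape of the support (a half-line bounded strictly away from $0$, versus all of $\R_+$), together with the complementary support shape of the reciprocal ratio. This is unambiguous, so the argument goes through cleanly.

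\begin{proof}
Since the distribution of $\bU$ is given, the distribution --- and hence the support --- of each ratio $U_i/U_j$ for $i,j \in V$ is determined. By Lemma~\ref{lemma1}~d), for every pair $i\ne j$ exactly one of the following holds, and the four are mutually distinguishable from the joint law of $(U_i,U_j)$: $\supp(U_i/U_j)=[b_{ji},\infty)$ with $b_{ji}\ge 1$ precisely when $j\in\an(i)$; $\supp(U_i/U_j)=[0,1/b_{ij}]$ precisely when $i\in\an(j)$; and $\supp(U_i/U_j)=\R_+$ otherwise (the case $i=j$ giving the degenerate value $1$ is excluded for $i\ne j$). Consequently, the set of ordered pairs $(j,i)$ with $j\in\an(i)$ is identifiable from the distribution of $\bU$. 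For each such pair, $b_{ji}=\inf\supp(U_i/U_j)$ is identifiable as well; for all remaining off-diagonal pairs $b_{ji}=0$, and $b_{ii}=1$ for all $i$. Hence $\bB=(b_{ij})_{d\times d}$ is completely determined by the distribution of $\bU$.
\end{proof}
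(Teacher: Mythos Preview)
Your proof is correct and follows the same route as the paper, which simply records the corollary as an immediate consequence of Lemma~\ref{lemma1}~d). One small slip: in the formal proof you write ``with $b_{ji}\ge 1$'', but the edge weights $c_{ki}$ are arbitrary positive reals, so $b_{ji}$ need not be $\ge 1$; the correct (and sufficient) fact is $b_{ji}>0$, which already distinguishes the support $[b_{ji},\infty)$ from $\R_+$ via its infimum --- exactly as you noted in your informal discussion.
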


Since we can identify $\bB$ from the distribution of $\bU$, we can also identify the minimum ML DAG $\D^B$ from Definition~\ref{minD:original} (which by Definition~\ref{def:RanMinDag} and Corollary~\ref{minD} is the minimum DAG preserving the distribution of $\bU$). 
Therefore, since $\varepsilon\geq 1$, Theorem~2 of \cite{nadine2} also holds for the propagating noise model as defined in \eqref{1stdef_1}. 
Therefore, as exemplified in Example~\ref{dag1}, we can identify the class of all DAGs and edge weights that could have generated $\bU$.

However, unlike for the non-noisy model, we can generally not identify innovations or noise variables. To see this assume a source node $U_i$ in a DAG $\mathcal D$ such that $\an(i)=\emptyset$. If $\bU$ follows a recursive ML model with propagating noise, then $U_i:=Z_i \eps_i.$ In particular, we can not identify $Z_i$ or $\eps_i$. 

When estimating a recursive ML model, we distinguish between three settings:

\begin{enumerate}[label={(\arabic*)}] 
    \item \label{setting1} 
    All ancestral relations are known; i.e., we know the set of edges $E$, hence the DAG.
    This might be the case when modeling networks that contain natural information about edges. The problem then reduces to finding appropriate estimates $\hat{b}_{ji}$ for $j \in \an(i)$.
    \item 
    The ancestral relations are unknown; however, we know a topological order of the nodes. Then, in contrast to setting 1, we need to decide if a path from $j$ to $i$ with $j<i$ exists.
    \item \label{setting3} 
    Neither the underlying DAG nor a topological order of the nodes is known. Then we need to find a topological order of the nodes and proceed then as in setting 2.
\end{enumerate}

We next want to estimate $\bB$ for each of the three settings \ref{setting1}-\ref{setting3}.

\subsection{Known DAG structure with unknown edge weights}\label{s51}

Given an iid sample $\bU^1,\ldots,\bU^n$ from a recursive ML model with propagating noise on a known DAG $\mathcal D$ as defined in \eqref{1stdef_1} and knowing all ancestral relations of $\mathcal D$, we could choose the simple estimate 
\begin{align}\label{estimate2a}
    \check{\bB}:=(\check{b}_{ij})_{d \times d}=\Big(\bigwedge\limits_{k=1}^n \frac{ U_j^k}{ U_i^k} \bone_{\An(j)}(i) \Big)_{d \times d}.
\end{align}
However, as in the non-noisy model, the estimate \eqref{estimate2a} may not define any recursive ML model on the given DAG $\D$, cf. Example~3 of \cite{nadine2}. 

We use instead
\begin{align}\label{estimate1}
\bB_0 = (B_0(i,j))_{d \times d} :=\Big(\bigwedge\limits_{k=1}^n \frac{U_j^k}{U_i^k} \bone_{\pa(j) }(i)\Big)_{d \times d}\quad\mbox{and set}\quad
    \hat{\bB}=(\bI_d \lor \bB_0)^{\odot(d-1)}.
\end{align}
Applying Lemma~2 in \cite{nadine2} to $\bB_0$,
the estimator $\hat \bB$ yields a valid estimate of the given DAG in the sense that $\hat{\bB}$ defines a recursive ML model and for any pair $(j,i) \not \in E(\mathcal D)$ we have $\hat{b}_{ji}=\bigvee_{k \in \{1, \ldots, d\} \setminus \{j,i\}}\hat{b}_{jk}\hat{b}_{ki}$. Moreover, by the idempotency of $\hat{\bB}$ and Lemma~\ref{lemma1} c), similarly to the non-noisy model, it also holds that
\begin{align}\label{ineq:estimator}
    b_{ji}\leq \hat{b}_{ji} \leq \check{b}_{ji},\quad j\in\an(i). 
\end{align}

\subsection{Known topological order} \label{s52}

Given an iid sample $\bU^1,\ldots,\bU^n \in \mathbb R_+^d$ from a recursive ML model with propagating noise without knowing $\D$, but knowing the topological order of nodes, we adapt the estimator \eqref{estimate2a} to this situation and define
\begin{align}\label{estimate2}
    \hat{\bB}:=(\hat{b}_{ij})_{d \times d}=\Big(\bigwedge\limits_{k=1}^n \frac{U_j^k}{U_i^k} \bone_{(i<j)} \Big)_{d \times d}.
\end{align}

\subsection{Unknown DAG and unknown topological order}\label{s53}

Given an iid sample $\bU^1,\ldots,\bU^n \in \mathbb R_+^d$ from a recursive ML model with propagating noise without knowing $\mathcal D$ or the topological order, we will recover a topological order first and then proceed as in section~\ref{s52}.

{Estimating the topological order of an underlying DAG is often done by learning algorithms that successively identify source nodes and succeeding generations. For additive models, usually regression techniques are applied (see e.g. \cite{drtoncausal} or \cite{JMLR:v15:peters14a}). In the recursive ML model, the noise is not additive and the model is highly non-linear.
Hence, such regression methods cannot be applied. However, under the condition of multivariate regular variation, the paper \cite{krali} suggests a learning algorithm for the model without noise as given in \eqref{nonnoise}. 
We propose a different approach, which to the best of our knowledge has not been considered in the literature before. It applies to the propagating noise model without any distributional assumptions on the innovations and noise variables and learns the DAG by using minimum ratios.}
We first consider the matrix of all minimum ratios given by
\begin{align} \label{minratios}
    \check{\bB}:=(\check{b}_{ij})_{d \times d}=\Big(\bigwedge\limits_{k=1}^n \frac{U_j^k}{U_i^k} \Big)_{d \times d}.
\end{align}
Let $\Pi$ denote the set of all topological orders of $V$. 
Furthermore, denote an equivalence class of topological orders induced by the underlying (unknown) DAG $\mathcal D=(V,E)$ by
\begin{align}\label{equivalence}
    R_{\D} :=\{\pi \in \Pi: \pi(j)<\pi(i)\quad  \mbox{ for all } (j,i) \in E\}.
\end{align}
By Lemma~\ref{lemma1}~d), 
$\check{b}_{ji}$ is lower bounded by $b_{ji}$ for $j \in \an(i)$ and  $\check{b}_{ji} \to 0$ a.s. as $\nto$ for $j \not\in \An(i)$. This is a direct result from Lemma~\ref{lemma1} c) and the fact that the minimum is non-increasing.
Hence, for any $\pi \in R_{\mathcal D}$ it holds that $\check{b}_{ji} \to 0$ a.s. as $n \to \infty$ whenever $\pi(j)>\pi(i)$. Therefore, also 
\begin{align}\label{maxel}
    \max\limits_{\substack{(j,i) \in V \times V: \\ \pi(j)> \pi(i)}}\check{b}_{ji} \to 0 \quad\mbox{ a.s. for }n \to \infty.
\end{align}
In contrast, for any $\pi \not \in R_D$, there is a pair of nodes $(j,i)$ such that $b_{ji}>0$ although $\pi(j)>\pi(i)$. For this reason, 
\begin{align}\label{maxel2}
    \max\limits_{\substack{(j,i) \in V \times V: \\ \pi(j)> \pi(i)}}\check{b}_{ji} \to c_\pi >0 \quad \mbox{ a.s. for }n \to \infty.
\end{align}

As a consequence, for a given topological order $\pi$, by \eqref{maxel} and \eqref{maxel2}, the maximum converges almost surely to zero if and only if $\pi \in R_{\mathcal D}$. 
Hence we propose a topological order that minimizes this expression, i.e.,
\begin{align}\label{mingreedy}
    \argmin\limits_{\pi \in \Pi}\max\limits_{\substack{(j,i) \in V \times V: \\ \pi(j)> \pi(i)}}\check{b}_{ji}.
\end{align}

A topological order found by \eqref{mingreedy} generally is not unique. Algorithm~\ref{alg1} returns a unique topological order for any fixed estimated matrix $\check{\bB}$.

\begin{algorithm}
\caption{Estimating a topological order }\label{alg1}
\begin{algorithmic}[1]
\REQUIRE A matrix of minimum ratios $\check{\bB}$ as in \eqref{minratios} 
\ENSURE An estimated topological order $\hat \pi$
\STATE  Set $ \check  \D=(V,E)$ with $V=\{1,\ldots,d\}$ and $E=\emptyset$.
\STATE  Set $S:=\{(j,i) \in V \times V: j \neq i\}$ and sort the elements $(j,i)$ of $S$ by the size of $\check{b}_{ji}$ from big to small.
\FOR{$(j,i)$ in S}
    \IF{$i \not\in\an(j)$ in $ \check \D$}
        \STATE $E=E\cup (j,i)$
    \ENDIF
\ENDFOR
\STATE \textbf{return} the topological order $\hat{\pi}$ of the DAG $ \check \D$
\end{algorithmic}
\end{algorithm}
The DAG $\check\D$ constructed in Algorithm~\ref{alg1} works as an auxiliary instrument to infer a topological order. Since $\check\D$ is complete with edges between every node pair in $V$, it returns a unique topological order. Moreover, since we sort the weights by size, the algorithm solves \eqref{mingreedy} in an optimal way for given $\check B$. 
At first sight the algorithm bears some similarity to Kruskal's classical algorithm for finding a minimum spanning tree; see \cite{Kruskal}.
However, Algorithm~\ref{alg1} works with directed edges and, of course, the optimization problem itself is very different.

Adding an edge and checking the presence of a path between any pair of nodes both can be implemented in $O(d)$ amortized complexity (see \cite{it1}). Hence, since $S$ as computed in line~2 of Algorithm~\ref{alg1} contains $d(d-1)$ pairs of nodes, we have an overall amortized complexity of $O(d^3)$.
After Algorithm~\ref{alg1} we can again use the minimum ratio estimator
\begin{align} \label{estimate3}
    \hat{\bB}:=(\hat{b}_{ij})_{d \times d}=\Big(\bigwedge\limits_{k=1}^n \frac{U_j^k}{U_i^k} \bone_{(\hat \pi(i)<\hat \pi(j))} \Big)_{d \times d}.
\end{align}

\subsection{Strong consistence of $\hat \bB$ and learning the minimum ML DAG $\mathcal D^B$}\label{s45} 

We first want to formally state the  a.s. convergence of the proposed estimators for the ML coefficient matrix $\bB$. Afterwards, we discuss how to learn the minimum ML DAG $\mathcal D^B$. The proofs of Proposition~\ref{a.s.convergence} and Lemma~\ref{alg2_lemma} can be found in Appendix~\ref{B}.

\begin{proposition} \label{a.s.convergence}
    Let $\bU\in\R^d_+$ be a recursive ML vector with propagating noise as defined in \eqref{1stdef_1} and let $\bU^1,\ldots,\bU^n \in \mathbb R_+^d$ be an iid sample from $\bU$. Then the estimates \eqref{estimate1}, \eqref{estimate2} and \eqref{estimate3} of $\bB$ are strongly consistent, i.e., it holds a.s. for $n \to \infty$ that
    \begin{align*}
        \hat b_{ji} \longrightarrow b_{ji} \quad \text{ for } j \in \an(i),  \quad \hat b_{ii}=1,\quad\text{and}\quad  \hat b_{ji} \longrightarrow 0 \quad \text{ for }   j\in V\setminus\An(i).
    \end{align*}
\end{proposition}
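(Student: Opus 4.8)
The plan is to prove the three claimed limits separately, handling the common structural part first and then the three estimators \eqref{estimate1}, \eqref{estimate2}, \eqref{estimate3} in turn.

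First I would establish the easy cases. The identity $\hat b_{ii}=1$ is immediate from the definition of each estimator, since the diagonal is set to $1$ (or $U_i^k/U_i^k=1$) and the indicator vanishes off the diagonal anyway. For $j \in V\setminus \An(i)$, i.e.\ $j$ is not an ancestor of $i$ and $j \neq i$, I would argue that $\hat b_{ji}\to 0$ a.s. For the topological-order estimators \eqref{estimate2} and \eqref{estimate3} this is forced by the indicator: once $\hat\pi$ is a correct topological order (Lemma~\ref{alg2_lemma}, or directly the fact that eventually $\hat\pi\in R_{\mathcal D}$ from \eqref{maxel}/\eqref{maxel2}), the indicator $\bone_{(\hat\pi(i)<\hat\pi(j))}$ is $0$ for all non-descendant pairs since $j$ not an ancestor of $i$ means in a valid topological order we cannot have $\hat\pi(j)<\hat\pi(i)$ forced — wait, this needs care: actually for $j\notin\An(i)$ the pair may still satisfy $\hat\pi(i)<\hat\pi(j)$ in a valid order, so the indicator need not vanish. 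So the real argument is the analytic one: by Lemma~\ref{lemma1}~d), $\supp(U_i/U_j)=\mathbb R_+$ when $i,j$ are not ancestrally related, hence $\inf \supp(U_i/U_j)=0$, so $\bigwedge_{k=1}^n U_i^k/U_j^k \downarrow 0$ a.s. as $n\to\infty$ by the standard fact that the sample minimum of an iid sequence converges a.s. to the essential infimum. For \eqref{estimate1} with a known DAG, $\hat b_{ji}$ for a non-ancestral pair is built up via the Kleene-star operation $(\bI_d\vee\bB_0)^{\odot(d-1)}$ from products $\hat b_{jk}\hat b_{ki}$ along directed paths in $\mathcal D$; since $j\notin\An(i)$ there is no such directed path, so $\hat b_{ji}$ can only be a product through some node, all of whose factors are themselves driven to $0$, giving $\hat b_{ji}\to 0$.

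Next, the main case $j\in\an(i)$. The lower bound $\hat b_{ji}\ge b_{ji}$ holds for all three estimators: for \eqref{estimate2},\eqref{estimate3} it is Lemma~\ref{lemma1}~c) ($U_i/U_j\ge b_{ji}$ pointwise) together with eventual correctness of $\hat\pi$; for \eqref{estimate1} it is \eqref{ineq:estimator}. For the matching upper bound it suffices to show $\check b_{ji}=\bigwedge_{k=1}^n U_i^k/U_j^k \to b_{ji}$ a.s., because \eqref{ineq:estimator} sandwiches $\hat b_{ji}$ between $b_{ji}$ and $\check b_{ji}$ (and for \eqref{estimate2},\eqref{estimate3} the estimator equals $\check b_{ji}$ once the indicator is $1$, which happens eventually). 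That convergence is the standard sample-minimum argument: by Lemma~\ref{lemma1}~d), $\supp(U_i/U_j)=[b_{ji},\infty)$, so the essential infimum of $U_i/U_j$ is exactly $b_{ji}$; moreover $\mathbb P(U_i/U_j< b_{ji}+\delta)>0$ for every $\delta>0$ (indeed $\mathbb P(U_i/U_j=b_{ji})=0$ by atom-freeness but the left endpoint is still the infimum of the support), hence by Borel--Cantelli / monotonicity the non-increasing sequence $\check b_{ji}^{(n)}$ converges a.s. to $b_{ji}$.

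The step I expect to be the main obstacle is making the interplay between the estimated topological order $\hat\pi$ and the a.s. limits fully rigorous for estimators \eqref{estimate2} and \eqref{estimate3}: $\hat\pi$ in \eqref{estimate3} is itself random and data-dependent, so one cannot simply ``condition on a correct order.'' The clean way is to invoke the companion Lemma~\ref{alg2_lemma} (which presumably states that $\hat\pi\in R_{\mathcal D}$ eventually a.s., via \eqref{maxel}--\eqref{maxel2} and the structure of Algorithm~\ref{alg1}), and then observe that on the a.s.\ event where $\hat\pi$ is eventually a fixed correct order, the indicator in \eqref{estimate3} stabilizes: for $j\in\an(i)$ it is eventually $1$, so $\hat b_{ji}=\check b_{ji}\to b_{ji}$; for $j\notin\An(i)$ with $i\in\an(j)$ the indicator is eventually $0$ so $\hat b_{ji}=0$ trivially converges to $0$; and for $j\notin\An(i)$, $i\notin\an(j)$ the analytic argument above applies regardless of the indicator. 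One must also note that there are only finitely many ($d!$) topological orders, so ``eventually correct'' is a statement about a finite union of events, keeping everything within a single a.s.\ set. Assembling these cases gives the claim for all three estimators simultaneously.
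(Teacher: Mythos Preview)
Your proposal is essentially correct and follows the paper's approach: sample-minimum convergence to the left support endpoint via Lemma~\ref{lemma1}~d) (the paper phrases this as convergence in probability plus monotonicity), the sandwich \eqref{ineq:estimator} for estimator \eqref{estimate1}, and the facts \eqref{maxel}--\eqref{maxel2} for estimator \eqref{estimate3}. Two minor corrections: Lemma~\ref{alg2_lemma} concerns Algorithm~\ref{alg2} (idempotency of the thresholded matrix), not the eventual correctness of $\hat\pi$---the relevant input is indeed \eqref{maxel}--\eqref{maxel2} together with the argmin structure \eqref{mingreedy} of Algorithm~\ref{alg1}; and for estimator \eqref{estimate1} with $j\notin\An(i)$, the Kleene-star entry $\hat b_{ji}$ is exactly $0$ (there is no directed path from $j$ to $i$ in $\mathcal D$, so no nonzero product can arise in $(\bI_d\vee\bB_0)^{\odot(d-1)}$), not merely converging to $0$.
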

  
In sections~\ref{s51}-\ref{s53} we have been discussing how to estimate $\bB$ under the settings (1)-(3). 
However, as we know from Corollary~\ref{minD}, only critical edges of $\D$ contribute to the distribution of $\bU$. 
Asymptotically, we can almost surely identify $\mathcal D^B$ since there is an edge $j\to i$ in $\mathcal D^B$ if and only if $b_{ji}>b_{jl}b_{li}$ for all $l\in \de(j)\cap\an(i)$.

However, in real life we estimate the edges of $\mathcal D^B$ for a finite data set. Since  $\bigwedge_{k=1}^n  (U_i^k/ U_j^k)>0$ holds for all $n\in\N$ and all $i,j\in V$,
the estimators \eqref{estimate2} or \eqref{estimate3} result in a matrix representing a complete DAG.

Since small estimated values $\hat{b}_{ji}$ may well be 0 in the true model, we use a threshold $\delta_1>0$ with the aim to set an estimator $\hat{b}_{ji}<\delta_1$ equal to 0. 
However, setting single values $\hat{b}_{ji}:=0$ may destroy the idempotency of $\hat{\bB}$ since idempotency requires for any triple of nodes $(j,l,i)$,
\begin{align}\label{tripleB}
\hat{b}_{jl}\hat{b}_{li} &=\bigwedge\limits_{k=1}^n \frac{U_l^k}{ U_j^k} \bigwedge\limits_{k=1}^n  \frac{U_i^k}{ U_l^k} \leq \bigwedge\limits_{k=1}^n \frac{  U_l^k}{ U_j^n } \,\frac{ U_i^k}{  U_l^k}=\hat{b}_{ji}.
\end{align}
For the estimates however, it might be possible that $\hat{b}_{ji}<\delta_1$, while $\hat{b}_{jl}>\delta_1$ and $\hat{b}_{li}>\delta_1$. In this case, setting $\hat{b}_{ji}=0$ would result it $\hat{b}_{ji} < \hat{b}_{jl}\hat{b}_{li}$ violating \eqref{tripleB}. 
To preserve the idempotency of $\hat \bB$ while setting some small values to 0, we propose a simple adapted thresholding algorithm.

\begin{algorithm}
\caption{Thresholding while maintaining idempotency}\label{alg2}
\begin{algorithmic}[1]
\REQUIRE A topological order $\pi:1,\ldots,d$ and an idempotent estimate $\hat{\bB}$ as in \eqref{estimate2} or \eqref{estimate3} and a threshold value $\delta_1>0$
\ENSURE An idempotent estimate $\hat{ \bB}$
\STATE $E:=\{(j,i) \in V \times V:\sgn(\hat{b}_{ji})=1\text{ and } i \neq j\}$
\STATE  $\mathcal D:=(V,E)$
\STATE  $S:=\{(j,i) \in E:  0<\hat{b}_{ji}<\delta_1\}$
\STATE Sort the pairs $(j,i)$ in $S$ by the distance $i-j$ from low to high
\FOR{$(j,i)$ in S}
    \IF{$(j-i) == 1$}
        \STATE $\hat{b}_{ji}=0$
    \ENDIF
    \IF{for every $l$ with $j<l<i$: $(j,l)$ or $(l,i) \in S$} \label{If-condition}
        \STATE $\hat{b}_{ji}=0$ 
    \ELSE 
        \STATE $S=S\setminus \{(j,i)\}$
    \ENDIF
\ENDFOR
\STATE \textbf{return} $\hat{\bB}$
\end{algorithmic}
\end{algorithm}

\begin{lemma}\label{alg2_lemma}
    Algorithm~\ref{alg2} with threshold $\delta_1>0$ outputs an idempotent matrix, i.e.  $\hat{ \bB} \odot \hat{ \bB}= \hat{ \bB}$ and there is no other idempotent matrix $\bB'$ such that $b'_{ji}=\hat{b}_{ji}$ whenever $\hat{b}_{ji}>\delta_1$ that contains more zero entries than $\hat{ \bB}$.
\end{lemma}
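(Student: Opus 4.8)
\textbf{Proof plan for Lemma~\ref{alg2_lemma}.}

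The plan is to prove two assertions separately: first, that the output matrix $\hat{\bB}$ is idempotent with respect to $\odot$; second, that it is minimal in the sense that no idempotent matrix agreeing with $\hat{\bB}$ above the threshold has strictly more zero entries. Throughout I would work with the DAG $\mathcal D = (V,E)$ and the topological order $\pi$, so that $(j,i) \in E$ only if $j < i$; recall that idempotency of a Kleene-type matrix is equivalent to the triangle condition $\hat b_{jl}\hat b_{li} \le \hat b_{ji}$ for all triples $j<l<i$ (together with $\hat b_{ii}=1$), which the input matrix satisfies by \eqref{tripleB}.

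For idempotency of the output, I would track which entries get zeroed. Let $S$ be the set of pairs $(j,i)$ with $0<\hat b_{ji}<\delta_1$, and let $T \subseteq S$ be the set actually set to $0$ by the algorithm. The key invariant is that a pair $(j,i)$ is zeroed exactly when either $i-j=1$, or for \emph{every} intermediate $l$ with $j<l<i$ at least one of $(j,l)$, $(l,i)$ lies in $S$ at the moment $(j,i)$ is processed. Since pairs are processed in increasing order of $i-j$, when $(j,i)$ is handled all pairs $(j,l)$ and $(l,i)$ with smaller span have already been processed, so ``lies in $S$'' at that moment means ``was either zeroed or removed from $S$''; I would argue that entries removed from $S$ keep their (positive, hence $\ge \delta_1$ after the fact is impossible — rather $<\delta_1$) value, and trace carefully that the triangle inequality $\hat b_{jl}\hat b_{li}\le \hat b_{ji}$ is preserved after zeroing. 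Concretely: suppose after the algorithm $\hat b_{jl}>0$ and $\hat b_{li}>0$ but $\hat b_{ji}=0$ for some $j<l<i$. Then $(j,i)\in T$, so either $i-j=1$ (impossible, as then there is no such $l$), or for every $m$ with $j<m<i$ one of $(j,m),(m,i)$ was in $S$ when $(j,i)$ was processed; taking $m=l$ gives $(j,l)\in S$ or $(l,i)\in S$ at that time. A pair in $S$ at that time is one that has either been zeroed or not yet removed; I would show that in fact it must eventually be zeroed (a pair is only removed from $S$ when the ``for every $l$'' condition \emph{fails}, and one shows inductively on span that failure propagates — actually the cleanest route is to argue that if $(j,l)$ stays positive it was removed from $S$, meaning some intermediate node $m'$ with $j<m'<l$ has both $(j,m')\notin S$ and $(m',l)\notin S$, i.e.\ both $\ge\delta_1$ or zeroed-with-the-descendant-structure; peeling this recursion down yields a contradiction with $\hat b_{ji}$ being below threshold via \eqref{tripleB}). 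This is the step I expect to be the main obstacle: the bookkeeping of ``in $S$ at processing time'' versus ``zeroed at the end'' versus ``removed'', combined with the recursion on path span, needs to be set up with a clean induction hypothesis so that the triangle condition is visibly maintained.

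For minimality, I would argue as follows. Suppose $\bB'$ is idempotent, $b'_{ji}=\hat b_{ji}$ whenever $\hat b_{ji}>\delta_1$, and $\bB'$ has at least as many zeros as $\hat{\bB}$. It suffices to show every entry zeroed by the algorithm must be $0$ in any such $\bB'$ — equivalently, that the entries the algorithm keeps positive (those removed from $S$, plus those already $\ge\delta_1$) cannot be zeroed in an idempotent completion. An entry $(j,i)$ with $j<i$ that the algorithm keeps positive but below $\delta_1$ was removed from $S$ because there exists $l$ with $j<l<i$ such that neither $(j,l)$ nor $(l,i)$ is in $S$, meaning each of $\hat b_{jl},\hat b_{li}$ is either $\ge\delta_1$ or was itself zeroed; pushing this down one more level for any zeroed one, and iterating, one reaches a decomposition of the span $[j,i]$ into sub-pairs each of which has value $\ge\delta_1$ — hence value forced equal in $\bB'$ — whose product bounds $b'_{ji}$ from below by idempotency of $\bB'$, forcing $b'_{ji}>0$. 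Since $\hat{\bB}$ zeros \emph{every} pair for which this obstruction is absent (that is the content of the \texttt{if}-condition on line~\ref{If-condition}), $\hat{\bB}$ attains the maximal possible number of zeros, and the tie in the count forces $\bB'$ to zero exactly the same set. A brief remark that $\hat b_{ii}=1$ is untouched and that pairs outside $E$ were already $0$ completes the argument.
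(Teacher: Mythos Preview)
Your overall plan---contradiction for idempotency, descent on span for minimality---matches the paper's proof, but there are two genuine gaps in the bookkeeping of the set $S$ that you need to fix before the argument goes through.

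First, the idempotency step is much simpler than you make it. The crucial observation you nearly reach but then back away from is this: when $(j,i)$ is processed, every pair $(j,l)$ or $(l,i)$ with $j<l<i$ has \emph{strictly smaller} span and has therefore \emph{already} been processed. A pair that has been processed is still in $S$ if and only if it was zeroed (the \texttt{else} branch on line~\ref{If-condition} removes it; the zeroing branches do not). So ``$(j,l)\in S$ at the time $(j,i)$ is processed'' already forces $\hat b_{jl}=0$---no recursion, no ``eventually'', just an immediate contradiction with $\hat b_{jl}>0$. Your phrase ``either been zeroed or not yet removed'' and the subsequent induction are unnecessary: the second alternative is impossible for smaller-span pairs. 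This is exactly the paper's three-line argument.

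Second, and more seriously, in the minimality paragraph you have the semantics of $S$ reversed. You write that ``neither $(j,l)$ nor $(l,i)$ is in $S$, meaning each of $\hat b_{jl},\hat b_{li}$ is either $\ge\delta_1$ or was itself zeroed''. This is wrong: a smaller-span pair that is \emph{not} in $S$ at processing time is one that either was never in $S$ (value $\ge\delta_1$) or was \emph{removed} from $S$ in the \texttt{else} branch---i.e.\ \emph{kept positive}, not zeroed. Zeroed pairs stay in $S$. With the correct reading, your descent works as intended: if $(j,i)$ is kept positive below $\delta_1$, there is some $l$ with both $\hat b_{jl},\hat b_{li}>0$ in the output; if $b'_{ji}=0$ then idempotency of $\bB'$ forces $b'_{jl}=0$ or $b'_{li}=0$; whichever one vanishes either has $\hat b>\delta_1$ (immediate contradiction via the preservation hypothesis) or was itself kept positive below $\delta_1$, and you recurse on a strictly smaller span. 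At span one the ``kept positive below $\delta_1$'' case is impossible (such pairs are always zeroed), so the recursion terminates in a contradiction. This is the paper's argument. Also, your sentence ``It suffices to show every entry zeroed by the algorithm must be $0$ in any such $\bB'$'' is the wrong direction---it is not equivalent to the statement that follows it; you want exactly the second formulation (entries kept positive by the algorithm cannot be zero in $\bB'$), which is what you in fact argue.
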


\begin{remark}
If we choose $\delta_1\leq \min\{\check{b}_{ji}: j< i\}$ no entry is set to 0, and if $\delta_1> \max\{\check{b}_{ji}: j< i\}$ all entries are set to 0 except for the diagonal.
So in the first case, we obtain the complete DAG and in the second case the DAG consists of isolated nodes only. 
\end{remark}

In order to estimate the minimum ML DAG $\mathcal D^B$ it is not sufficient to decide if a path from $j$ to $i$ exists, i.e. if $b_{ji}>0$. 
We need in particular to decide if the edge $j\to i$ belongs to $\mathcal D^B$. By continuity of the noise variables we may observe for the estimated path weights
\begin{align*}
    \hat{b}_{ji}> \hat{b}_{jl}\hat{b}_{li}
\end{align*}
even if ${b}_{ji}={b}_{jl}{b}_{li}$. 
However, by Proposition~\ref{a.s.convergence}, in this situation the difference $ (\hat{b}_{ji}- \hat{b}_{jl}\hat{b}_{li}) \to 0$ a.s. as $n \to \infty$. 
Therefore, we introduce another threshold $\delta_2>0$ enforcing an edge in $\D^B$ if this difference is greater than $\delta_2$. 
In Theorem~\ref{epscount} we have seen that the distribution of the ratio $\mathbb P(U_i/U_j \leq b_{ji}x)$ is asymptotically determined by $\mathbb P (\prod_{k \in S_p} \varepsilon_k-1 \leq x)$ for $x \downarrow 0$. Hence, the rate of convergence of $(\hat{b}_{ji}- \hat{b}_{jl}\hat{b}_{li})$ depends crucially on the path length $m=\vert S_p \vert$. 
Ideally, we therefore choose $\delta_2=\delta_2(n,m)$ depending not only on the sample size $n$, but also on the path length $m$. 

More precisely, since $F^{\leftarrow}_{\sum_{k\in S_p}\tilde\eps_k}(1/n) \sim F^{\leftarrow}_{\prod_{k\in S_p}\eps_k-1}(1/n)$ (see Theorem~\ref{3.11} and its proof below), and assuming that $\bC$ is generic, we find that
Algorithm~\ref{alg3} asymptotically identifies $\mathcal D^B$, if 
    \begin{align*}
        F^\leftarrow_{\sum\limits_{k \in S_p}\tilde \varepsilon_k}(1/n) = o(\delta_2(n,m))\quad 
        \mbox{ for } \quad n\to\infty.
    \end{align*}
In real life we do not know the number of critical edges in either of the three settings. We distinguish between setting (1) and settings (2)-(3) and propose Algorithm~\ref{alg3} with $\delta_2(m):=\delta_2(n,m)$, i.e., for a fixed sample size $n$ we focus on the path length $m$.

\begin{algorithm}
\caption{Approximating max-weighted paths}\label{alg3}
\begin{algorithmic}
\REQUIRE Threshold sequences $\delta_2(1),\ldots,\delta_2(d)$ and \\
(a): a known underlying DAG $\mathcal D:=(V,E)$ and an estimate $\hat{\bB}$ as in \eqref{estimate1}, or \\
(b): a (known or estimated) topological order $\pi:1,\ldots,d$ and an estimate $\hat{\bB}$ as in  \eqref{estimate2} or \eqref{estimate3} 
\ENSURE An estimated minimum DAG $\mathcal D^{\hat{B}}=(V, E^{\hat B})$
\STATE $E^{\hat B}:=\emptyset$ and  $\mathcal D^{\hat B}:=(V, E^{\hat B})$
\STATE (1):\qquad $S:=\{(j,i) \in V\times V: j \in \pa(i) \}$ and infer a topological oder $\pi:1,\ldots,d$ from $\mathcal D$
\STATE  (2)-(3): $S:=\{(j,i) \in V\times V: j < i \}$
\STATE Sort pairs $(j,i)$ in $S$ by their distance $(i-j)$ according to the topological order from low to high
\FOR{$(j,i)$ in S}
    \IF{$\exists$ path $p$ from $j$ to $i$ in $\mathcal D^{\hat B}$}
        \STATE Set $m$ as the maximum path length in $\mathcal D^{\hat B}$
        \STATE Set $l:=\argmax\limits_{l \in V\setminus\{j,i\}}\left(\check{b}_{jl}\check{b}_{li}\right)$
        \IF{ ($\check{b}_{ji}- \check{b}_{jl}\check{b}_{li})>\delta_2(m)$}
            \STATE $E^{\hat B}:=E^{\hat B}\cup \{(j,i)\}$
        \ENDIF
    \ELSE 
        \IF{ $\check{b}_{ji}>0$}
            \STATE $E^{\hat B}:= E^{\hat B}\cup \{(j,i)\}$
        \ENDIF
    \ENDIF
\ENDFOR
\STATE \textbf{return} $\mathcal D^{\hat{B}}=(V, E^{\hat B})$
\end{algorithmic}
\end{algorithm}

For setting (1) we do know the underlying unweighted DAG $\mathcal D$. Therefore, we do not need to decide whether some small value $\hat b_{ji}$ corresponds to a path from $j$ to $i$. However, we do not know the minimum ML DAG $\mathcal D^B$ such that we would apply Algorithm~\ref{alg3} to estimate $\mathcal D^B$. For settings (2) and (3) we would apply first Algorithm~\ref{alg2} and afterwards Algorithm~\ref{alg3}.

In the next section we derive the asymptotic distribution of the estimators.

\section{Asymptotic distribution of the minimum ratio estimators}\label{s5}

With the goal of proving  asymptotic distributional properties of the minimum ratio estimators for the different settings (1)-(3), 
we require regular variation of the noise variable $\eps$ in its left endpoint. 
Under this condition we first prove that also the minimum ratio estimators $\bigwedge_{k=1}^n (U_i^k/U_j^k)$ are regularly varying. 
Moreover, we show that their joint limit distribution is the product of Weibull distributions. In this section we assume $\bC$ is generic in the sense of Definition~\ref{def:critical}. The results can be extended to a non-generic model by similar methods as used in Corollary~\ref{epscount_prop}.

We first recall the family of Weibull distribution functions, which will act as limit distributions for the estimators of $\bB$, whose strong consistency we have already proved in section~\ref{s45}.

\begin{definition}\label{defn:5.1}
    A positive random variable $Y$ is \emph{Weibull distributed with shape $\alpha>0$ and scale $s>0$} and we write $Y \sim$ Weibull$(\alpha,s)$ if the distribution function of $Y$ is given by \begin{align*}
        \Psi_{\alpha,s}(x)= 1-\exp\left(-\left(\frac{x}{s}\right)^{ \alpha}\right), \quad x > 0.
    \end{align*}
\end{definition}

Next we define regular variation in 0, which transforms to regular variation in 1 (or any other point) and at $\infty$ by the usual transformations (see \cite{BGT} for details).

\begin{definition}
    Let $Y$ be a positive random variable with distribution function $F$. Then we call $Y$ or $F$ {\em regularly varying at zero with exponent $\alpha>0$}, if
    \begin{align}\label{rv0} 
    \lim\limits_{t \downarrow 0}\frac{F(tx)}{F(t)}=x^{\alpha}, \quad x>0. 
\end{align}
We abbreviate this by $Y\in  RV_{\alpha}^0$ or $F\in  RV_{\alpha}^0$, respectively.
\end{definition}

In what follows we assume that the random variables $\tilde{\varepsilon}_i:=\ln(\varepsilon_i)>0$ for $i=1,\ldots,d$ are iid regularly varying at zero with exponent $\alpha>0$ and remark in passing that, by  a Taylor expansion of $\ln(\eps)$ at one, this is equivalent to $(\varepsilon-1)\in RV_{\alpha}^0$ or $\varepsilon\in RV_{\alpha}^1$.

Two families of distribution functions such that $\tilde\eps=\ln(\eps)\in RV_{\alpha}^0$ are given in the next example.

\bexam 

(a) \, [Gamma distribution] Let $\tilde\eps$ have density $g(x)=\la^\al e^{-\la x} x^{\al-1}/\Gamma(\al)$ for $x>0$ and parameters $\la>0,\al> 0$. 
Then by a l'Hospital argument, 
$$\lim_{t\downarrow 0} \frac{G(tx)}{G(t)}= \lim_{t\downarrow 0} \frac{e^{-\la tx}t^{\al-1}x^{\al}}{e^{-\la t}t^{\al-1}} = x^{\al}, \quad x>0,$$
which implies that $\tilde\varepsilon \in RV^0_{\alpha}$ and hence $\varepsilon \in RV^1_{\alpha}$. 

(b) \, [Weibull distribution] Let $\tilde\eps$ have density
$g(x)=\alpha s ^{-\alpha} x^{\alpha-1} \exp{(-(x/s)^\alpha)}$ for $x > 0$ and parameters $\alpha>0$ and $s>0$. 
Then again by a l'Hospital argument, 
$$\lim_{t\downarrow 0} \frac{G(tx)}{G(t)} = x^\alpha,\quad x \geq 0,$$
which implies that $\tilde\varepsilon \in RV^0_{\alpha}$ and hence $\varepsilon \in RV^1_{\alpha}$.
 \halmos
\eexam

We first prove that $\ln( U_i/ U_j)-\ln(b_{ji})$ is regularly varying at zero which will be a consequence of Theorem~\ref{epscount}. 
In this auxiliary result as well as in the theorems below we need that $\bC$ is generic. Further, for a path $p$ we denote by $\zeta(p)=\vert S_p\vert$ its path length.

\begin{lemma}\label{ratiorv}
    Let $\bU\in\R^d_+$ be a recursive ML vector with propagating noise on a DAG $\mathcal D$ as defined in \eqref{1stdef_1} and
    assume that the path $p:=[j\to \ldots \to i]$ from $j$ to $i$ is generic. 
 If $\ln(\varepsilon) \in RV_\alpha^0$, then
 $\ln( U_i/ U_j)-\ln(b_{ji}) \in RV_{\zeta(p)\alpha}^0$. 
\end{lemma}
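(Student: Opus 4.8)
The plan is to deduce the statement directly from Theorem~\ref{epscount} via the standard correspondence between regular variation at different base points under the logarithm. First I would recall what Theorem~\ref{epscount} gives us: for the generic path $p=[j=k_0\to\dots\to k_n=i]$ with node set $S_p$ (so $\zeta(p)=|S_p|=n$), we have, as $x\downarrow 1$,
\[
\mathbb P\Big(\frac{U_i}{U_j}\le b_{ji}x\Big)\sim c\,\mathbb P\Big(\prod_{k\in S_p}\varepsilon_k\le x\Big)
\]
for a constant $c\in(0,1)$. Writing $Y:=\ln(U_i/U_j)-\ln(b_{ji})$ and $W:=\sum_{k\in S_p}\tilde\varepsilon_k=\ln\big(\prod_{k\in S_p}\varepsilon_k\big)$, substituting $x=e^t$ turns this into $\mathbb P(Y\le t)\sim c\,\mathbb P(W\le t)$ as $t\downarrow 0$. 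So it suffices to show $W\in RV^0_{n\alpha}$, since the asymptotic proportionality constant $c$ cancels in the ratio $\mathbb P(Y\le ts)/\mathbb P(Y\le t)$.

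Next I would establish $W\in RV^0_{n\alpha}$. By hypothesis $\tilde\varepsilon=\ln(\varepsilon)\in RV^0_\alpha$, i.e.\ $\mathbb P(\tilde\varepsilon\le t)$ is regularly varying at zero with exponent $\alpha$, and the $\tilde\varepsilon_k$ for $k\in S_p$ are iid (there are $n$ of them, all positive). The key fact is that the distribution of a sum of $n$ iid nonnegative random variables, each regularly varying at $0$ with exponent $\alpha$, is itself regularly varying at $0$ with exponent $n\alpha$; this is the ``small-ball'' analogue of the convolution rule for regular variation at infinity and is a consequence of, e.g., the behaviour of Laplace transforms near infinity (a Tauberian theorem: $\mathbb P(\tilde\varepsilon\le t)\sim t^\alpha\ell(1/t)/\Gamma(1+\alpha)\cdot\ell$-type statement corresponds to $\mathbb E e^{-\lambda\tilde\varepsilon}\sim$ a regularly varying function of $\lambda$ at $\infty$, and Laplace transforms multiply over independent sums). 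Concretely, $\mathbb E[e^{-\lambda W}]=\big(\mathbb E[e^{-\lambda\tilde\varepsilon}]\big)^n$, and if $\mathbb E[e^{-\lambda\tilde\varepsilon}]=1-h(\lambda)$ with $h$ regularly varying at $\infty$ with index $-\alpha$, then $\mathbb E[e^{-\lambda W}]\sim 1-n\,h(\lambda)$, which by the Tauberian theorem (Bingham--Goldie--Teugels \cite{BGT}) transfers back to $\mathbb P(W\le t)\sim n\,\mathbb P(\tilde\varepsilon\le t)\cdot(\Gamma(1+\alpha)^{?})$-type equivalence — in any case $\mathbb P(W\le t)$ is regularly varying at zero with exponent $n\alpha$. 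Alternatively, one can argue more elementarily: the lower bound $\mathbb P(W\le t)\ge\mathbb P(\tilde\varepsilon_k\le t/n\ \forall k)=\mathbb P(\tilde\varepsilon\le t/n)^n$ and an upper bound obtained by splitting the simplex $\{\sum x_k\le t\}$ into regions where one coordinate dominates, both of order $t^{n\alpha}\ell(t)$ up to constants, pin down the index; combined with monotonicity of $\mathbb P(W\le\cdot)$ and the uniform convergence theorem for regularly varying functions, this gives $W\in RV^0_{n\alpha}$.

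Finally, combining the two pieces: for $s>0$,
\[
\lim_{t\downarrow 0}\frac{\mathbb P(Y\le ts)}{\mathbb P(Y\le t)}
=\lim_{t\downarrow 0}\frac{c\,\mathbb P(W\le ts)}{c\,\mathbb P(W\le t)}
=\lim_{t\downarrow 0}\frac{\mathbb P(W\le ts)}{\mathbb P(W\le t)}=s^{n\alpha},
\]
where passing the asymptotic equivalence through the ratio is justified because both numerator and denominator tend to $0$ and the $\sim$ relations hold. Since $n=\zeta(p)$, this is exactly $Y\in RV^0_{\zeta(p)\alpha}$, as claimed. The main obstacle is the second step — verifying cleanly that an $n$-fold iid sum of variables in $RV^0_\alpha$ lands in $RV^0_{n\alpha}$, including the case of a genuinely slowly varying factor; the cleanest route is the Laplace-transform/Tauberian argument via \cite{BGT}, and I would cite that rather than reprove it. The first and third steps are routine once Theorem~\ref{epscount} is in hand.
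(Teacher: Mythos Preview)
Your proposal is correct and follows essentially the same route as the paper: apply Theorem~\ref{epscount} to reduce to showing $\sum_{k\in S_p}\tilde\varepsilon_k\in RV^0_{\zeta(p)\alpha}$, and establish the latter via Karamata's Tauberian theorem (the paper isolates this as a separate lemma, proved exactly by the Laplace--Stieltjes argument you outline). One small correction to your heuristic: the Laplace transform $\mathbb E[e^{-\lambda\tilde\varepsilon}]$ is itself regularly varying at infinity with index $-\alpha$ (it tends to $0$, not to $1$), so the product $(\mathbb E[e^{-\lambda\tilde\varepsilon}])^n$ is directly $RV^\infty_{-n\alpha}$ --- your ``$1-h(\lambda)$'' framing is off, though your conclusion and citation are right.
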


The following is the main result of this section and describes the asymptotic distribution of the minimum ratio estimator $\hat \bB$ from \eqref{estimate3}.
In particular, it shows that its entries are asymptotically independent.

\begin{theorem}\label{3.11}
    Let $\bU\in\R^d_+$  be a recursive ML vector with propagating noise as defined in \eqref{1stdef_1}.  Assume that $\bC$ is generic and that  $\tilde \varepsilon=\ln(\varepsilon) \in RV_\alpha^0$. For every path $p_{ji}$ from $j$ to $i$ and node set $S_{p_{ji}}$
choose $a^{(ji)}_n\sim F_{\sum_{k \in S_{p_{ji}}}\tilde \varepsilon_k}^\leftarrow(1/n)$ as $\nto$. If $\bU^1,\ldots,\bU^n$ is an iid sample from $\bU$, then 
    \begin{align*}
        &\lim\limits_{n \to \infty}\mathbb P \left(\frac1{a^{(ji)}_n b_{ji}}\Big(
        \bigwedge\limits_{k=1}^n \frac{U_i^k}{ U_j^k}-b_{ji}\Big)\leq x_{ji}\, \forall  (j,i)\in V \times V\text{ with } b_{ji}>0 \right) \\
        = &\prod\limits_{\substack{(j,i) \in V \times V: \\ b_{ji}>0}}\Psi_{\zeta(p_{ji})\alpha,(c^{(ji)})^{1/(\zeta(p_{ji})\alpha)}}\left(x_{ji}\right), \quad x_{ji}>0,
    \end{align*}
   where $c^{(ji)}\in(0,1)$ is defined as in Theorem~\ref{epscount}.
\end{theorem}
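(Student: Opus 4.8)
The plan is to pass to the logarithmic scale, reduce the assertion to a joint extreme-value limit for the nonnegative variables $Y_{ji}:=\ln(U_i/U_j)-\ln b_{ji}$ with $j\in\an(i)$ (the pairs $j=i$ give $\hat b_{ii}\equiv1$ and contribute trivially, so we omit them), and then combine three ingredients: the univariate left-tail asymptotics of $Y_{ji}$ from Theorem~\ref{epscount}; the fact that a sum of $m$ independent copies of a variable in $RV_\alpha^0$ again lies in $RV_{m\alpha}^0$; and asymptotic independence of the coordinatewise minima, to be read off from the bivariate tail estimate in Theorem~\ref{epscountmult} together with the combinatorics of overlapping critical paths in Lemma~\ref{lemma:crit_rand_path}. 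First I would do the reduction: writing $U_i^k/U_j^k=b_{ji}e^{Y_{ji}^k}$ with $Y_{ji}^k$ iid copies of $Y_{ji}$, one has $\bigwedge_{k=1}^nU_i^k/U_j^k=b_{ji}\exp(\bigwedge_{k=1}^nY_{ji}^k)$, so with $t_{ji}^{(n)}:=\ln(1+a_n^{(ji)}x_{ji})\sim a_n^{(ji)}x_{ji}$ (as $a_n^{(ji)}\to0$),
\[
\Big\{\tfrac1{a_n^{(ji)}b_{ji}}\big({\textstyle\bigwedge_{k=1}^nU_i^k/U_j^k}-b_{ji}\big)>x_{ji}\ \forall (j,i)\Big\}=\bigcap_{(j,i)}\Big\{{\textstyle\bigwedge_{k=1}^nY_{ji}^k}>t_{ji}^{(n)}\Big\}=\bigcap_{k=1}^n\bigcap_{(j,i)}\{Y_{ji}^k>t_{ji}^{(n)}\}.
\]
Since joint convergence in distribution of the normalized estimators to a vector of independent Weibull variables is equivalent to convergence of all partial upper-orthant probabilities, and by independence of the sample $\mathbb{P}(\bigcap_k\bigcap_{(j,i)}\{Y_{ji}^k>t_{ji}^{(n)}\})=(1-\mathbb{P}(\bigcup_{(j,i)}\{Y_{ji}\le t_{ji}^{(n)}\}))^n$, the problem reduces, via Bonferroni, to the marginal and pairwise tail estimates described next; replacing $t_{ji}^{(n)}$ by $a_n^{(ji)}x_{ji}$ is then a routine convergence-together step.

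For the marginals, fix $j\in\an(i)$. Genericity of $\bC$ makes $p_{ji}$ the unique critical path from $j$ to $i$, so Theorem~\ref{epscount} gives, with $T_{ji}:=\sum_{k\in S_{p_{ji}}}\tilde\varepsilon_k$,
\[
\mathbb{P}(Y_{ji}\le y)=\mathbb{P}(U_i/U_j\le b_{ji}e^{y})\sim c^{(ji)}\,\mathbb{P}(T_{ji}\le y),\qquad y\downarrow0.
\]
By the convolution closure of regular variation at zero (see \cite{BGT}), $\mathbb{P}(T_{ji}\le\cdot)\in RV_{\zeta(p_{ji})\alpha}^0$; this recovers Lemma~\ref{ratiorv}. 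Because $a_n^{(ji)}\sim F_{T_{ji}}^\leftarrow(1/n)\to0$, we have $n\,\mathbb{P}(T_{ji}\le a_n^{(ji)})\to1$ and hence $n\,\mathbb{P}(Y_{ji}\le a_n^{(ji)}x)\to c^{(ji)}x^{\zeta(p_{ji})\alpha}$ for all $x>0$ --- exactly the input needed for the classical minimum-of-iid argument to produce the marginal Weibull limit.

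The crux --- the step I expect to be the main obstacle --- is asymptotic independence of the coordinates, i.e.\ $n\,\mathbb{P}(Y_{ji}\le a_n^{(ji)}x_1,\,Y_{lm}\le a_n^{(lm)}x_2)\to0$ for any two distinct pairs $(j,i)\neq(l,m)$ with $j\in\an(i)$, $l\in\an(m)$ (by Bonferroni only these pairwise terms are needed). I would split on whether \eqref{biv:posscrit} holds. If it does, Theorem~\ref{epscountmult} replaces the joint probability by $c\,\mathbb{P}(T_{ji}\le a_n^{(ji)}x_1,\,T_{lm}\le a_n^{(lm)}x_2)$ with $T_{lm}:=\sum_{k\in S_{p_{lm}}}\tilde\varepsilon_k$; a short combinatorial argument using Lemma~\ref{lemma:crit_rand_path}~b) and the existence of a topological order shows that $S_{p_{ji}}$ and $S_{p_{lm}}$ cannot coincide, so after relabeling $S_{p_{lm}}\setminus S_{p_{ji}}\neq\emptyset$, and bounding the joint event by the two independent events $\{\sum_{k\in S_{p_{ji}}}\tilde\varepsilon_k\le a_n^{(ji)}x_1\}$ and $\{\sum_{k\in S_{p_{lm}}\setminus S_{p_{ji}}}\tilde\varepsilon_k\le a_n^{(lm)}x_2\}$ makes the first factor $O(1/n)$ (by the marginal estimate) and the second $o(1)$ (the left tail at $0$ of a nondegenerate variable, at $a_n^{(lm)}x_2\to0$); hence $o(1/n)$. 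If \eqref{biv:posscrit} fails, then $S_{p_{ji}}\cap S_{p_{lm}}\neq\emptyset$ (disjoint node sets would force positivity, again by Lemma~\ref{lemma:crit_rand_path}~b)), and the two conflicting near-critical realizations force, besides smallness of the noise along the common nodes, an extra configuration of innovations and noise into an event whose probability tends to $0$ as $x_1,x_2\downarrow1$ --- the same kind of estimate that underlies the proof of Theorem~\ref{epscountmult}. I expect this failure case, and making the vanishing factors uniform over the finitely many pairs, to be the technical heart.

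Finally, feeding the marginal and pairwise bounds into Bonferroni gives $n\,\mathbb{P}(\bigcup_{(j,i)}\{Y_{ji}\le a_n^{(ji)}x_{ji}\})\to\sum_{(j,i)}c^{(ji)}x_{ji}^{\zeta(p_{ji})\alpha}$, whence
\[
\mathbb{P}\Big(\bigcap_{k=1}^n\bigcap_{(j,i)}\{Y_{ji}^k>t_{ji}^{(n)}\}\Big)=\Big(1-\mathbb{P}\big({\textstyle\bigcup_{(j,i)}\{Y_{ji}\le t_{ji}^{(n)}\}}\big)\Big)^n\longrightarrow\exp\Big(-\sum_{(j,i)}c^{(ji)}x_{ji}^{\zeta(p_{ji})\alpha}\Big)=\prod_{(j,i)}e^{-c^{(ji)}x_{ji}^{\zeta(p_{ji})\alpha}}.
\]
Each factor $e^{-c^{(ji)}x^{\zeta(p_{ji})\alpha}}$ is the survival function of a Weibull variable with shape $\zeta(p_{ji})\alpha$, its scale being fixed by $c^{(ji)}$ through Definition~\ref{defn:5.1}; combined with the reduction of the first paragraph and the observation that the identical computation restricted to any sub-collection of pairs yields the corresponding joint distribution function, this gives the stated product-of-Weibulls limit.
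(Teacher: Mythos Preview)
Your overall architecture matches the paper's: reduce to survival probabilities of the normalized minima, establish the marginal rates via Theorem~\ref{epscount} (the paper packages this as Proposition~\ref{3.9}), show the pairwise joint left tails are $o(1/n)$, and conclude via $(1-\cdot)^n\to e^{-\cdot}$. The passage to the log-scale and the Bonferroni bookkeeping are cosmetic; the paper works directly with $U_i/U_j-b_{ji}$ and with inclusion--exclusion on two events, which amounts to the same thing.

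The substantive divergence is in the pairwise step. The paper does not touch Theorem~\ref{epscountmult} at all. It uses the one-line deterministic inclusion
\[
\Big\{\frac{U_i}{U_j}\le b_{ji}x\Big\}\ \subseteq\ \Big\{\prod_{k\in S_{p_{ji}}}\varepsilon_k\le x\Big\},
\]
which is immediate from $U_i/U_j\ge \bar b_{ji}/\bar b_{jj}\ge b_{ji}\prod_{k\in S_{p_{ji}}}\varepsilon_k$ (Lemma~\ref{lemma1}~c) together with \eqref{randompathweight}; cf.\ \eqref{eq:Ueps} in the proof of Theorem~\ref{epscount}). This gives $\mathbb P(Y_{ji}\le y_1,\,Y_{lm}\le y_2)\le \mathbb P(T_{ji}\le y_1,\,T_{lm}\le y_2)$ unconditionally, after which your factoring step (drop the common noise variables from one side) finishes. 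So your case-split on whether \eqref{biv:posscrit} holds is, from the paper's point of view, an unnecessary detour through heavier machinery.

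That said, your case-split is not idle. The factoring step requires $S_{p_{ji}}\ne S_{p_{lm}}$; the paper simply asserts this follows from $(j,i)\ne(l,m)$, but that is false in general (take two distinct parents $j\ne l$ of a common child $i=m$: then $S_{p_{ji}}=S_{p_{lm}}=\{i\}$, and the noise bound alone gives only $O(1/n)$). Your route via Lemma~\ref{lemma:crit_rand_path}~b) correctly proves $S_{p_{ji}}\ne S_{p_{lm}}$ whenever \eqref{biv:posscrit} holds, and pushes the equal-node-set situation into your ``failure'' branch, where the heuristic you sketch (an additional continuous constraint on the innovations, e.g.\ $c_{ji}U_j/(c_{li}U_l)$ confined to a shrinking interval around $1$) is exactly what is needed and supplies the missing $o(1)$ factor. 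In short: the paper's argument is shorter because it replaces Theorem~\ref{epscountmult} by the elementary inclusion above, but your more elaborate route is more careful about an edge case the paper glosses over.
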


If we know the minimum ML DAG $\mathcal D^B=(V, E(\D^B))$, it is preferable to estimate $b_{ji}$ as in \eqref{estimate1}. Then Theorem~\ref{3.11} reduces as follows.

\begin{corollary}
Let the assumptions of Theorem~\ref{3.11} hold and assume that the minimum ML DAG $\D^B(V, E(\D^B))$ is known. Then 
    \begin{align*}
        &\lim\limits_{n \to \infty}\mathbb P \left(\frac1{a^{ji}_n b_{ji}}\Big(
        \bigwedge\limits_{k=1}^n \frac{U_i^k}{ U_j^k}-b_{ji}\Big)\leq x_{ji}\, \forall  (j,i) \in  E(\mathcal D^B) \right)\\
        = &\prod\limits_{(j,i) \in E(\D^B)} \Psi_{\alpha,(c^{(ji)})^{1/\alpha}}\left(x_{ji}\right),\quad x_{ji}>0.
    \end{align*}
\end{corollary}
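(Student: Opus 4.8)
The plan is to obtain this corollary as a direct specialisation of Theorem~\ref{3.11}, using two observations: on the edges of $\mathcal D^B$ the idempotent estimator \eqref{estimate1} coincides with the plain minimum ratio $\bigwedge_{k=1}^n U_i^k/U_j^k$, and for such an edge the generic critical path is the single edge itself, so its path length $\zeta(p_{ji})$ equals $1$.

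First I would verify that for every $(j,i)\in E(\mathcal D^B)$ the edge $[j\to i]$ is the (by genericity of $\bC$ unique) non-random critical path from $j$ to $i$. Every path $p\in P_{ji}$ with $|S_p|\ge 2$ has a last intermediate node $k$, necessarily in $\de(j)\cap\pa(i)$, and then $d_{ji}(p)\le b_{jk}b_{ki}=b_{jk}b_{ki}/b_{kk}$ (recall $b_{kk}=1$); hence $\bigvee_{p\in P_{ji},\,|S_p|\ge 2}d_{ji}(p)\le\bigvee_{k\in\de(j)\cap\pa(i)}b_{jk}b_{ki}/b_{kk}<b_{ji}$, the strict inequality holding precisely because $(j,i)\in E(\mathcal D^B)$. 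Since $b_{ji}=\bigvee_{p\in P_{ji}}d_{ji}(p)$ and the edge $[j\to i]$ lies in $E(\mathcal D^B)\subseteq E(\mathcal D)$, this forces $d_{ji}([j\to i])=c_{ji}=b_{ji}$; genericity then makes $[j\to i]$ the only critical path, so $S_{p_{ji}}=\{i\}$ and $\zeta(p_{ji})=1$. Consequently, in Theorem~\ref{3.11} the scaling becomes $a^{(ji)}_n\sim F^{\leftarrow}_{\tilde\varepsilon_i}(1/n)$ and the limiting shape parameter $\zeta(p_{ji})\alpha$ collapses to $\alpha$.

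Next I would show $\hat b_{ji}=\bigwedge_{k=1}^n U_i^k/U_j^k$ for $(j,i)\in E(\mathcal D^B)$, where $\hat\bB=(\bI_d\vee\bB_0)^{\odot(d-1)}$ is the estimator \eqref{estimate1} built from the parental structure of the known DAG $\mathcal D^B$. Expanding $\hat\bB$ along paths gives $\hat b_{ji}=\bigvee_{p\in P_{ji}}\prod_{(k,l)\in p}\big(\bigwedge_m U_l^m/U_k^m\big)$ over paths in $\mathcal D^B$; by the telescoping bound $\prod_{(k,l)\in p}\bigwedge_m U_l^m/U_k^m\le\bigwedge_m\prod_{(k,l)\in p}U_l^m/U_k^m=\bigwedge_m U_i^m/U_j^m$ we get $\hat b_{ji}\le\bigwedge_m U_i^m/U_j^m$, while the single-edge path $[j\to i]\in P_{ji}$ gives the reverse inequality. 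Thus, for $(j,i)\in E(\mathcal D^B)\subseteq\{(j,i):b_{ji}>0\}$, the coordinate $\frac1{a^{(ji)}_n b_{ji}}(\hat b_{ji}-b_{ji})$ is exactly the coordinate appearing in Theorem~\ref{3.11}.

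Finally, Theorem~\ref{3.11} furnishes joint convergence of the family $\{\frac1{a^{(ji)}_n b_{ji}}(\bigwedge_{k=1}^n U_i^k/U_j^k-b_{ji}):b_{ji}>0\}$ to a vector with \emph{independent} Weibull coordinates; restricting this product limit law to the sub-family indexed by $E(\mathcal D^B)$ simply deletes the remaining factors, and by the first step each surviving factor is $\Psi_{\alpha,(c^{(ji)})^{1/\alpha}}$, which is the claim. I do not expect a genuine obstacle here; the one point that needs care is the estimator identification of the third step, i.e. making Theorem~\ref{3.11} literally applicable to \eqref{estimate1} rather than \eqref{estimate3}, and this is exactly where the definition of the minimum ML DAG $\mathcal D^B$ and the genericity of $\bC$ are used.
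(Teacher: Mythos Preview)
Your proposal is correct and matches the paper's intent: the corollary is stated without proof as an immediate specialisation of Theorem~\ref{3.11}, the only content being that for $(j,i)\in E(\mathcal D^B)$ the generic critical path is the edge itself, so $\zeta(p_{ji})=1$ and the Weibull shape parameter collapses from $\zeta(p_{ji})\alpha$ to $\alpha$. Your second step, the estimator identification $\hat b_{ji}=\bigwedge_{k=1}^n U_i^k/U_j^k$ for edges of $\mathcal D^B$, is a true and useful remark motivated by the sentence preceding the corollary, but it is not actually needed to prove the corollary as stated, since both Theorem~\ref{3.11} and the corollary are formulated directly in terms of the raw minimum ratios rather than the idempotent estimator \eqref{estimate1}.
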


\section{Data analysis and simulation study}\label{s6}

We want to apply the methods that we have developed over the past sections and consider a data example. For a quality assessment we also perform a simulation study.

\subsection{Data example}

We consider dietary supplement data of $n=8327$ independent patients taken from a dietary interview from the NHANES report for the year 2015-2016, which is available at \href{https://wwwn.cdc.gov/Nchs/Nhanes/2015-2016/DR1TOT\_I.XPT}{https://wwwn.cdc.gov/Nchs/Nhanes/2015-2016/DR1TOT\_I.XPT}. 
The data contains 168 food components with the object of estimating the total intake of calories, macro and micro nutrients from foods and beverages consumed a day prior to the interview. More details can be found on the website.

In \cite{janssen2019kmeans}, the data set has been considered in terms of an adapted $k$-means clustering algorithm for extremal observations. Moreover, assuming a recursive ML model and standardising the marginal data to regular variation at $\infty$ with $\alpha=2$, \cite{krali} investigated the causal relationship between four nutrients using a different estimation method based on scalings.  

In our data example we consider the same four nutrients, namely vitamin A (DR1TVARA), $\alpha$-carotene (DR1TACAR), $\beta$-carotene (DR1TBCAR) and lutein+zeaxanthin (DR1TLZ) as in \cite{krali}. We abbreviate them by VA, AC, BC and LZ. In order to make results comparable to those of \cite{krali}, we also use the empirical integral transform to standardize the data to Fr\'echet(2) margins (see e.g. \cite{beirlant},  p. 381) by setting for $i=1,2,3,4$,
\begin{align*}
    U_{li}:=\Big(-\log\Big(\frac 1 {n+1}\sum_{j=1}^n\bone_{\{\bar U_{ji}\leq \bar U_{li}\}}\Big)\Big)^{-1/2},\quad l=1,\ldots,n=8327,
\end{align*}
where multiple ranks are uniformly randomly ordered.

We first consider the full matrix of minimum ratios $\check \bB= (\check{b}_{ij})_{d\times d}$ with $\check{b}_{ij}=\bigwedge_{t=1}^n (X^t_j/X^t_i)$ given by
  \[
\begin{blockarray}{ccccc}
{\rm VA} & AC & BC & LZ \\
\begin{block}{(cccc)c}
  1 & 0.014 & 0.011 & 0.007 & VA \\
  0.146 & 1 & 0.177 & 0.019 & AC \\
  0.321 & 0.010 & 1 & 0.025 & BC \\
  0.132 & 0.007 & 0.168 &  1 & LZ \\
\end{block}
\end{blockarray}
 \] 
We next apply Algorithm~\ref{alg1} to obtain an estimated topological order $\hat \pi:=(AC, LZ, BC, VA)$. 
First we want to assess the quality of the estimated topological order $\hat \pi$, 
which also supports or contradicts the model assumption of a Bayesian network.
Motivated by the coefficient $R^2$ of determination in regression we define the following.

\begin{definition}\label{mlcoeff}
For a given topological order $\pi$ and an estimator $\check{B}$ of the ML coefficient matrix we define the \emph{ML coefficient of determination}  
\begin{align*}
  R_{\max}( \pi)=\frac{ \sum\limits_{\substack{(j,i) \in V \times V: \\  \pi(j)<  \pi(i)}}\check{b}_{ji}}{\sum\limits_{\substack{(j,i) \in V \times V: \\ j \neq i}}\check{b}_{ji}}.
\end{align*}
\end{definition}

The coefficient $R_{\max}( \pi)$ can take any value in the interval $[0,1]$. Large $R_{\max}( \pi)$ supports the hypothesis that the underlying graph is a DAG and the estimated topological order lies in the equivalence class of topological orders defined in \eqref{equivalence}. 

In our data example, we have  $R_{\max}(\hat \pi)=0.929$, strongly supporting the hypothesis of a recursive ML model.
Now using the estimator $\eqref{estimate3}$, and applying Algorithms~\ref{alg2} and \ref{alg3} with $\delta_1=0.02$ and $\delta_2(k)=0.02$ for $k \in\{1,2,3\}$, we get the estimated minimum ML DAG $\mathcal D^{\hat B}$ and ML coefficient matrix $\hat \bB$, where we sorted the matrix according to $\hat \pi$. These are shown in Figure \ref{est_dag}.

{
\makeatletter
\def\@captype{figure}
\makeatother
\begin{wrapfigure}{l}[10pt]{7.5cm}
\centering
    \begin{tikzpicture}[->,every node/.style={circle,draw},line width=1pt, node distance=1.7cm]
        \node (1)  {AC};
        \node (2) [below right of=1] {LZ};
        \node (3) [below left of=1] {BC};
        \foreach \from/\to in {2/3,1/3}
        \draw (\from) -- (\to);   
        \node (4) [below left of=2] {VA}; 
        \foreach \from/\to in {1/4,2/4,3/4}
        \draw (\from) -- (\to);   
        \path[every node/.style={font=\small}];
    \end{tikzpicture} 
\end{wrapfigure}
\begin{align} \label{estKlenneStar}
       \hat \bB = \quad
\begin{blockarray}{ccccc}
{\rm AC} & {\rm LZ} & {\rm BC} & {\rm VA} \\
\begin{block}{(cccc)c}
  1 & 0 & 0.177 & 0.146 & {\rm AC} \\
  0 & 1 & 0.168 & 0.132 & {\rm LZ} \\
  0 & 0 & 1 & 0.321 & {\rm BC} \\
  0 & 0 & 0 &  1 & {\rm VA} \\
\end{block}
\end{blockarray}
 \end{align}
 \caption{Estimated minimum ML DAG $\mathcal D^{\hat B}$ with estimated ML coefficient matrix $\hat B$.}\label{est_dag}
}

\noindent
 Observe that, since we estimate the edge from AC to LZ to be absent, there are two possible topological orders. 

From the estimates we observe that both, $\alpha$-carotene and $\beta$-carotene lead to high amounts of vitamin A. This is in line with our expectation since $\beta$-carotene is a precursor to vitamin A and can be converted by $\beta$-carotene 15,15'-monoxygenase by many animals including humans. Similarly, also $\alpha$-carotene can be converted to vitamin A. However, it is only half as active as $\beta$-carotene which explains that the edge weight from $\alpha$-carotene to vitamin A is approximately half compared to the edge weight from $\beta$-carotene to vitamin A (0.146 compared to 0.321). 
Moreover, we can see that high amounts of lutein+zeaxanthin also lead to high amounts of $\beta$-carotene and high amounts of $\alpha$-carotene also lead to high amounts of $\beta$-carotene. However, we did not find a significant connection between $\alpha$-carotene and lutein+zeaxanthin. Observe that \cite{krali} inferred the same topological order, yet with one additional edge from $\alpha$-carotene to lutein+zeaxanthin. However, it is also the edge with the smallest estimated edge weight. Similarly as in \cite{krali}, we plot bivariate extremes in Figure~\ref{bivariateplots} to underline our finding. 
The first 5 plots in Figure~\ref{bivariateplots} look rather similar. For every large value of the substance on the vertical axis, we can see a large value of the substance on the horizontal axis. Moreover, these observations are shaped closely to a line. In contrast, a large value of the substance on the horizontal axis might as well coincide with a small value of the substance on the vertical axis. Therefore,  e.g. a high amount of $\alpha$-carotene leads to a high amount of vitamin A but a high amount of vitamin A does not necessarily lead to a high amount of $\alpha$-carotene. This also supports that the dependence is not mutual and hence we can model it by a DAG. The same can be seen for any pair given in the plots 1-5. Moreover, since parts of the observations are shaped closely along a line, which we would expect for a recursive ML model, we can conclude that the recursive ML model fits the data very well. 

The sixth plot is different from the other 5 plots, since for most large observations of $\alpha$-carotene the level of lutein+zeaxanthin is not increased as most large observations in lutein+zeaxanthin do also not result in a high level of $\alpha$-carotene. Therefore, the two substances do not seem to affect each other and we rightly concluded that there is no edge.

\begin{figure}
    \centering
    \begin{subfigure}[b]{\textwidth}
        \centering
        \includegraphics[width=0.475\linewidth]{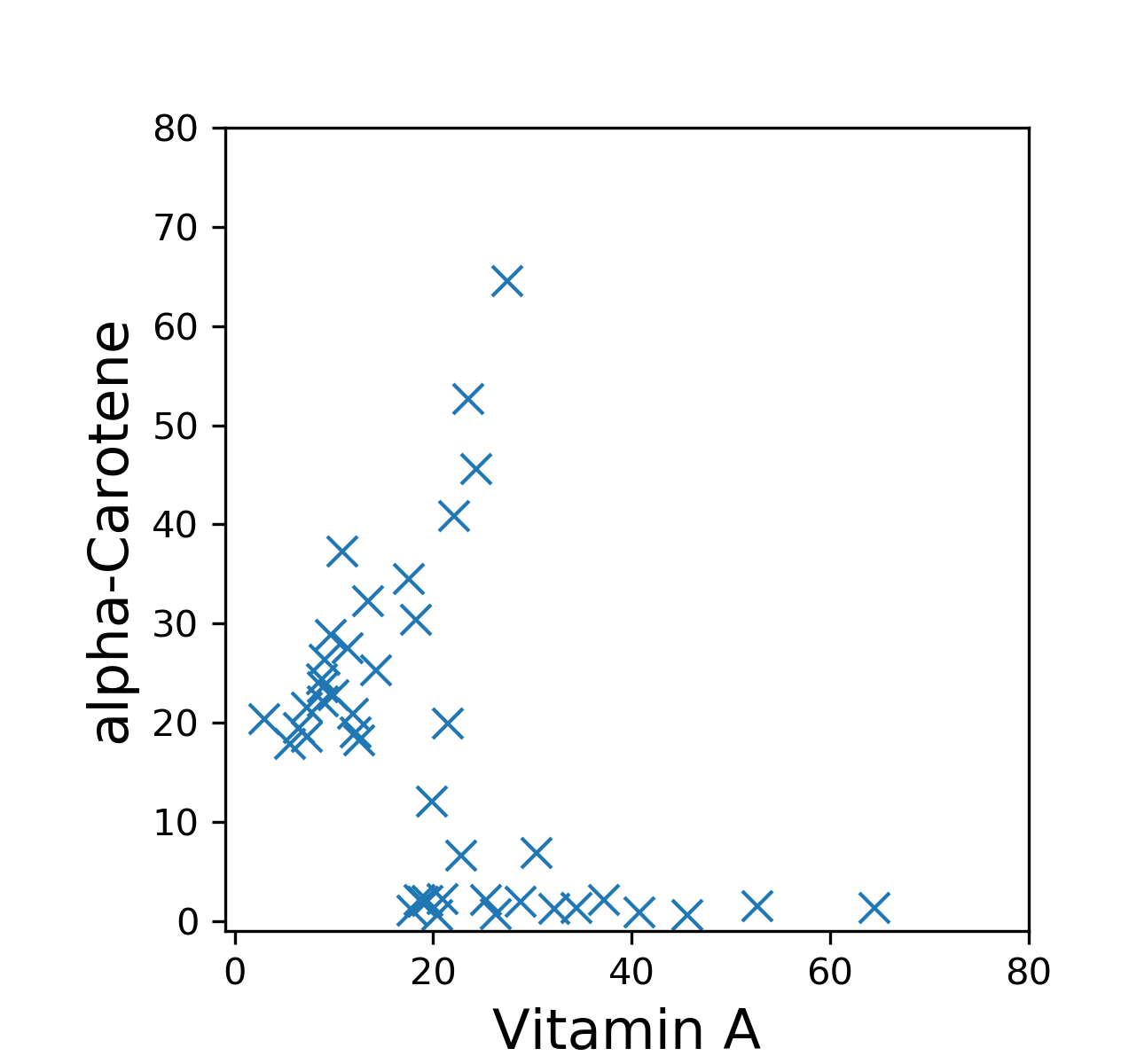}%
        \hfill
        \includegraphics[width=0.475\linewidth]{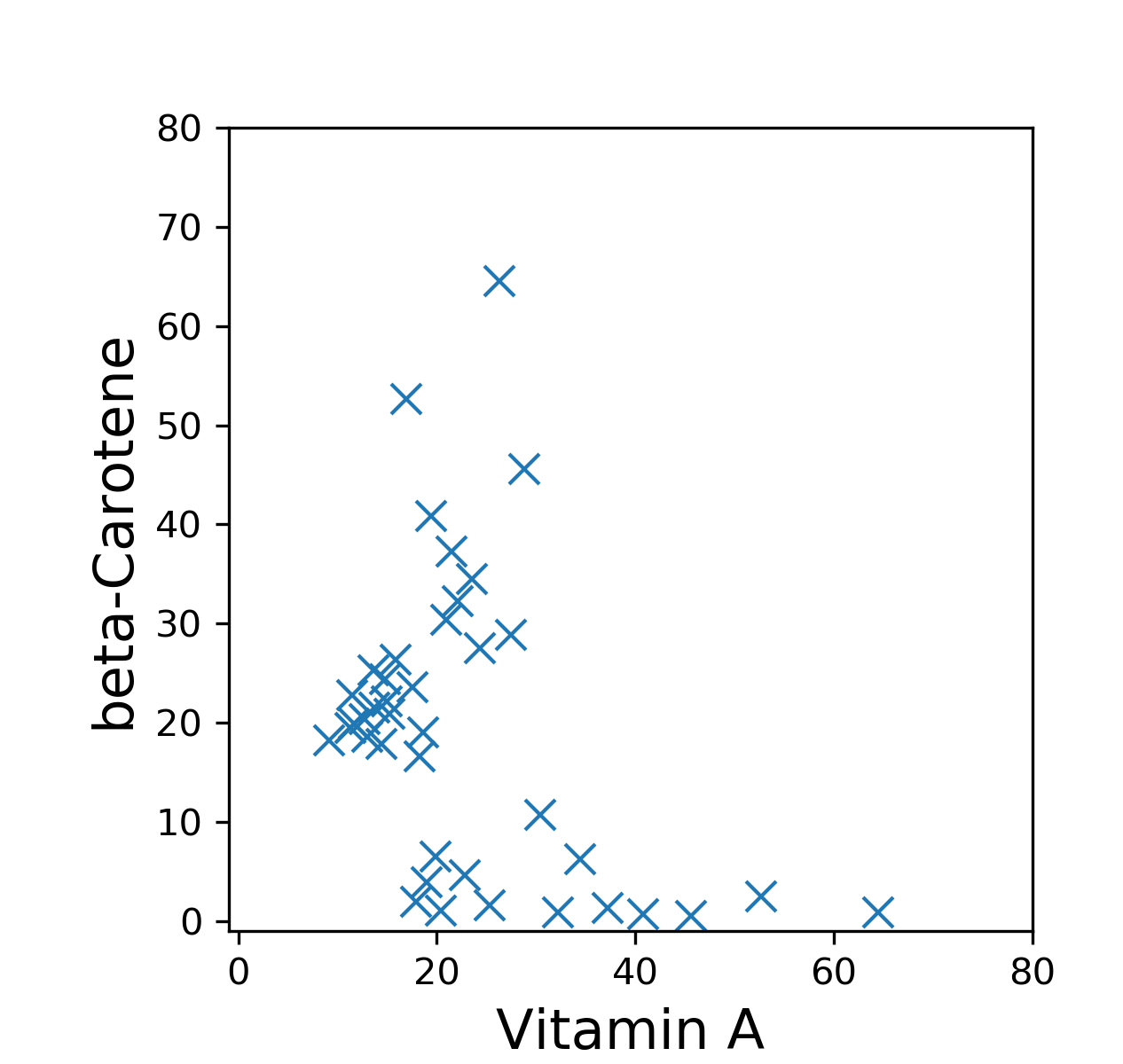}
    \end{subfigure}
    \vspace{0.2cm}
    \begin{subfigure}[b]{\textwidth}
        \centering
        \includegraphics[width=0.475\linewidth]{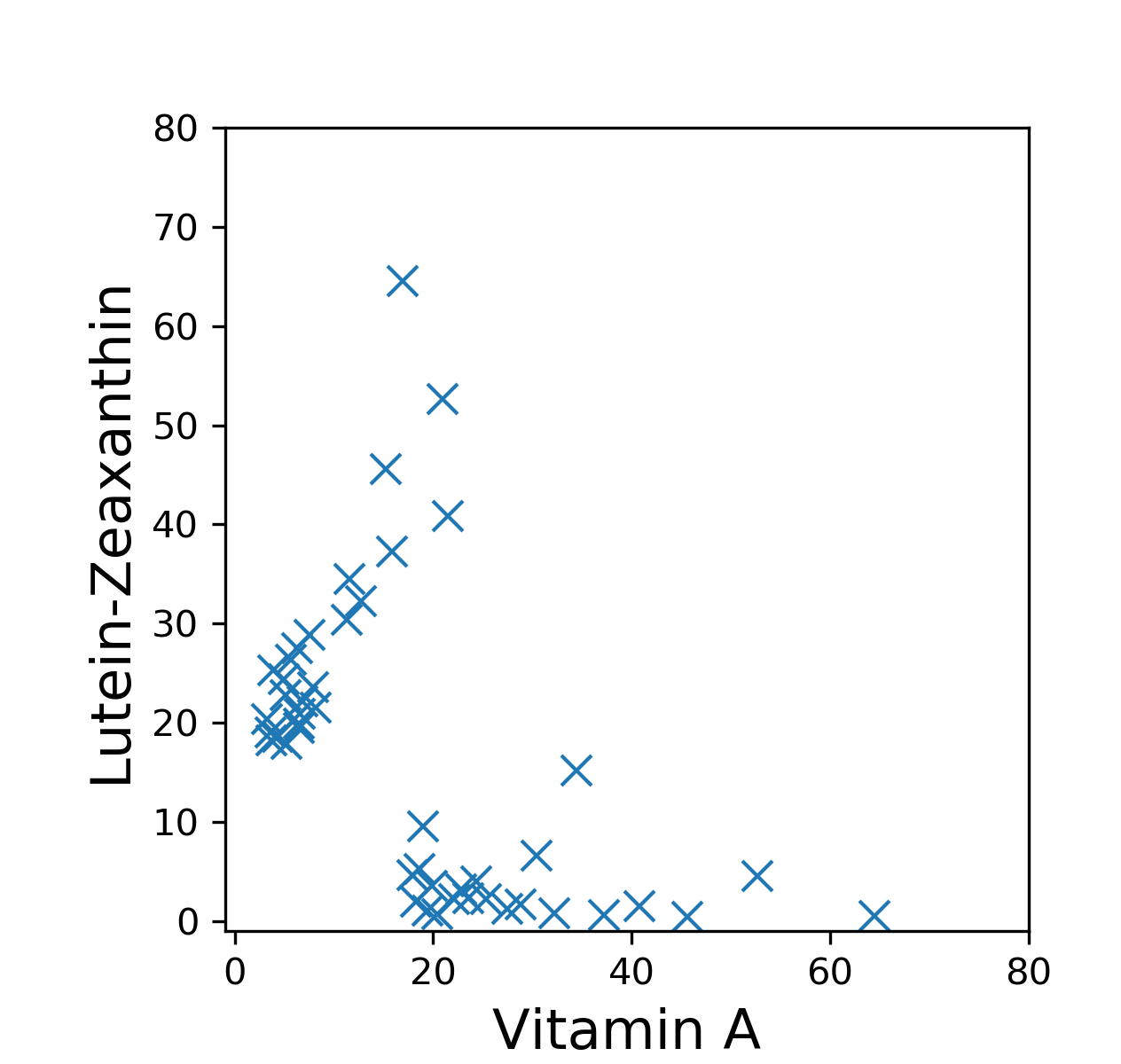}%
        \hfill
        \includegraphics[width=0.475\linewidth]{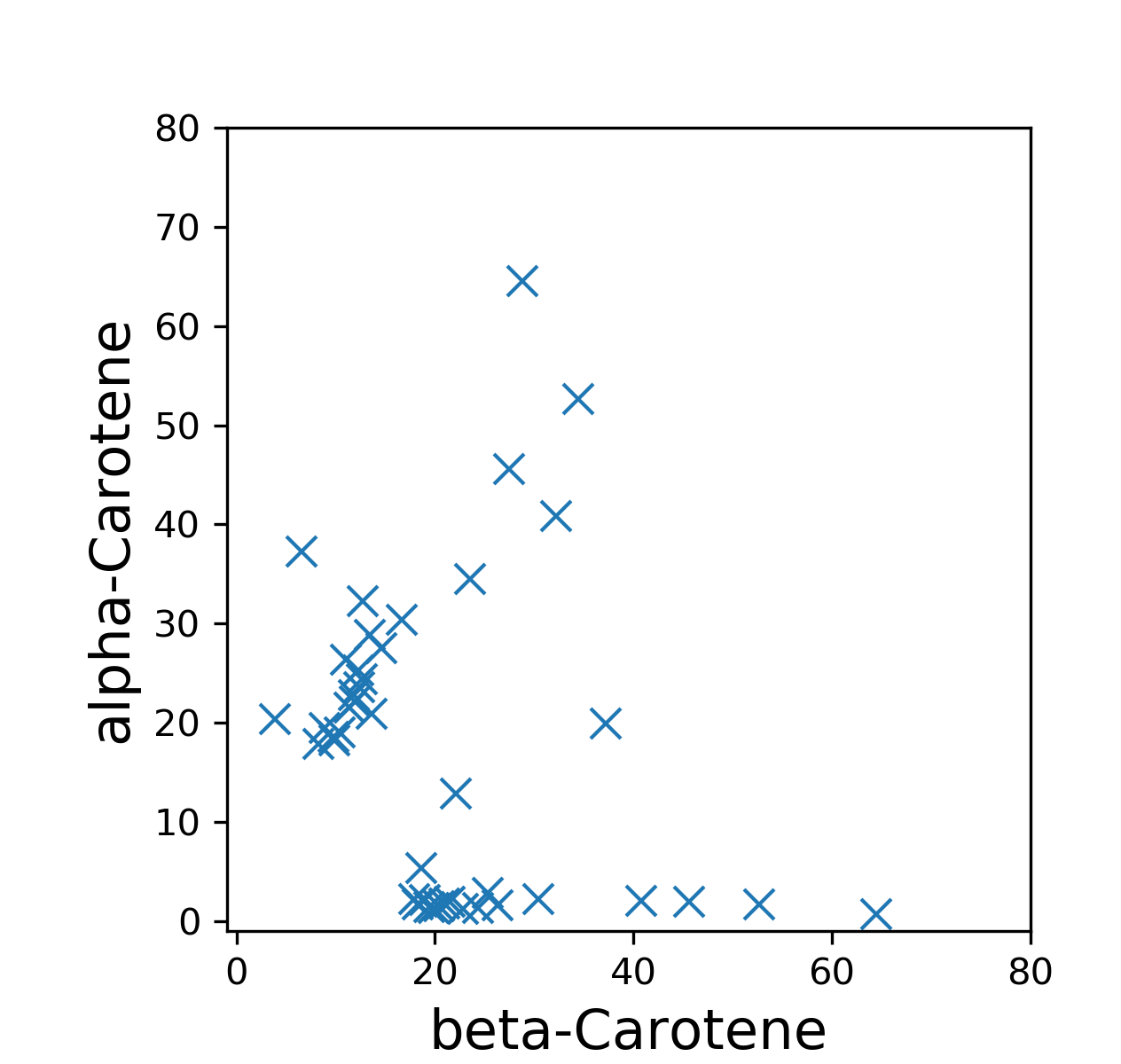}
    \end{subfigure}
    \vspace{0.2cm}
    \begin{subfigure}[b]{\textwidth}
        \centering
        \includegraphics[width=0.475\linewidth]{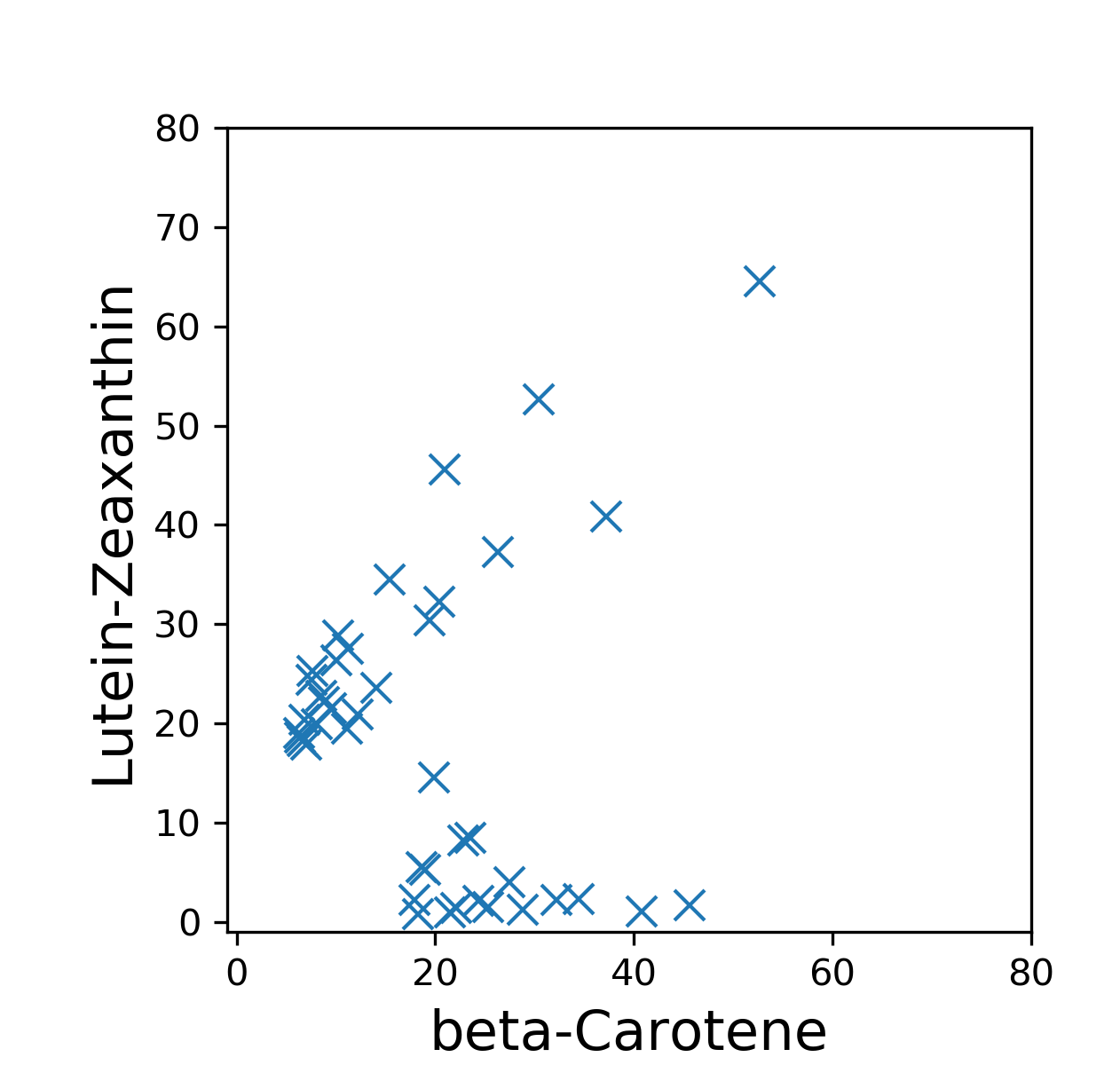}%
        \hfill
        \includegraphics[width=0.475\linewidth]{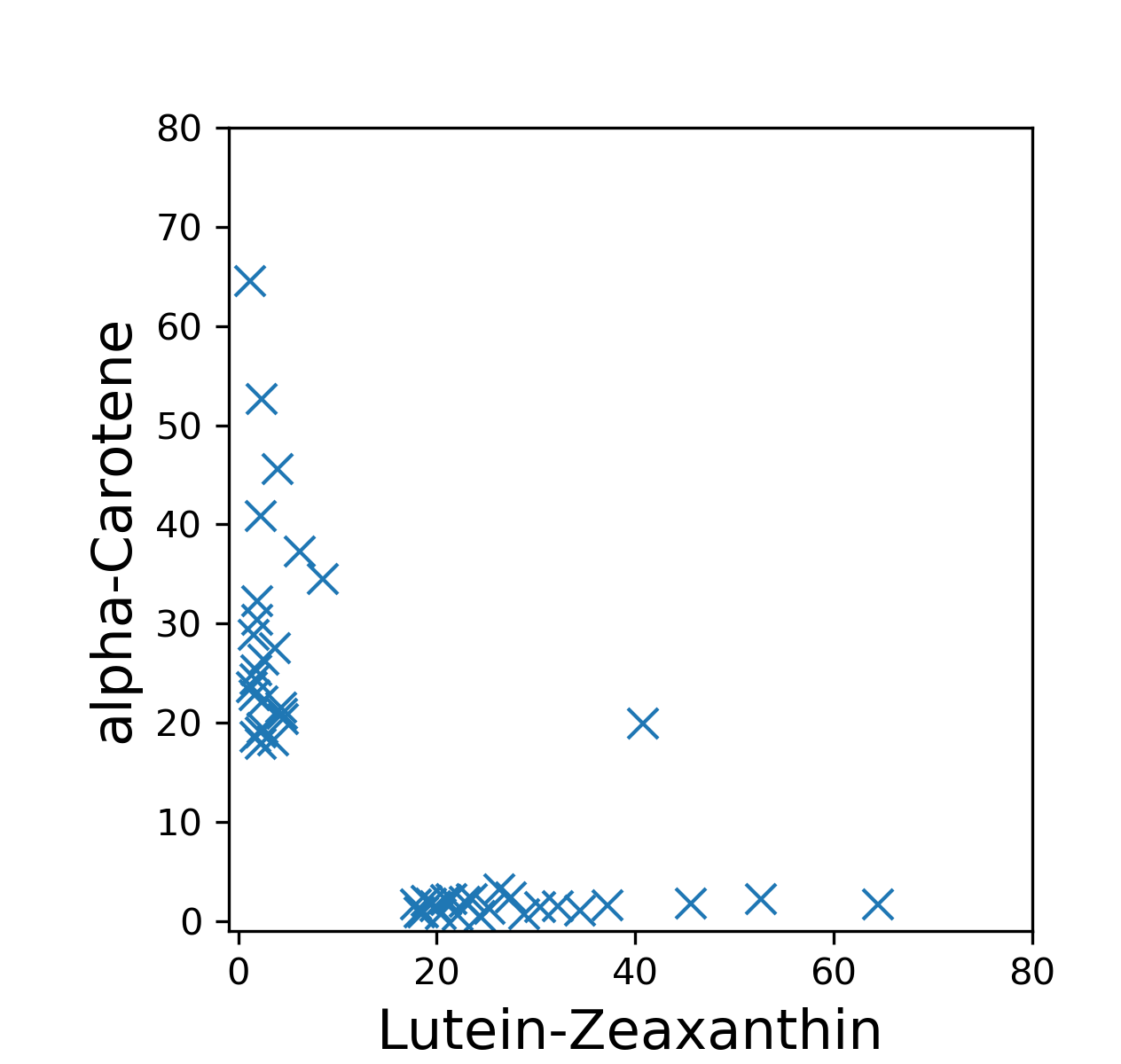}
    \end{subfigure}
    \caption{The empirical bivariate extremes (25 largest observations).} \label{bivariateplots}
\end{figure}

\subsection{Simulation study}

We want to illustrate the effect of observational noise in the ML model. We simulate recursive ML vectors with propagating noise, where the innovations $Z_1,\dots,Z_4$ are Fr\'echet(2) distributed and we use the estimated $\hat \bB$ from \eqref{estKlenneStar} from the data analysis above for the ML coefficient matrix $\bB$. Moreover, we simulate three different scenarios. In the first scenario, we assume the non-noisy model as given in \eqref{nonnoise}, while for the second scenario we choose the propagating noise model with a medium sized noise and in the third setting we choose a noise variable which is stochastically larger. The scenarios are given as follows:
\begin{enumerate}
    \item[(1)] No noise
    \item[(2)] $\ln(\varepsilon_i)\sim\text{Gamma}(\lambda=1, \alpha=2)$ for $i \in \{1,2,3,4\}$, which corresponds to $\mathbb E( \varepsilon_i) =2$
    \item[(3)] $\ln(\varepsilon_i)\sim\text{Gamma}(\lambda=2, \alpha=2)$ for $i \in \{1,2,3,4\}$, which corresponds to $\mathbb E (\varepsilon_i) =4$
\end{enumerate}
We assume to have no information on the underlying DAG and we only consider the quality of the estimator $\check b_{ji}$ given in  \eqref{minratios}. We choose the sample sizes $n \in \{50, 200, 500, 1000\}$ and 1000 simulation runs for each sample size. 
We first assess the success probabilities for Algorithm~\ref{alg1}. 
Table~\ref{table:1} shows that the topological order can be correctly estimated even for small sample sizes. Moreover, the number of correct runs increases for larger noise variables. This is expected since the noise variables are one-sided. Therefore, for a path $p$ from $j$ to $i$ the ratio  $U_i/U_j \geq d_{ji}(p)\prod_{k\in S_p}\eps_k$ increases, while the ratio $U_j/U_i \leq 1/(d_{ji}(p)\prod_{k\in S_p}\eps_k)$ decreases. Therefore, it is easier to identify the paths in $\mathcal D$ for larger noise. 

\begin{table}
\centering
\begin{tabular}{|c c c c|} 
 \hline
 Sample Size & Correct Runs(1) & Correct Runs(2) & Correct Runs(3)  \\ [0.5ex] 
 \hline
 50 & 555 & 946 & 998 \\ 
 200 & 995 & 1000 & 1000 \\
 500 & 1000 & 1000 & 1000 \\
 1000 & 1000 & 1000 & 1000 \\ [1ex] 
 \hline
\end{tabular}
\caption{Empirical success probability for estimated topolgical order being in the equivalence class of topological orders for (1) No noise, (2): Gamma(1,2), (3); Gamma(2,2).}
\label{table:1}
\end{table}

Next, we want to assess the quality of the estimated ML coefficient matrix $\check \bB$. To do so, for every pair $(j,i)$ with $b_{ji}>0$ and every simulation run $k \in \{1, \ldots,1000\}$, we denote the minimum ratio estimator given in \eqref{minratios} by $\check b_{ji}^k$.

We consider the empirical RMSE, standard deviation and bias for each $b_{ji}>0$ in each model (1)-(3). 
In what follows we compare the three classical quantities
\begin{align}
     {\text{bias}}(\check b_{ji})&:=\frac1{1000}\sum\limits_{k=1}^{1000} \check b^k_{ji}-b_{ji}, \label{bias}\\
      {\text{SD}}(\check b_{ji})& := \sqrt{\frac1{1000} \sum\limits_{k=1}^{1000} (\check b^k_{ji}-\ov {\check b}_{ji})^2}\quad\mbox{with}\quad \ov {\check b}_{ji} = \frac1{1000}\sum_{k=1}^{1000} \check b^k_{ji}, \label{SD}\\
    {\text{RMSE}}(\check b_{ji})& :=\sqrt{\frac1{1000} \sum\limits_{k=1}^{1000} (\check b^k_{ji}-b_{ji})^2}, \label{RMSE}
\end{align}

All three quantities are comparatively small even for small sample sizes and decrease whenever the sample size increases. 
Moreover, they are larger in the propagating noise model and larger noise terms also increase the three quantities.
This is in line with what we can expect from the model as noise terms increase the ratios $U_i/U_j$ and hence also increase the minimum ratio estimator. On the other hand, recall from above that with increasing noise the estimation of the DAG improves. 

\begin{table}[H]
\centering
\begin{tabular}{|c c c c c c|} 
 \hline
 Sample Size & Edge & Edge Weight & Bias(1) & Bias(2) & Bias(3)  \\ [0.5ex] 
 \hline
 50 & $AC \to BC$ & 0.177 & 0.012 & 0.020 & 0.046 \\ 
 50 & $AC \to VA$ & 0.146 & 0.028 & 0.029 & 0.063 \\
 50 & $LZ \to BC$ & 0.168 & 0.013 & 0.020 & 0.045 \\
 50 & $LZ \to VA$ & 0.132 & 0.027 & 0.029 & 0.064 \\
 50 & $BC \to VA$ & 0.321 & 0 & 0.014 & 0.053 \\
 200 & $AC \to BC$ & 0.177 & 0 & 0.005 & 0.019 \\ 
 200 & $AC \to VA$ & 0.146 & 0 & 0.007 & 0.026 \\
 200 & $LZ \to BC$ & 0.168 & 0 & 0.005 & 0.020 \\
 200 & $LZ \to VA$ & 0.132 & 0.001 & 0.007 & 0.026 \\
 200 & $BC \to VA$ & 0.321 & 0 & 0.004 & 0.023 \\
 500 & $AC \to BC$ & 0.177 & 0 & 0.002 & 0.012 \\ 
 500 & $AC \to VA$ & 0.146 & 0 & 0.003 & 0.016 \\
 500 & $LZ \to BC$ & 0.168 & 0 & 0.002 & 0.012 \\
 500 & $LZ \to VA$ & 0.132 & 0 & 0.003 & 0.016 \\
 500 & $BC \to VA$ & 0.321 & 0 & 0.001 & 0.015 \\
 1000 & $AC \to BC$ & 0.177 & 0 & 0.001 & 0.008 \\ 
 1000 & $AC \to VA$ & 0.146 & 0 & 0.001 & 0.011 \\
 1000 & $LZ \to BC$ & 0.168 & 0 & 0.001 & 0.008 \\
 1000 & $LZ \to VA$ & 0.132 & 0 & 0.001 & 0.010 \\
 1000 & $BC \to VA$ & 0.321 & 0 & 0.001 & 0.010 \\[1ex] 
 \hline
\end{tabular}
\caption{Empirical Bias \eqref{bias} for (1): No noise, (2): Gamma(1,2), (3): Gamma(2,2)}
\label{table:4}
\end{table}

\begin{table}[H]
\centering
\begin{tabular}{|c c c c c c|} 
 \hline
 Sample Size & Edge & Edge Weight & Std(1) & Std(2) & Std(3)  \\ [0.5ex] 
 \hline
 50 & $AC \to BC$ & 0.177 & 0.031 & 0.023 & 0.031 \\ 
 50 & $AC \to VA$ & 0.146 & 0.046 & 0.033 & 0.041 \\
 50 & $LZ \to BC$ & 0.168 & 0.031 & 0.022 & 0.031 \\
 50 & $LZ \to VA$ & 0.132 & 0.046 & 0.033 & 0.043 \\
 50 & $BC \to VA$ & 0.321 & 0.006 & 0.015 & 0.031 \\
 200 & $AC \to BC$ & 0.177 & 0.001 & 0.005 & 0.011 \\ 
 200 & $AC \to VA$ & 0.146 & 0.002 & 0.006 & 0.015 \\
 200 & $LZ \to BC$ & 0.168 & 0.002 & 0.005 & 0.012 \\
 200 & $LZ \to VA$ & 0.132 & 0.005 & 0.008 & 0.017 \\
 200 & $BC \to VA$ & 0.321 & 0 & 0.004 & 0.013 \\
 500 & $AC \to BC$ & 0.177 & 0 & 0.002 &  0.007 \\ 
 500 & $AC \to VA$ & 0.146 & 0 & 0.003 &  0.009 \\
 500 & $LZ \to BC$ & 0.168 & 0 & 0.002 &  0.007 \\
 500 & $LZ \to VA$ & 0.132 & 0.001 & 0.003 &  0.009 \\
 500 & $BC \to VA$ & 0.321 & 0 & 0.001 &  0.008 \\
 1000 & $AC \to BC$ & 0.177 & 0 & 0.001 & 0.004 \\ 
 1000 & $AC \to VA$ & 0.146 & 0 & 0.001 & 0.006 \\
 1000 & $LZ \to BC$ & 0.168 & 0 & 0.001 & 0.004 \\
 1000 & $LZ \to VA$ & 0.132 & 0 & 0.001 & 0.006 \\
 1000 & $BC \to VA$ & 0.321 & 0 & 0.001 & 0.005 \\[1ex] 
 \hline
\end{tabular}
\caption{Empirical Standard Deviation \eqref{SD} for (1): No noise, (2): Gamma(1,2), (3): Gamma(2,2)}
\label{table:3}
\end{table}

\begin{table}[H]
\centering
\begin{tabular}{|c c c c c c|} 
 \hline
 Sample Size & Edge & Edge Weight & RMSE(1) & RMSE(2) & RMSE(3)  \\ [0.5ex] 
 \hline
 50 & $AC \to BC$ & 0.177 & 0.033 & 0.030 & 0.056 \\ 
 50 & $AC \to VA$ & 0.146 & 0.054 & 0.044 & 0.075 \\
 50 & $LZ \to BC$ & 0.168 & 0.033 & 0.029 & 0.054 \\
 50 & $LZ \to VA$ & 0.132 & 0.053 & 0.044 & 0.077 \\
 50 & $BC \to VA$ & 0.321 & 0.006 & 0.021 & 0.061 \\
 200 & $AC \to BC$ & 0.177 & 0.001 & 0.007 & 0.023 \\ 
 200 & $AC \to VA$ & 0.146 & 0.002 & 0.009 & 0.030 \\
 200 & $LZ \to BC$ & 0.168 & 0.002 & 0.007 & 0.023 \\
 200 & $LZ \to VA$ & 0.132 & 0.006 & 0.011 & 0.031 \\
 200 & $BC \to VA$ & 0.321 & 0 & 0.005 & 0.027 \\
 500 & $AC \to BC$ & 0.177 & 0 & 0.003 & 0.014 \\ 
 500 & $AC \to VA$ & 0.146 & 0 & 0.004 & 0.018 \\
 500 & $LZ \to BC$ & 0.168 & 0 & 0.003 & 0.014 \\
 500 & $LZ \to VA$ & 0.132 & 0.001 & 0.004 & 0.018 \\
 500 & $BC \to VA$ & 0.321 & 0 & 0.002 & 0.017 \\
 1000 & $AC \to BC$ & 0.177 & 0 & 0.001 & 0.090 \\ 
 1000 & $AC \to VA$ & 0.146 & 0 & 0.002 & 0.120 \\
 1000 & $LZ \to BC$ & 0.168 & 0 & 0.001 & 0.090 \\
 1000 & $LZ \to VA$ & 0.132 & 0 & 0.002 & 0.012 \\
 1000 & $BC \to VA$ & 0.321 & 0 & 0.001 & 0.011 \\[1ex] 
 \hline
\end{tabular}
\caption{Empirical RMSE \eqref{RMSE} for (1): No noise, (2): Gamma(1,2), (3): Gamma(2,2)}
\label{table:2}
\end{table}


\bibliographystyle{plain}
\bibliography{main}

\appendix
\section{Proofs of Section~3}\label{A}

\textsc{Proof of Theorem~\ref{solution} } 
Rewrite \eqref{1stdef_1} in matrix form by means of the tropical matrix multiplication \eqref{ch2:odot}  as
$$\bU = \big(\bU\odot \bC \vee \bZ\big)\odot \bE_d.$$
The associative law implies 
      \begin{align}\label{noisematrixform}
         \bU  & =  (\bU  \odot \bC \odot \bE_d) \lor \bZ \odot \bE_d \quad
         \Leftrightarrow \quad  \bU =  \bU  \odot \bar{\bC} \lor \bar{\bZ},
    \end{align}
    with $\bar{\bC}=\bC\odot \bE_d$, which is identical to \eqref{randomweights}, and $\bar{\bZ}=\bZ \odot \bE_d$. 
    The right-most equation in \eqref{noisematrixform} is of the same form as the non-noisy model in \eqref{eq:XtroprepX}, so that analogously to its solution given in \eqref{eq:dotrepr}, we get the solution
    \begin{align*}
           \bB^* = (\bI_d\vee \bar{\bC})^{\odot (d-1)}, \qquad \bU= \bar{\bZ} \odot \bB^* = \bZ \odot \bE_d \odot \bB^*,
    \end{align*}
    where $ \bB^*$ is the Kleene star matrix of $\bar \bC$.
    Therefore, defining $\bar{\bB}=\bE_d \odot \bB^*$ yields the result. 
   \hspace*{5cm} \halmos 
    
\medskip

\textsc{Proof of Corollary~\ref{cor:3rddef} } 
From \eqref{noisezdef} and the continuity of $Z$ and $\eps$ we have 
\begin{align} \label{k_real_i}
  U_i= \bigvee_{j \in \An(i)}\bar{b}_{ji}Z_j=\bar{b}_{ki}Z_k  
\end{align}
for some unique $k \in \An(i)$.
We want to show that this implies $U_i=\bar{b}_{ki}\tilde U_k$, i.e. $\tilde U_k=Z_k$. Applying first \eqref{1stdef_2}, then \eqref{noisezdef} and finally \eqref{randomb}, we obtain
$$\tilde U_k=\frac{ U_k}{\varepsilon_k}=\frac{\bigvee_{l \in \An(k)}\bar{b}_{lk}Z_l}{\varepsilon_k} \geq \frac{\bar{b}_{kk}Z_k}{\varepsilon_k}=Z_k.$$
Now assume that $\tilde U_k>Z_k$. Then there exists an $l \in \an(k) \subset \An(i)$ with $\bar b_{lk}Z_l> \varepsilon_k Z_k$. Note also that the maximum random path weight from $l$ to $i$ must be greater or equal than the maximum random path weight from $l$ to $i$ passing through node $k$.
These two facts lead to 
$$U_i=\bigvee_{j \in \An(i)}\bar{b}_{ji}Z_j \geq \bar{b}_{li}Z_l \geq \frac{\bar{b}_{lk}\bar{b}_{ki}}{\varepsilon_k}Z_l>\bar{b}_{ki}Z_k,$$
The above inequality, however,  contradicts \eqref{k_real_i}.
   \hspace*{5cm} \halmos 

\medskip

    \textsc{Proof of Lemma~\ref{lemma1} }
  (a) \, We first assume that $j=i$. Since $\mathcal D$ is a DAG, $\de(i) \cap \pa(i)=\emptyset$ and $\bar{b}_{ik}\bar{b}_{ki}=0$ for all $k \neq i$.
Therefore, the equality holds and the inequality is equivalent to $\bar{b}_{ii} \geq 0$ which obviously holds. \\ 
 Next, assume $j \neq i$ and $j \not \in \an(i)$. Then by \eqref{randomb}  $\bar{b}_{ji}=0$ and there is no path from $j$ to $i$. Therefore,  $\de(j) \cap \pa(i) = \emptyset$. Hence, the right-hand side of the inequality equals zero. Moreover, the equality holds as well, otherwise $\bar{b}_{jk}>0$ and $\bar{b}_{ki}>0$ for some $k \in V$ and therefore, by \eqref{randomb} there would be a path from $j$ to $k$ and from $k$ to $i$ which contradicts $j \not \in \an(i)$. \\
 {For $j \in \pa(i)$ with $\de(j) \cap \pa(i) = \emptyset$, the critical path must be the edge $j \to i$ since it is the only path from $j$ to $i$. Furthermore, the equality $\bar b_{ji}=\frac{\bar b_{jk}\bar b_{ki}}{\bar b_{kk}}$ holds for $k=i$ and $k=j$ while for all $k \not \in \{i,j\}$ it must hold that $\frac{\bar b_{jk}\bar b_{ki}}{\bar b_{kk}}=0$.
 Therefore, the equality holds. Moreover, the right-hand side of the inequality again equals zero and we have strict inequality. }\\
Now assume $j \in \an(i)$ and $\de(j) \cap \pa(i) \neq \emptyset$.  Then for every path  $p=[j=k_0 \rightarrow k_1 \rightarrow \ldots \rightarrow k_n=i]$ with $n \geq 2$ from $j$ to $i$ and every $k_m \in \{k_1,\ldots,k_{n-1}\}$, by \eqref{randompathweight}, 
               \begin{align}\label{subpaths}
  \bar{d}_{ji}(p) = & \varepsilon_j \prod_{l=0}^{n-1}c_{k_lk_{l+1}}\varepsilon_{k_{l+1}}\notag\\
                  = & \frac{\varepsilon_{j} \prod_{l=0}^{m-1}c_{k_lk_{l+1}}\varepsilon_{k_{l+1}}\cdot \varepsilon_{k_m} \prod_{l=m}^{n-1}c_{k_lk_{l+1}}  \varepsilon_{k_{l+1}}}{\varepsilon_{k_m}}\notag \\
                  =  & \frac{\bar{d}_{jk_{m}}(p_1)\bar{d}_{k_{m}i}(p_2)}{\bar{b}_{k_mk_m}},
              \end{align}
 with $p_1=[j=k_0 \rightarrow k_1 \rightarrow \ldots \rightarrow k_m]$ and $p_2=[k_m \rightarrow \ldots \rightarrow k_n=i]$, where in the last step we have used that 
              $\varepsilon_{k_m}=\bar{b}_{k_mk_m}$. 
     Therefore, for the random critical path $p$ with  $\bar{b}_{ji}=\bar{d}_{ji}(p)$ 
              it holds that every sub-path of this path is itself critical, otherwise we could find a path of larger random path weight by replacing the sub-path by a path of larger random weight. It follows that
              \begin{align*}
                  \bar{b}_{ji} \geq \bigvee\limits_{k \in \de(j) \cap \an(i)}\frac{\bar{b}_{jk}\bar{b}_{ki}}{\bar{b}_{kk}}
              \end{align*}
              with equality whenever the critical path $p$ from $j$ to $i$ contains a node $k \in \de(j) \cap \an(i)$. Since for $k=i$ or $k=j$ we have $\bar{b}_{ji}=\frac{\bar{b}_{jk}\bar{b}_{ki}}{\bar{b}_{kk}}$ and for $k \in V \setminus \big((\an(i)\cap \de(j))\cup \{j,i\}\big)$ we have $\frac{\bar{b}_{jk}\bar{b}_{ki}}{\bar{b}_{kk}}=0$, the equality holds as well.\\[2mm]
(b) \,  First assume that there is a path $p:=[j \to \ldots \to k \to \ldots \to i]$ with $\bar d_{ji}(p)=\bar b_{ji}$. Then by \eqref{subpaths} we have $\bar{b}_{ji}=\frac{\bar{d}_{jk}(p_1)\bar{d}_{ki}(p_2)}{\bar{b}_{kk}}$. Now every sub-path of a random critical path must be itself critical, as explained in the proof of part a). Hence, $\bar{b}_{jk}=\bar{d}_{jk}(p_1)$ and $\bar{b}_{ki}=\bar{d}_{ki}(p_2)$ and for this reason $\bar{b}_{ji}=\frac{\bar{b}_{jk}\bar{b}_{ki}}{\bar{b}_{kk}}$. 

In contrast, let $\bar d_{ji}(p) < \bar b_{ji}$ for all $p \in P_{jki}$, where $P_{jki}$ denotes all paths from $j$ to $i$ that pass through $k$. Now choose $p_1=[j \to \ldots \to k]$ and $p_2=[k \to \ldots \to i]$ such that $\bar d_{jk}(p_1)=\bar b_{jk}$ and $\bar d_{ki}(p_2)=\bar b_{ki}$. Then, for the path $p \in P_{jki}$ that results from concatenation of $p_1$ and $p_2$ we have by \eqref{randomb}
$$\bar b_{ji}> \bar d_{ji}(p)=\frac{\bar{b}_{jk}\bar{b}_{ki}}{\bar{b}_{kk}},$$
which proves the reverse direction. \\[2mm]
(c) \, For $j=i$ the inequality obviously holds, since $b_{ii}=1$. 
        If $j \not \in \An(i)$, 
        then by definition $\bar{b}_{ji}=b_{ji}=0$ and $\bar{b}_{jj}=\varepsilon_{j} \geq 1$. Therefore, 
        the inequality is equivalent to $U_i/U_j \geq 0$, which is true. 
        Now let $j \in \an(i)$. Then by \eqref{randompathweight} and \eqref{randomb} the center ratio can be written as 
        \begin{align}\label{eq:A3}
                  \frac{\bar{b}_{ji}}{\bar{b}_{jj}}:=\bigvee\limits_{p \in P_{ji}} \frac{\bar{d}_{ji}(p)}{\bar{b}_{jj}}= \bigvee\limits_{p \in P_{ji}} d_{ji}(p)\prod_{l=0}^{n-1}\varepsilon_{k_{l+1}} \geq  \bigvee\limits_{p \in P_{ji}} d_{ji}(p) = b_{ji},
              \end{align}
              since $\bar{b}_{jj}=\varepsilon_j$ and $\varepsilon_i \geq 1$.
        Now we use \eqref{noise3rddef} and obtain by \eqref{eq:A3}
              \begin{align*}
                  \frac{U_i}{U_j}= \frac{\bigvee_{k \in \An(i)}\bar{b}_{ki}\tilde U_k}{U_j} \geq \frac{\bar{b}_{ji}\tilde U_j}{U_j} =\frac{\bar{b}_{ji} U_j}{\varepsilon_j U_j}=\frac{\bar{b}_{ji} }{\bar{b}_{jj} } \geq b_{ji}.
              \end{align*}
(d) \,  We first prove by contradiction that there is no lower bound for $U_i/U_j$ of larger value than the one given in part (c). 
Assume $j \in \an(i)$ and there is a lower bound $c>b_{ji}$.  Since $Z_1,\ldots,Z_d$ are iid, every innovation $Z_l$ can realize the maximum with positive probability, such that for every $l \in \An(j)$,
              \begin{align}\label{subset:omega}
                  \mathbb P \Big(\Big\{U_j = \bigvee_{k \in \An(j)}\bar{b}_{kj}Z_k=\bar{b}_{lj}Z_l\Big\} \cap \Big\{U_i =\bigvee_{k \in \An(i)}\bar{b}_{ki}Z_k=\bar{b}_{li}Z_l\Big\}\Big)>0.
              \end{align}
              Hence, without loss of generality we assume that this holds for $l = j$. 
              Denote the random critical path $p:=[j= k_0 \rightarrow \ldots \rightarrow  k_n=i]$ such that  $\bar{d}_{ji}(p)=\bar{b}_{ji}$. 
              Then, it follows on the event in \eqref{subset:omega} with $l=j$ from \eqref{randompathweight} 
              that
              \begin{align*}
                  \mathbb P \left(\frac{U_i}{U_j}<c\right) & =  \mathbb P \left(\frac{\bar{b}_{ji}}{\bar{b}_{jj}}<c\right)
                   = \mathbb P \left(\frac{\varepsilon_j d_{ji}(p) \prod_{l=0}^{n-1}\varepsilon_{ k_{l+1}}}{\varepsilon_j }<c\right) 
                 =\mathbb P \left( d_{ji}(p) \prod_{l=0}^{n-1}\varepsilon_{k_{l+1}}<c\right)>0,
              \end{align*}
              since  $d_{ji}(p)\leq b_{ji} < c$ and $\eps \geq 1$. Hence, $c$ is no lower bound and together with part c) this entails the support for $j\in\an(i)$. \\
             Now assume $j \not \in \An(i)$ such that $b_{ji}=0$. 
             Assume that $U_i/U_j$ is lower bounded by some $c>0$. Then by \eqref{noisezdef},
              \begin{align*}
                  \frac{U_i}{U_j} \geq c \quad\Leftrightarrow\quad \bigvee_{k \in \An(i)}\bar{b}_{ki}Z_k \geq c \bigvee_{k \in \An(j)}\bar{b}_{kj}Z_k,
              \end{align*}
              which is equivalent to 
              \begin{align*}
                  \bigvee_{k \in \An(i)}\bar{b}_{ki}Z_k   \geq c \, \Big(\bigvee_{k \in \An(i)\cap \An(j)}\bar{b}_{kj}Z_k \lor \bigvee_{k \in  \An(j)\setminus \An(i)}\bar{b}_{kj}Z_k\Big).
              \end{align*}
              Therefore, it holds in particular, that 
              \begin{align}\label{contradict}
                  \bigvee_{k \in \An(i)}\bar{b}_{ki}Z_k   \geq c \, \bar{b}_{lj}Z_l
              \end{align}
              for every $l \in \An(j)\setminus \An(i)$. This set is non-empty since $j \not \in \An(i)$, so it contains at least $j$. 
              However, since the innovation and the noise variables are all independent and unbounded above, we have for every $l \in \An(j)\setminus \An(i)$ 
              \begin{align*}
                   \mathbb P \Big(Z_l\geq \frac{\bigvee_{k \in \An(i)}\bar{b}_{ki}Z_k}{c \ \bar{b}_{lj}}\Big)>0,
              \end{align*}
              contradicting \eqref{contradict} and, hence, the assumption of a lower positive bound $c$ for ${U}_i/{U}_j$. 
              
              The upper interval limits of $U_i/U_j$ for $j \in \an(i)$ and and $j \not \in \An(i)$ follow from changing the roles of $i$ and $j$. For $j \neq i$, the ratio $U_i/U_j$ always contains $\varepsilon_i$ or $\varepsilon_j$ and both random variables are atom-free and independent of all innovations $Z_1,\ldots,Z_d$ and $\varepsilon_k$ for $k \neq i$ and $k \neq j$. Therefore, the ratio inherits the continuity of the noise variables and part d) follows. \\[2mm] 
(e) \, For $j=i$ we have $b_{ji}=1 \neq 0=\bigvee_{k \in \de(j) \cap \an(i)}\frac{b_{jk}b_{ki}}{b_{kk}}$. If $j \not \in \An(i)$ we have $b_{ji}=\bar b_{ji}=0$ by \eqref{randomb}.
 
Next assume that $j \in \an(i)$, and $b_{ji} = \bigvee_{k \in \de(j) \cap \an(i)}\frac{b_{jk}b_{ki}}{b_{kk}}\neq 0$. Then there is a path 
$p=[j=k_0 \rightarrow k_1 \rightarrow \ldots \rightarrow k_n=i]$ from $j$ to $i$ with non-random path weight $d_{ji}(p)=b_{ji}$, which is not the edge $j\to i$. 

For a contradiction, assume that $\bar{b}_{ji} > \bigvee_{k \in \de(j) \cap \an(i)}\frac{\bar{b}_{jk}\bar{b}_{ki}}{\bar{b}_{kk}}$. This is equivalent to the edge $j \to i$ being the random critical path. 
However, every path $p \in P_{ji}$ has random path weight, which depends on
 
              both noise variables $\varepsilon_i$ and $\varepsilon_j$, so in particular, the non-random critical path $p=[j=k_0 \rightarrow k_1 \rightarrow \ldots \rightarrow k_n=i]$ from $j$ to $i$ with path weight $d_{ji}(p)=b_{ji}$ is one of these paths. Therefore, by \eqref{randompathweight}  and since $b_{ji}>c_{ji}$, the random path weight of $p$ is
            \begin{align*}
                  \bar{d}_{ji}(p)=b_{ji} \varepsilon_j \prod_{l=0}^{n-1}\varepsilon_{ k_{l+1}}\ge  b_{ji}  \varepsilon_j \varepsilon_i > c_{ji}  \varepsilon_j \varepsilon_i = \bar{b}_{ji},
            \end{align*}
            where we have used that $\varepsilon\ge 1$. This is a contradiction and hence $\bar{b}_{ji} = \bigvee_{k \in \de(j) \cap \an(i)}\frac{\bar{b}_{jk}\bar{b}_{ki}}{\bar{b}_{kk}}$. \\[2mm]
(f) \,  The assumptions $b_{ji} > \bigvee_{k \in \de(j) \cap \an(i)}\frac{b_{jk}b_{ki}}{b_{kk}}$ and $\de(j) \cap \an(i) \neq\emptyset$
are equivalent to the edge  $p_{\max}= [j\to i]$ being the only non-random critical path.

Let $p'=[j=k_0 \rightarrow k_1 \rightarrow \ldots \rightarrow k_n=i]\neq p_{\max}$ be the path such that $\bigvee_{p \in P_{ji}\setminus \{p_{\max}\}} \bar{d}_{ji}(p)=\bar{d}_{ji}(p')$. Then 
        \begin{align*}
            \bigvee\limits_{p \in P_{ji}\setminus \{p_{\max}\}} \bar{d}_{ji}(p)=\bar{d}_{ji}(p')
            =\bigvee\limits_{k \in \de(j) \cap \an(i)}\frac{\bar{b}_{jk}\bar{b}_{ki}}{\bar{b}_{kk}},
        \end{align*}
        otherwise we can construct a path of larger random path weight from $j$ to $i$ passing through $k$ as explained in the proof of part a).
        First assume that $\bar b_{ji}=\bar d_{ji}(p_{\max})$. Then, $\bar{b}_{ji} > \bigvee_{k \in \de(j) \cap \an(i)}\frac{\bar{b}_{jk}\bar{b}_{ki}}{\bar{b}_{kk}}$ and $\de(j) \cap \an(i) \neq\emptyset$ is by \eqref{randompathweight} and \eqref{randomb} equivalent to 
        \begin{align}\label{crit.edge:pos.prob}
            \bar{b}_{ji} > \bar{d}_{ji}(p') = \varepsilon_i \varepsilon_j d_{ji}(p') \prod_{l=0}^{n-2}\varepsilon_{ k_{l+1}}
            \quad \iff \quad
            \frac{b_{ji}}{d_{ji}(p')} >  \prod_{l=0}^{n-2}\varepsilon_{ k_{l+1}}.
        \end{align}
        Since  $\varepsilon\ge 1$, also $b_{ji}/d_{ji}(p')>1$. Hence, the event given by \eqref{crit.edge:pos.prob} has positive probability which is however, strictly smaller than one, since the noise variables do not have an upper bound. 
        Therefore, since $\bar{b}_{ji} \geq \bigvee_{k \in \de(j) \cap \an(i)}\frac{\bar{b}_{jk}\bar{b}_{ki}}{\bar{b}_{kk}}$, by part a), the complementary event 
        $$\Big\{\bar{b}_{ji} = \bigvee\limits_{k \in \de(j) \cap \an(i)}\frac{\bar{b}_{jk}\bar{b}_{ki}}{\bar{b}_{kk}}\Big\}$$ 
        is also having positive probability.

\halmos

\medskip

    \textsc{Proof of Lemma~\ref{lemma:crit_rand_path} }
(a) Suppose there is an edge $k_l \to k_{l+1}$ in $p$ such that $c_{k_lk_{l+1}}\not \in \mathcal D^B$. Then, $\de(j) \cap \an(i) \neq \emptyset$ and by Lemma~\ref{lemma1}~e) $\mathbb P (\bar b_{k_lk_{l+1}}=c_{k_lk_{l+1}} \varepsilon_{k_{l}}\varepsilon_{k_{l+1}})=0$, so we can replace the edge $k_l\to k_{l+1}$ by some other path to get a new path from $j$ to $i$ of larger random path weight than $p$. Hence, $p$ is not a possible critical path realization. The same argument can be used for the reverse.  \\[2mm] 
(b) \, First consider $\neg (S_{p_1} \cap S_{p_2} = \emptyset$ or for every $r \in S_{p_1} \cap S_{p_2}$ the sub-path of $p_1$ from $j$ to $r$  is a sub-path of $p_2$ or the  sub-path  of $p_2$ from $l$ to $r$ is a sub-path of $p_1$). 
Then there exists some node $r \in S_{p_1}\cap S_{p_2}$ such that
$p_1=[j\to\ldots\to s \to r\to \ldots \to i]$ and $p_2=[l\to\ldots\to t \to r\to \ldots \to m]$ with $s\neq t$. 
Denote by $p_{11}:=[j\to\ldots\to s]$ the sub-path of $p_1$ from $j$ to $s$. 
We want to show by contradiction that the event \eqref{jointprobability} has probability zero. Therefore, we consider the subset of $\Omega$ 
such that \eqref{jointprobability} holds and show that it is a null-set. 
Since on this subset, $p_{1}$ is the random critical path and passes through $s$, by Lemma~\ref{lemma1}~b) we have  $\bar{b}_{ji} = \frac{\bar{b}_{js}\bar{b}_{si}}{\bar{b}_{ss}}$ and $U_s=U_j \bar{b}_{js} /\eps_j= U_j d_{js}(p_{11}) \prod_{k \in S_{p_{11}}} \eps_k$.
With the same argument it also holds that $\bar{b}_{ji} = \frac{\bar{b}_{jr}\bar{b}_{ri}}{\bar{b}_{rr}}$  and $U_r=U_j \bar{b}_{jr} /\eps_j= U_j d_{js}(p_{11}) \prod_{k \in S_{p_{11}}} \eps_k c_{sr} \eps_r$. 
Hence, it must holds that $U_r=U_s  c_{sr} \eps_r$.
By the same arguments, we also must have $U_r=U_t c_{tr} \eps_r$, which together leads to
\begin{align*}
    U_s c_{sr}=U_t c_{tr}.
\end{align*}
This is by  \eqref{randomb} and \eqref{noisezdef} equivalent to
\begin{align*}
    c_{sr}\bigvee_{l \in \An(s)}\eps_l\bigvee\limits_{p \in P_{ls}} d_{ls}(p) \prod_{k \in  S_{p}} \varepsilon_{k}Z_l=c_{tr}\bigvee_{l \in \An(t)}\eps_l\bigvee\limits_{p \in P_{lt}} d_{lt}(p) \prod_{k \in  S_{p}} \varepsilon_{k}Z_l.
\end{align*}
Now since $\mathcal D$ is acyclic, there cannot be a path from $s$ to $t$ and from $t$ to $s$; so without loss of generality we can assume that there is no path from $t$ to $s$. 
However, the right-hand side of the equation always contains $\eps_t$ which is not part of the left-hand side. Since $Z_1,\ldots,Z_d$ as well as $\varepsilon_1,\ldots,\varepsilon_i$ are atom-free and independent random variables, this can only happen on a null-set.

Next consider the reverse, i.e. $S_{p_1} \cap S_{p_2} = \emptyset$ or for every $r \in S_{p_1} \cap S_{p_2}$ the sub-path of $p_1$ from $j$ to $r$  is a sub-path of $p_2$ or the  sub-path  of $p_2$ from $l$ to $r$ is a sub-path of $p_1$.

If $S_{p_1}\cap S_{p_2}=\emptyset$, then the probability of \eqref{jointprobability} is obviously positive.
Without loss of generality we now assume that for every $r \in S_{p_1} \cap S_{p_2}$ the sub-path of $p_2$ from $l$ to $r$  is a sub-path of $p_1$. We now define $r$ to be the last common node of the two paths $p_1$ and $p_2$.
Then, $p_1$ and $p_2$ induce the paths $p'=[j \to \ldots \to l \to \ldots r]$, $p''=[r \to \ldots \to i]$ and $p'''=[ r \to \ldots \to m]$.
Then 
\begin{align*}
    &    \Big\{U_i= U_j d_{ji}(p_1)\prod_{k \in S_{p_1}}\varepsilon_{k}, U_m=U_l d_{lm}(p_2)\prod_{k \in S_{p_2}}\varepsilon_{k} \Big\}\\
&=
         \Big\{U_r= U_j d_{jr}(p')\prod_{k \in S_{p'}}\varepsilon_{k}, U_i= U_r d_{ri}(p'')\prod_{k \in S_{p''}}\varepsilon_{k}, U_m=U_r d_{rm}(p''')\prod_{k \in S_{p''}}\varepsilon_{k}\Big\},
        \end{align*}
which has positive probability, since $S_{p'}\cap S_{p''}\cap S_{p'''}=\emptyset$.
\halmos 

\medskip

\textsc{Proof of Theorem~\ref{epscount} } 
By the law of total probability we have for $x\ge 1$,
\begin{align*}
    I(x) &:= \mathbb P\Big(\frac{U_i}{U_j} \leq  b_{ji}x\Big) = \mathbb P\Big(\frac{U_i}{U_j} \leq  b_{ji}x,  U_i =\tilde U_j \bar{b}_{ji}\Big) + \mathbb P\Big(\frac{U_i}{U_j} \leq  b_{ji}x , U_i \neq \tilde U_j \bar{b}_{ji}\Big)\\
    &=:I_1(x)  + I_2(x) 
\end{align*}
We denote all paths from $j$ to $i$ by $P_{ji}=\{p_1, \ldots , p_r, p_{\max}\}$. There are two situations, either $r=0$ (where we interpret the above set of paths as $\{p_{\max}\}$), or $r\ge 1$. We first give a proof for $r\ge 1$.
We start with $I_1(x)$.
Since $p_{\max}$ is generic, every path $p \neq p_{\max}$ from $j$ to $i$ has non-random edge weight $d_{ji}(p)<b_{ji}$. 
Therefore, with \eqref{1stdef_2} in the first line, \eqref{randomb} in the third
 and \eqref{randompathweight} in the last,
 we have for $x>1$,
    \begin{align}
       I_1(x) &= \mathbb P(U_i/(\tilde U_j\eps_j) \leq  b_{ji}x , U_i =\tilde U_j \bar{b}_{ji}) \notag \\ 
        & = \mathbb P(\bar{b}_{ji}/\varepsilon_j \leq  b_{ji}x, U_i =\tilde U_j \bar{b}_{ji} )\notag \\
       & =\mathbb P\Big(\bigvee\limits_{p \in P_{ji}} \bar{d}_{ji}(p) /\varepsilon_j \leq  b_{ji}x , U_i =\tilde U_j \bar{b}_{ji} \Big) \notag \\ 
         & = \mathbb P\Big(\bigvee\limits_{p \in P_{ji}} d_{ji}(p)\prod_{k\in S_p}\varepsilon_{k}  \leq  b_{ji}x, U_i =\tilde U_j \bar{b}_{ji} \Big). \label{I(x)}
    \end{align}
By definition of $\bar{b}_{ji}$ in \eqref{randomb}, there is a path $p \in \{p_{\max}, p_1,\ldots,p_r\}$ such that  $\bar{d}_{ji}(p)=\bar{b}_{ji}$ and by continuity of $\eps$ the probability that multiple paths satisfy the equation is equal to 0. Therefore, again applying the law of total probability, we find
    \begin{align}\label{i1}
       I_1(x) =& \ \mathbb P\Big(\bigvee\limits_{p \in P_{ji}} d_{ji}(p)\prod_{k \in S_p}\varepsilon_{k}  \leq  b_{ji}x , \bar{d}_{ji}(p_{\max})=\bar{b}_{ji} , U_i =\tilde U_j \bar{b}_{ji} \Big) \\
       +& \mathbb P\Big(\bigvee\limits_{p \in P_{ji}} d_{ji}(p)\prod_{k \in S_p}\varepsilon_{k}  \leq  b_{ji}x , \bigvee\limits_{p \in \{p_1,\ldots,p_r\}} \bar{d}_{ji}(p)=\bar{b}_{ji}, U_i =\tilde U_j \bar{b}_{ji} \Big) \notag \\
        =&\ \mathbb P\Big(\bigvee\limits_{p \in P_{ji}} d_{ji}(p)\prod_{k \in S_p}\varepsilon_{k}  \leq  b_{ji}x , \bar{d}_{ji}(p_{\max})=\bar{b}_{ji} , U_i =\tilde U_j \bar{b}_{ji} \Big) \notag \\
       + &\sum\limits_{s=1}^r\mathbb P\Big(\bigvee\limits_{p \in P_{ji}} d_{ji}(p)\prod_{k \in S_p}\varepsilon_{k}  \leq  b_{ji}x , \bar{d}_{ji}(p_s)=\bar{b}_{ji}, U_i =\tilde U_j \bar{b}_{ji} \Big)=:I_{11}(x)+I_{12}(x).\notag
    \end{align}
We first find upper and lower bounds for $I_{11}(x)$. We denote by $P_{kji}$ all paths from $k$ to $i$ which pass through $j$.
Using the simple identity 
\begin{align}\label{simple}
\{z_1\vee z_2\le a, z_1\vee z_2=z_1\}=\{z_1\le a, z_2\leq z_1\},
\end{align}
\eqref{randomb} and \eqref{noisezdef} imply 
\begin{align}\label{I11_ext} 
   &\Big\{\bigvee\limits_{p \in P_{ji}} d_{ji}(p)\prod_{k \in S_p}\varepsilon_{k}  \leq  b_{ji}x \Big\} \bigcap \Big\{ \bar{d}_{ji}(p_{\max})=\bar{b}_{ji} \Big\} \bigcap \Big\{ U_i =\tilde U_j \bar{b}_{ji} \Big\} \\
    =  &\Big\{\bigvee\limits_{p \in P_{ji}} d_{ji}(p)\prod_{k \in S_p}\varepsilon_{k}  \leq \bar{d}_{ji}(p_{\max})  \Big\} \bigcap \Big\{ \bar{d}_{ji}(p_{\max})\leq b_{ji}x \Big\} \bigcap \Big\{ U_i =\tilde U_j \bar{b}_{ji} \Big\} \notag \\
    =  &\Big\{\bigvee\limits_{p \in P_{ji}} d_{ji}(p) \prod_{k \in S_p}\varepsilon_k  \leq {b_{ji}}\prod_{k \in  S_{p_{\max}}} \varepsilon_{k}  \Big\} \bigcap \Big\{ \prod_{k \in  S_{p_{\max}}} \varepsilon_{k} \leq x \Big\}
    \bigcap \Big\{ U_i =\tilde U_j \bar{b}_{ji} \Big\} \notag \\
    =  &\bigcap\limits_{p \in P_{ji}\setminus \{p_{\max}\}} \Big\{ \prod_{k \in S_p}\varepsilon_k  \leq \frac{b_{ji}}{d_{ji}(p)}\prod_{k \in  S_{p_{\max}}} \varepsilon_{k}  \Big\} \bigcap \Big\{ \prod_{k \in  S_{p_{\max}}} \varepsilon_{k} \leq x \Big\} \bigcap \notag \\
    &  \bigcap_{l \in \An(i)}\Big\{\bigcap\limits_{p \in P_{li}\setminus P_{lji}}\Big\{ d_{li}(p) \prod_{k \in  S_{p} \cup \{l\}} \varepsilon_{k}Z_l  \leq  b_{ji} \prod_{k \in  S_{p_{\max}}} \varepsilon_{k} \bigvee_{l \in \An(j)} \bar{b}_{lj} Z_l  \Big\}\Big\} \label{I11_ext2}.
\end{align}
Cancelling all noise variables possible, and since $\eps>1$, we find a lower bound
    \begin{align} \label{I11}
       I_{11}(x) 
       =& \, \mathbb P\Big(\bigcap\limits_{p \in P_{ji}\setminus \{ p_{\max} \}} \Big\{ \prod_{k \in S_p\setminus S_{p_{\max}}}\varepsilon_{k}  \leq  \frac{b_{ji} }{d_{ji}(p)} \prod_{k \in S_{p_{\max}}\setminus S_{p}}\varepsilon_{k} \Big\} \bigcap \Big\{\prod_{k \in S_{p_{\max}}}\varepsilon_{k} \leq x \Big\} \bigcap  \notag  \\
       &  \bigcap_{l \in \An(i)}\Big\{\bigcap\limits_{p \in P_{li}\setminus P_{lji}}\Big\{ d_{li}(p) \prod_{k \in  (S_{p}\cup\{l\})\setminus S_{p_{\max}}} \varepsilon_{k}Z_l  \leq  b_{ji} \prod_{k \in  S_{p_{\max}}\setminus (S_{p}\cup\{l\})} \varepsilon_{k} \bigvee_{l \in \An(j)} \bar{b}_{lj} Z_l  \Big\}\Big\} \Big)  \\
        \geq & \, \mathbb P\Big(\bigcap\limits_{p \in P_{ji}\setminus \{ p_{\max} \}} \Big\{ \prod_{k \in S_p\setminus S_{p_{\max}}}\varepsilon_{k}  \leq  \frac{b_{ji} }{d_{ji}(p)} \Big\} \bigcap \Big\{ \prod_{k \in S_{p_{\max}}}\varepsilon_{k} \leq x \Big\} \bigcap \notag  \\
       &  \bigcap_{l \in \An(i)}\Big\{\bigcap\limits_{p \in P_{li}\setminus P_{lji}}\Big\{ d_{li}(p) \prod_{k \in  (S_{p}\cup\{l\})\setminus S_{p_{\max}}} \varepsilon_{k}Z_l  \leq  b_{ji}  \bigvee_{l \in \An(j)} \bar{b}_{lj} Z_l  \Big\}\Big\} \Big) \notag \\
       =& \, \mathbb P\Big( \bigcap_{l \in \An(i)}\Big\{\bigcap\limits_{p \in P_{li}\setminus P_{lji}}\Big\{ d_{li}(p) \prod_{k \in  (S_{p}\cup\{l\})\setminus S_{p_{\max}}} \varepsilon_{k}Z_l  \leq  b_{ji}  \bigvee_{l \in \An(j)} \bar{b}_{lj} Z_l  \Big\}\Big\} \bigcap  \notag \\
       &  \bigcap\limits_{p \in P_{ji}\setminus \{ p_{\max} \}} \Big\{ \prod_{k \in S_p\setminus S_{p_{\max}}}\varepsilon_{k}  \leq  \frac{b_{ji} }{d_{ji}(p)} \Big\} \Big)  \, \mathbb P \Big( \prod_{k \in S_{p_{\max}}}\varepsilon_{k} \leq x\Big) \notag \\
       =: & \, c_1 \ \mathbb P\Big( \prod_{k \in S_{p_{\max}}}\varepsilon_{k} \leq x \Big) \notag,
    \end{align}
for some constant $c_1 \in [0,1]$ by independence of the noise variables. 

We show that $c_1>0$. 
To do so, recall 
that $b_{ji}/d_{ji}(p)>1$ for every $p \neq p_{\max}$. 
Therefore, since $\{p \in P_{ji}\setminus \{ p_{\max} \}\}\neq \emptyset$ and $\eps>1$,
\begin{align}\label{const:1}
    \mathbb P\Big(\bigcap\limits_{p \in P_{ji}\setminus \{ p_{\max} \}} \Big\{ \prod_{k \in S_p\setminus S_{p_{\max}}}\varepsilon_{k}  \leq  \frac{b_{ji} }{d_{ji}(p)} \Big\} \Big)>0.
\end{align}
Next, we want to show that also 
\begin{align} \label{const:2}
    \mathbb P\Big( \bigcap_{l \in \An(i)}\Big\{\bigcap\limits_{p \in P_{li}\setminus P_{lji}}\Big\{ d_{li}(p) \prod_{k \in  (S_{p}\cup\{l\})\setminus S_{p_{\max}}} \varepsilon_{k}Z_l  \leq  b_{ji}  \bigvee_{l \in \An(j)} \bar{b}_{lj} Z_l  \Big\}\Big\}  \Big)>0.
\end{align}
For this, observe that the left-hand side of the inequality in \eqref{const:2} does not contain $Z_j$, since all paths from $j$ to $i$ pass through $j$.
Since $\bar{b}_{lj}$ and the left-hand side of the inequality in  \eqref{const:2} is independent of $Z_j$ for all $l \in \{1,\ldots,d\}$ and $Z_j$ has unbounded support, $Z_j$ can become arbitrarily large with positive probability such that \eqref{const:2} holds. 

The intersection of the two events has also positive probability since \eqref{const:1} is independent of $Z_j$.
This implies that $c_1>0$ and a positive lower bound for $I_{11}(x)$. 

To get an upper bound, observe that $\varepsilon\ge 1$ and, hence, for every set $S_p$  we have 
\begin{align*}
   \Big\{\varepsilon_k: k \in S_{p_{\max}} \text{ and } \prod_{k \in S_{p_{\max}}}\varepsilon_{k} \leq x \Big\} \subseteq  \Big\{\varepsilon_k: k \in S_{p_{\max}} \text{ and } \prod_{k \in S_{p_{\max}}\setminus S_p}\varepsilon_{k} \leq x \Big\}.
\end{align*}
Therefore, starting with $\eqref{I11}$ we find the upper bound
\begin{align}
     I_{11}(x) \leq & \, \mathbb P\Big(\bigcap\limits_{p \in P_{ji}\setminus \{ p_{\max} \}} \Big\{ \prod_{k \in S_p\setminus S_{p_{\max}}}\varepsilon_{k}  \leq  \frac{b_{ji} }{d_{ji}(p)} x \Big\} \bigcap \Big\{ \prod_{k \in S_{p_{\max}}}\varepsilon_{k} \leq x \Big\} \bigcap \label{upperboundI11}  \\
       &  \bigcap_{l \in \An(i)}\Big\{\bigcap\limits_{p \in P_{li}\setminus P_{lji}}\Big\{ d_{li}(p) \prod_{k \in  (S_{p}\cup\{l\})\setminus S_{p_{\max}}} \varepsilon_{k}Z_l  \leq  b_{ji} x  \bigvee_{l \in \An(j)} \bar{b}_{lj} Z_l  \Big\}\Big\} \Big) \notag  \\
       = & \, \mathbb P\Big( \bigcap_{l \in \An(i)}\Big\{\bigcap\limits_{p \in P_{li}\setminus P_{lji}}\Big\{ d_{li}(p) \prod_{k \in  (S_{p}\cup\{l\})\setminus S_{p_{\max}}} \varepsilon_{k}Z_l  \leq  b_{ji} x \bigvee_{l \in \An(j)} \bar{b}_{lj} Z_l  \Big\}\Big\} \bigcap \notag   \\
       &  \bigcap\limits_{p \in P_{ji}\setminus \{ p_{\max} \}} \Big\{ \prod_{k \in S_p\setminus S_{p_{\max}}}\varepsilon_{k}  \leq  \frac{b_{ji} }{d_{ji}(p)} x \Big\} \Big)  \, \mathbb P \Big( \prod_{k \in S_{p_{\max}}}\varepsilon_{k} \leq x\Big) \notag \\
       = & \, c_2(x) \, \mathbb P\Big( \prod_{k \in S_{p_{\max}}}\varepsilon_{k} \leq x \Big). \notag
\end{align}
Since the innovations and the noise variables are atom-free, it follows that $\lim_{x \downarrow 1} c_2(x)= c_1$ and, therefore,
\begin{align}\label{III_asymp}
       I_{11}(x) \sim c_1 \ \mathbb P\Big( \prod_{k \in S_{p_{\max}}}\varepsilon_{k} \leq x \Big),\quad x\downarrow 1.
\end{align}
We next show that $I_{12}(x)=o(I_{11}(x))$ as $x\downarrow 1$. 
We have for each summand $m \in \{1,\ldots,r\}$, using the simple identity \eqref{simple} to obtain the third line,
\begin{align} \label{I12}
       \mathbb P &\Big(\bigvee\limits_{p \in P_{ji}} d_{ji}(p)\prod_{k \in S_p}\varepsilon_{k}  \leq  b_{ji}x,  \bar{d}_{ji}(p_m)=\bar{b}_{ji}, U_i =\tilde U_j \bar{b}_{ji} \Big) \notag  \\
        \leq \mathbb P &\Big(\bigvee\limits_{p \in P_{ji}} d_{ji}(p)\prod_{k \in S_p}\varepsilon_{k}  \leq  b_{ji}x,  \bar{d}_{ji}(p_m)=\bar{b}_{ji} \Big) \notag \\
        = \mathbb P &\Big(\bigvee\limits_{p \in P_{ji}} d_{ji}(p)\prod_{k \in S_p}\varepsilon_{k}  \leq  \bar{d}_{ji}(p_m),  \bar{d}_{ji}(p_m)\leq b_{ji}x \Big) \notag \\
       = \mathbb P & \Big(\bigcap\limits_{p \in P_{ji}\setminus \{p_m\}} \Big\{ \prod_{k \in S_p}\varepsilon_{k}  \leq  \frac{d_{ji}(p_m) }{d_{ji}(p)} \prod_{k \in S_{p_m}}\varepsilon_{k} \Big\} \bigcap \Big\{ \prod_{k \in S_{p_m}}\varepsilon_{k} \leq \frac{b_{ji}x} {d_{ji}(p_m)} \Big\} \Big) \notag \\
        \leq \mathbb P &\Big( \Big\{ \prod_{k \in S_{p_{\max}}}\varepsilon_{k} \leq \frac{d_{ji}(p_m) }{b_{ji}} \prod_{k \in S_{p_m}}\varepsilon_{k} \Big\} \bigcap \Big\{ \prod_{k \in S_{p_m}}\varepsilon_{k} \leq \frac{b_{ji}x} {d_{ji}(p_m)} \Big\} \Big) 
        \end{align}
        Now  the first event rewrites as
$\{\frac{b_{ji}}{d_{ji}(p_m) } \prod_{k \in S_{p_{\max}}}\varepsilon_{k}  \leq  \prod_{k \in S_{p_m}}\varepsilon_{k}\} \subseteq
    \{\frac{b_{ji}}{d_{ji}(p_m) }  \leq  \prod_{k \in S_{p_m}}\varepsilon_{k} \}$, since $\eps>1$.
    Moreover, 
    $$\Big\{ \prod_{k \in S_{p_{\max}}}\varepsilon_{k} \leq \frac{d_{ji}(p_m) }{b_{ji}} \prod_{k \in S_{p_m}}\varepsilon_{k}\Big\} \bigcap \Big\{ \prod_{k \in S_{p_m}}\varepsilon_{k} \leq \frac{b_{ji}x} {d_{ji}(p_m)}\Big\}\subseteq \Big\{ \prod_{k \in S_{p_{\max}}}\varepsilon_{k} \leq x\Big\}.$$ 
    Hence, 
        \begin{align*}
        \eqref{I12}\leq \mathbb P &\Big( \Big\{ \prod_{k \in S_{p_{\max}}}\varepsilon_{k} \leq x \Big\} \bigcap \Big\{ \prod_{k \in S_{p_m}}\varepsilon_{k} \in \left[\frac{b_{ji}} {d_{ji}(p_m)}, \frac{b_{ji}x} {d_{ji}(p_m)}\right] \Big\} \Big),
    \end{align*}
    Moreover, since $\varepsilon\ge 1$, we have for every subset $S \subseteq S_{p_{\max}}$ that $1 \leq \prod_{k \in S}\varepsilon_{k} \leq x $, whenever $1\leq \prod_{k \in S_{p_{\max}}}\varepsilon_{k} \leq x$.
Therefore, for another node set $\tilde{S}$ with $S \cap \tilde S = \emptyset$ we have
\begin{align}\label{interval:est}
    \prod_{k \in S}\varepsilon_{k}\prod_{k \in \tilde S}\varepsilon_{k} \in [a,b] \quad\Rightarrow\quad \prod_{k \in \tilde S}\varepsilon_{k} \in [a/x,b].
\end{align}
Finally, since $\bar{d}_{ji}(p_m)=\bar{b}_{ji}$ and $d_{ji}(p_m)<d_{ji}(p_{\max})$ we have $S_{p_m}\setminus S_{p_{\max}} \neq \emptyset$. In total, we obtain
\begin{align*}
  \eqref{I12} & \leq \mathbb P\Big( \Big\{ \prod_{k \in S_{p_{\max}}}\varepsilon_{k} \leq x \Big\} \bigcap \Big\{ \prod_{k \in S_{p_m}}\varepsilon_{k} \in \left[\frac{b_{ji}} {d_{ji}(p_m)}, \frac{b_{ji}x} {d_{ji}(p_m)}\right] \Big\} \Big) \\
       & \leq  \mathbb P\Big(\prod_{k \in S_{p_{\max}}}\varepsilon_{k} \leq x \Big) \mathbb P\Big( \prod_{k \in S_{p_m}\setminus S_{p_{\max}}}\varepsilon_{k} \in \left[\frac{b_{ji}} { x d_{ji}(p_m)}, \frac{b_{ji}x} {d_{ji}(p_m)}\right] \Big)\\
       & = \mathbb P\Big(\prod_{k \in S_{p_{\max}}}\varepsilon_{k} \leq x \Big)  o(1), \quad x \downarrow 1,
    \end{align*}
    as the interval in the second probability gets arbitrarily small and the distribution of $\eps$ is atom-free.
Comparing this upper bound with \eqref{III_asymp} we can see that every summand of $I_{12}(x)$
is negligible with respect to $I_{11}(x)$ as $x \downarrow 1$. 
Since there are only finitely many nodes and hence finitely many paths from $j$ to $i$, we have proved that $I_{12}(x)=o(I_{11}(x))$ as $x\downarrow 1$. 
Hence,
\begin{align}\label{III_asymp2}
       I_1(x) \sim c_1 \ \mathbb P\Big( \prod_{k \in S_{p_{\max}}}\varepsilon_{k} \leq x \Big),\quad x\downarrow 1.
\end{align}
Next, we assume that $r=0$, i.e. that there is only one path $p_{\max}$ from $j$ to $i$. 
Then from \eqref{i1} we find that $I_1(x)=I_{11}(x)$ and simplifies \eqref{I11} to 
\begin{align*}
        I_1(x) &=\mathbb P\Big( \Big\{\prod_{k \in S_{p_{\max}}}\varepsilon_{k} \leq x \Big\} \bigcap   \bigcap_{l \in \An(i)}\Big\{\bigcap\limits_{p \in P_{li}\setminus P_{lji}}\Big\{ d_{li}(p) \prod_{k \in  S_{p}\cup\{l\}} \varepsilon_{k}Z_l  \leq  b_{ji} \prod_{k \in  S_{p_{\max}}} \varepsilon_{k} \bigvee_{l \in \An(j)} \bar{b}_{lj} Z_l  \Big\}\Big\} \Big)  \\ 
        &\geq \mathbb P\Big( \Big\{ \prod_{k \in S_{p_{\max}}}\varepsilon_{k} \leq x \Big\} \bigcap  \bigcap_{l \in \An(i)}\Big\{\bigcap\limits_{p \in P_{li}\setminus P_{lji}}\Big\{ d_{li}(p) \prod_{k \in  (S_{p}\cup\{l\})\setminus S_{p_{\max}}} \varepsilon_{k}Z_l  \leq  b_{ji}  \bigvee_{l \in \An(j)} \bar{b}_{lj} Z_l  \Big\}\Big\} \Big)  \\ 
       &= \mathbb P\Big(\bigcap_{l \in \An(i)}\Big\{\bigcap\limits_{p \in P_{li}\setminus P_{lji}}\Big\{ d_{li}(p) \prod_{k \in  (S_{p}\cup\{l\})\setminus S_{p_{\max}}} \varepsilon_{k}Z_l  \leq  b_{ji}  \bigvee_{l \in \An(j)} \bar{b}_{lj} Z_l  \Big\}\Big\} \Big) \, \mathbb P \Big( \prod_{k \in S_{p_{\max}}}\varepsilon_{k} \leq x\Big)  \\
       &= c_1 \ \mathbb P\Big( \prod_{k \in S_{p_{\max}}}\varepsilon_{k} \leq x \Big) 
\end{align*}
for $c_1>0$. On the other hand,  
\begin{align*}
        I_1(x) &\leq \mathbb P\Big(\bigcap_{l \in \An(i)}\Big\{\bigcap\limits_{p \in P_{li}\setminus P_{lji}}\Big\{ d_{li}(p) \prod_{k \in  (S_{p}\cup\{l\})\setminus S_{p_{\max}}} \varepsilon_{k}Z_l  \leq  b_{ji} x  \bigvee_{l \in \An(j)} \bar{b}_{lj} Z_l  \Big\}\Big\} \Big) \, \mathbb P \Big( \prod_{k \in S_{p_{\max}}}\varepsilon_{k} \leq x\Big)  \\
       &= c_2(x) \ \mathbb P\Big( \prod_{k \in S_{p_{\max}}}\varepsilon_{k} \leq x \Big),
\end{align*}
and, since $Z$ and $\eps$ are atom-free it again follows that $\lim_{x \downarrow 1} c_2(x)= c_1$ and therefore \eqref{III_asymp2} holds also for $r=0$. \\

We next show that $I_2(x)=o(I_1(x))$ as $x\downarrow 1$. 
Since $I_{12}(x)=o(I_{11}(x)$ as $x\downarrow 1$, we can and do assume that 
\begin{align}\label{pmaxcrit}
    \bar{b}_{ji}=b_{ji}\varepsilon_j \prod\limits_{k \in S_{p_{\max}}}\varepsilon_k.
\end{align}
Moreover, since for all paths $p\in P_{lji}$ we have $l\in\An(i)$ if and only if $l\in\An(j)$,
\begin{align*}
    U_i &=\bigvee_{l \in \An(i)}\bigvee\limits_{p \in P_{li}\setminus P_{lji}} d_{li}(p) \prod_{k \in  S_{p}\cup\{l\}} \varepsilon_{k}Z_l \vee \bigvee_{l \in \An(j)}\bigvee\limits_{p \in P_{lji}} d_{li}(p) \prod_{k \in  S_{p}\cup\{l\}} \varepsilon_{k}Z_l\\
    &=\bigvee_{l \in \An(i)}\bigvee\limits_{p \in P_{li}\setminus P_{lji}} d_{li}(p) \prod_{k \in  S_{p}\cup\{l\}} \varepsilon_{k}Z_l \vee b_{ji}  \prod_{k \in  S_{p_{\max}}} \varepsilon_{k} U_j
\end{align*}
by \eqref{pmaxcrit}.
Therefore, if
    $\bigvee_{l \in \An(i)}\bigvee\limits_{p \in P_{li}\setminus P_{lji}} d_{li}(p) \prod_{k \in  S_{p}\cup\{l\}} \varepsilon_{k}Z_l > b_{ji}  \prod_{k \in  S_{p_{\max}}} \varepsilon_{k} U_j,$
then
\begin{align}\label{Uirep}
    U_i=\bigvee_{l \in \An(i)}\bigvee\limits_{p \in P_{li}\setminus P_{lji}} d_{li}(p) \prod_{k \in  S_{p}\cup\{l\}} \varepsilon_{k}Z_l.
\end{align}
Moreover, it holds 
by \eqref{noise3rddef}, \eqref{randomb} and \eqref{randompathweight}
\begin{align}\label{eq:Ueps}
  U_i= \bigvee_{k \in \An(i)}\bar{b}_{ki}\tilde U_k\geq \bar{b}_{ji}\tilde U_j\geq b_{ji} \prod_{k \in S_{p_{\max}}}\varepsilon_k U_j.
\end{align}
Hence,  $U_i/U_j\leq b_{ji}x$ implies that $\prod_{k \in S_{p_{\max}}}\eps_k \leq x$. Therefore, 
using \eqref{1stdef_2}, \eqref{Uirep} and \eqref{pmaxcrit} we get 
\begin{align*}
    I_2(x)&=\mathbb P\Big(\frac{U_i}{U_j} \leq  b_{ji}x , U_i > \tilde U_j \bar{b}_{ji}\Big)=\mathbb P\Big(U_i \in (\tilde U_j \bar{b}_{ji}, \tilde U_j \eps_j b_{ji}x ]\Big) \\
    &\leq \mathbb P\Big(  \Big\{ \prod_{k \in S_{p_{\max}}}\varepsilon_{k} \leq x \Big\} \bigcap \Big\{  \bigvee_{l \in \An(i)}\bigvee\limits_{p \in P_{li}\setminus P_{lji}} d_{li}(p) \prod_{k \in  S_{p}\cup\{l\}} \varepsilon_{k}Z_l \in \Big(b_{ji}  \tilde U_j \eps_j \prod_{k \in S_{p_{\max}}}\eps_k , b_{ji} \tilde U_j \eps_j x\Big] \Big\}  \Big) \\
    &\leq \mathbb P\Big(  \Big\{ \prod_{k \in S_{p_{\max}}}\varepsilon_{k} \leq x \Big\} \bigcap \Big\{  \bigvee_{l \in \An(i)}\bigvee\limits_{p \in P_{li}\setminus P_{lji}} d_{li}(p) \prod_{k \in  S_{p}\cup\{l\}} \varepsilon_{k}Z_l \in \left(b_{ji} \tilde U_j \eps_j  , b_{ji} \tilde U_j \eps_j x\right] \Big\}  \Big),
\end{align*}
since $\eps \geq 1$.
Using that $U_j=\tilde U_j \varepsilon_j$ and $j \not \in S_{p_{\max}}$ and the same argument as in \eqref{interval:est}, we get 
\begin{align*}
     &I_2(x) \leq \mathbb P\Big(  \Big\{ \prod_{k \in S_{p_{\max}}}\varepsilon_{k} \leq x \Big\} \bigcap \Big\{  \bigvee_{l \in \An(i)}\bigvee\limits_{p \in P_{li}\setminus P_{lji}} d_{li}(p) \prod_{k \in  (S_{p}\cup\{l\})\setminus S_{p_{\max}}} \varepsilon_{k}Z_l \in \left(\frac{b_{ji}\tilde  U_j \eps_j}{x}   , b_{ji}\tilde U_j \eps_j x\right] \Big\}  \Big) \\
     &= \mathbb P\Big(  \Big\{ \prod_{k \in S_{p_{\max}}}\varepsilon_{k} \leq x \Big\} \Big) \mathbb P \Big( \Big\{  \bigvee_{l \in \An(i)}\bigvee\limits_{p \in P_{li}\setminus P_{lji}} \frac{d_{li}(p)}{\varepsilon_j} \prod_{k \in  (S_{p}\cup\{l\})\setminus S_{p_{\max}}} \varepsilon_{k}Z_l \in \Big(\frac{b_{ji} \tilde  U_j}{x}   , b_{ji}\tilde U_j x\Big] \Big\}  \Big),
\end{align*}
since $\mathcal D$ being acyclic implies that $\tilde U_j$ and $\eps_j$ are independent of $\varepsilon_k$ for every $k \in S_{p_{\max}}$. For $x \downarrow 1$ the interval in the second probability gets arbitrarily small. Since the distribution of the noise-variables is atom-free and the left-hand side contains $\varepsilon_j$  that is not included in $\tilde U_j$, this probability tends to zero as $x \downarrow 1$. Comparing this upper bound with \eqref{III_asymp2} we can see that $I_2(x)=o(I_1(x))$ as $x\downarrow 1$. Since $I_{12}(x)=o(I_{11}(x))$, we have
\begin{align} \label{I_to_II11}
       I(x) \sim I_1(x) \sim I_{11}(x) \sim 
       c_1 \ \mathbb P\Big( \prod_{k \in S_{p_{\max}}}\varepsilon_{k} \leq x \Big),
       \quad x\downarrow 1,
\end{align}
holds, where the last asymptotic equivalence follows from \eqref{III_asymp2}. Moreover, we have  by \eqref{i1}, using \eqref{randompathweight},\eqref{randomb} and \eqref{1stdef_2},
\begin{align}
     I(x) \sim I_{11}(x)& = \mathbb P\Big(\bigvee\limits_{p \in P_{ji}} d_{ji}(p)\prod_{k \in S_p}\varepsilon_{k}  \leq  b_{ji}x , \bar{d}_{ji}(p_{\max})=\bar{b}_{ji} , U_i =\tilde U_j \bar{b}_{ji} \Big) \notag \\
    & = \mathbb P\Big(\bar{d}_{ji}(p_{\max})/\eps_j \leq  b_{ji}x , U_i =\tilde  U_j \bar{d}_{ji}(p_{\max}) \Big) \notag \\
    & = \mathbb P\Big(\prod_{k \in S_{p_{\max}}}\varepsilon_{k}  \leq  x, U_i = U_j b_{ji}\prod_{k \in S_{p_{\max}}}\varepsilon_{k} \Big) ,\quad x\downarrow 1, \label{I11:alt}
\end{align}
which, together with  \eqref{I_to_II11}, proves the result. 
\halmos

\medskip

\textsc{Proof of Corollary~\ref{epscount_prop}. }
    We first show the result for $P_{ji}\setminus\{p_1,\ldots,p_n\}\neq \emptyset$, i.e. there exists a path $p$ from $j$ to $i$ with $d_{ji}<b_{ji}$. We start as in the proof of Theorem~\ref{epscount} for $x \geq 1$
\begin{align*}
    I(x) = I_1(x)  + I_2(x) 
\end{align*}
and similarly to \eqref{I(x)}, we again apply the law of total probability to $I_1(x)$ 
    \begin{align*}
       I_1(x) =& \mathbb P\Big(\bigvee\limits_{p \in P_{ji}} d_{ji}(p)\prod_{k\in S_p}\varepsilon_{k}  \leq  b_{ji}x, U_i =\tilde U_j \bar{b}_{ji} \Big) \\
        =&\mathbb P\Big(\bigvee\limits_{p \in P_{ji}} d_{ji}(p)\prod_{k \in S_p}\varepsilon_{k}  \leq  b_{ji}x ,  \bigvee\limits_{p \in \{p_1,\ldots,p_n\}}\bar{d}_{ji}(p)=\bar{b}_{ji} , U_i =\tilde U_j \bar{b}_{ji} \Big) \\
       + & \ \mathbb P\Big(\bigvee\limits_{p \in P_{ji}} d_{ji}(p)\prod_{k \in S_p}\varepsilon_{k}  \leq  b_{ji}x , \bigvee\limits_{p \in P_{ji}\setminus\{p_1,\ldots,p_n\}}\bar{d}_{ji}(p) = \bar{b}_{ji} , U_i =\tilde U_j \bar{b}_{ji} \Big) \\
        =&\mathbb P\Big(\bigvee\limits_{p \in P_{ji}} d_{ji}(p)\prod_{k \in S_p}\varepsilon_{k}  \leq  b_{ji}x , \bigvee\limits_{p \in P_{ji}\setminus\{p_1,\ldots,p_n\}}\bar{d}_{ji}(p)\leq  \bigvee\limits_{p \in \{p_1,\ldots,p_n\}}\bar{d}_{ji}(p) , U_i =\tilde U_j \bar{b}_{ji} \Big) \\
       + & \ \mathbb P\Big(\bigvee\limits_{p \in P_{ji}} d_{ji}(p)\prod_{k \in S_p}\varepsilon_{k}  \leq  b_{ji}x , \bigvee\limits_{p \in P_{ji}\setminus\{p_1,\ldots,p_n\}}\bar{d}_{ji}(p)>  \bigvee\limits_{p \in \{p_1,\ldots,p_n\}}\bar{d}_{ji}(p) , U_i =\tilde U_j \bar{b}_{ji} \Big) \\
       =&: \tilde I_{11}(x)+\tilde I_{12}(x).\notag
    \end{align*}
    With the same arguments as in the proof of Theorem~\ref{epscount} we find upper and lower bounds for $\tilde I_{11}(x)$. Analogously to \eqref{I11_ext2} and \eqref{I11} we find
    \begin{align*}
        \tilde I_{11}(x)= &\mathbb P \Big( \bigcap\limits_{p \in P_{ji}\setminus \{p_1,\ldots,p_n\}} \Big\{ \prod_{k \in S_p}\varepsilon_k  \leq \frac{b_{ji}}{d_{ji}(p)} \bigvee\limits_{\tilde p \in \{p_1,\ldots,p_n\}} \prod_{k \in  S_{\tilde p}} \varepsilon_{k}  \Big\} \bigcap  \bigcap\limits_{p \in \{p_1,\ldots, p_n\}}\bigg\{\prod\limits_{k \in  S_{p} } \varepsilon_{k} \leq x \bigg\}   \\
    &   \bigcap \bigcap_{l \in \An(i)}\Big\{\bigcap\limits_{p \in P_{li}\setminus P_{lji}}\Big\{ d_{li}(p) \prod_{k \in  S_{p} \cup \{l\}} \varepsilon_{k}Z_l  \leq  b_{ji} \bigvee\limits_{\tilde p \in \{p_1,\ldots,p_n\}} \prod_{k \in  S_{\tilde p}} \varepsilon_{k} \bigvee_{l \in \An(j)} \bar{b}_{lj} Z_l  \Big\}\Big\} \Big) \\
    \geq &\mathbb P \Big( \bigcap\limits_{p \in P_{ji}\setminus \{p_1,\ldots,p_n\}} \Big\{ \prod_{k \in S_p\setminus(\cup_{i=1}^nS_{p_i})}\varepsilon_k  \leq \frac{b_{ji}}{d_{ji}(p)}   \Big\} \bigcap  \bigcap\limits_{p \in \{p_1,\ldots, p_n\}}\bigg\{\prod\limits_{k \in  S_{p} } \varepsilon_{k} \leq x \bigg\}   \\
    &   \bigcap \bigcap_{l \in \An(i)}\Big\{\bigcap\limits_{p \in P_{li}\setminus P_{lji}}\Big\{ d_{li}(p) \prod_{k \in  S_{p} \cup \{l\}\setminus(\cup_{i=1}^nS_{p_i})} \varepsilon_{k}Z_l  \leq  b_{ji}  \bigvee_{l \in \An(j)} \bar{b}_{lj} Z_l  \Big\}\Big\} \Big) \\
    = & c_1 \ \mathbb P \Big(  \bigcap\limits_{p \in \{p_1,\ldots, p_n\}}\Big\{\prod\limits_{k \in  S_{p} } \varepsilon_{k} \leq x \Big\}  \Big) 
    \end{align*}
    and analogously to \eqref{upperboundI11} we have 
        \begin{align*}
        \tilde I_{11}(x) \leq  &\mathbb P \Big( \bigcap\limits_{p \in P_{ji}\setminus \{p_1,\ldots,p_n\}} \Big\{ \prod_{k \in S_p\setminus(\cup_{i=1}^nS_{p_i})}\varepsilon_k  \leq \frac{b_{ji}}{d_{ji}(p)}x   \Big\} \bigcap  \bigcap\limits_{p \in \{p_1,\ldots, p_n\}}\bigg\{\prod\limits_{k \in  S_{p} } \varepsilon_{k} \leq x \bigg\}   \\
    &   \bigcap \bigcap_{l \in \An(i)}\Big\{\bigcap\limits_{p \in P_{li}\setminus P_{lji}}\Big\{ d_{li}(p) \prod_{k \in  S_{p} \cup \{l\}\setminus(\cup_{i=1}^nS_{p_i})} \varepsilon_{k}Z_l  \leq  b_{ji}x  \bigvee_{l \in \An(j)} \bar{b}_{lj} Z_l  \Big\}\Big\} \Big) \\
    =& c_2(x) \ \mathbb P \Big(  \bigcap\limits_{p \in \{p_1,\ldots, p_n\}}\Big\{\prod\limits_{k \in  S_{p} } \varepsilon_{k} \leq x \Big\}  \Big)
    \end{align*}
    With the same arguments as in the previous proof, we can show that $c_1 \in (0,1)$ and $c_2(x) \to c_1$ for $x \downarrow 1$ and $\tilde I_{12}(x)=o(\tilde I_{11}(x))$ and $I_{2}(x)=o(I_{1}(x))$. Hence, the result follows. If \mbox{$P_{ji}\setminus\{p_1,\ldots,p_n\}= \emptyset$} the result follows analogously. 
\halmos 

\medskip

\textsc{Proof of Corollary~\ref{epscount_cor} }
From Theorem \ref{epscount} we have as $x\downarrow 1$,
\begin{align*}
       \mathbb P\Big(\bigwedge\limits_{k=0}^n \frac{U^k_i}{U^k_j} \leq  b_{ji}x\Big)
       &=1-\Big(1-\mathbb P\Big( \frac{U_i}{U_j} \leq  b_{ji}x\Big)\Big)^n 
       = 1-\Big(1-c (1+ o(1)) \, \mathbb P\Big( \prod_{k \in S_{p}}\varepsilon_{k} \leq x\Big)\Big)^n \\
       &=1-\sum\limits_{k=0}^n\binom{n}{k}\Big(-c (1+ o(1))\, \mathbb P\Big( \prod_{k \in S_{p}}\varepsilon_{k} \leq x\Big)\Big)^k\sim c \, n \, \mathbb P\bigg(\prod\limits_{i=1}^{n} \varepsilon_{k_i} \leq x \bigg),
    \end{align*}
    where we have used 
    the binomial theorem and the fact that the summands for $k \geq 2$ are negligible when $n$ is fixed. 
\halmos 

\medskip

\textsc{Proof of Theorem~\ref{epscountmult}. }
    We give a proof for  $P_{ji}\setminus\{p_1\}\neq \emptyset$ and $P_{lm}\setminus\{p_2\}\neq \emptyset$, i.e. $p_1$ and $p_2$ are not the only paths from $j$ to $i$ and from $l$ to $m$, respectively. All other cases follow analogously.  By the law of total probability, we have
    \begin{align*}
        &\mathbb P\Big(\frac{U_i}{U_j} \leq  b_{ji}x_1, \frac{U_m}{U_l} \leq  b_{lm}x_2\Big) \\
        = &\mathbb P\Big(\prod_{k \in S_{p_1}} \varepsilon_{k} \leq x_1,\prod_{k \in S_{p_2}} \varepsilon_{k} \leq x_2,U_i=U_j b_{ji}\prod_{k \in S_{p_1}} \varepsilon_{k}, U_m=U_l b_{lm}\prod_{k \in S_{p_2}} \varepsilon_{k}\Big)\\
        + &\mathbb P\Big(\prod_{k \in S_{p_1}} \varepsilon_{k} \leq x_1,\frac{U_m}{U_l} \leq  b_{lm} x_2,U_i=U_j b_{ji}\prod_{k \in S_{p_1}} \varepsilon_{k}, U_m \neq U_l b_{lm}\prod_{k \in S_{p_2}} \varepsilon_{k}\Big)\\
        + &\mathbb P\Big(\frac{U_i}{U_j} \leq  b_{ji}x_1, \prod_{k \in S_{p_2}} \varepsilon_{k} \leq x_2,U_i \neq U_j b_{ji}\prod_{k \in S_{p_1}} \varepsilon_{k}, U_m=U_l b_{lm}\prod_{k \in S_{p_2}} \varepsilon_{k}\Big)\\
        + &\mathbb P\Big(\frac{U_i}{U_j} \leq  b_{ji}x_1,\frac{U_m}{U_l} \leq  b_{lm} x_2,U_i \neq U_j b_{ji}\prod_{k \in S_{p_1}} \varepsilon_{k}, U_m \neq U_l b_{lm}\prod_{k \in S_{p_2}} \varepsilon_{k}\Big) \\
        =: &I_1(x_1,x_2)+I_2(x_1,x_2)+I_3(x_1,x_2)+I_4(x_1,x_2)
    \end{align*}
    {We first consider $I_1(x_1,x_2)$. Observe that for $I_{11}(x)$ defined in \eqref{i1} we have by \eqref{I11:alt}
    \begin{align*}
       I_{11}(x)=\mathbb P\Big( \prod_{k \in S_{p_{\max}}}\varepsilon_{k}  \leq  x , U_i = U_j b_{ji} \prod_{k \in S_{p_{\max}}}\eps_k \Big) 
    \end{align*}
    and hence, $I_1(x_1,x_2)$ is the bivariate extension to $I_{11}(x)$. For this reason, we can follow the proof of Theorem~\ref{epscount} at \eqref{I11_ext}, we again find upper and lower bounds based on the decomposition}
    \begin{small}
    \begin{align*}
   & \Big\{\bigvee\limits_{p \in P_{ji}\setminus \{p_1\}} \prod_{k \in S_p}\varepsilon_k  \leq \frac{b_{ji}}{d_{ji}(p)}\prod_{k \in  S_{p_1}} \varepsilon_{k}  \Big\} \bigcap \Big\{\bigvee\limits_{p \in P_{lm}\setminus \{p_2\}} \prod_{k \in S_p}\varepsilon_k  \leq \frac{b_{lm}}{d_{lm}(p)}\prod_{k \in  S_{p_2}} \varepsilon_{k}  \Big\} \bigcap  \\
        &\Big\{ \prod_{k \in  S_{p_1}} \varepsilon_{k} \leq x_1 \Big\} \bigcap  \bigcap_{n \in \An(i)}\Big\{\bigcap\limits_{p \in P_{ni}\setminus P_{nji}}\Big\{ d_{ni}(p) \prod_{k \in  S_{p}\cup \{n\}} \varepsilon_{k}Z_n  \leq  b_{ji} \prod_{k \in  S_{p_1}} \varepsilon_{k} \bigvee_{n \in \An(j)} \bar{b}_{nj} Z_n  \Big\}\Big\} \bigcap \\
        &   \Big\{ \prod_{k \in  S_{p_2}} \varepsilon_{k} \leq x_2 \Big\}   \bigcap  \bigcap_{n \in \An(m)}\Big\{\bigcap\limits_{p \in P_{nm}\setminus P_{nlm}}\Big\{ d_{nm}(p) \prod_{k \in  S_{p}\cup \{n\}} \varepsilon_{k}Z_n  \leq  b_{lm} \prod_{k \in  S_{p_2}} \varepsilon_{k} \bigvee_{n \in \An(l)} \bar{b}_{nl} Z_n  \Big\}\Big\}.
    \end{align*}
    \end{small}
Now for three paths $p$, $p_1$ and $p_2$ and a node $i$ we denote 
$$S_{p + i \setminus p_1+p_2}:=(S_{ p}\cup\{i\})\setminus(S_{p_1}\cup S_{p_2})\quad\mbox{and}\quad 
S_{p \setminus p_1+p_2}:=S_{ p}\setminus(S_{p_1}\cup S_{p_2}).$$ 
On the set $\{ \prod_{k \in  S_{p_1}} \varepsilon_{k} \leq x_1\} \cap  \{ \prod_{k \in  S_{p_2}} \varepsilon_{k} \leq x_2 \}$ we have for $x_1,x_2>1$, since $\eps>1$,
\begin{align*}
    &\Big\{\bigvee\limits_{p \in P_{ji}\setminus \{p_1\}} \prod_{k \in S_p}\varepsilon_k  \leq \frac{b_{ji}}{d_{ji}(p)}\prod_{k \in  S_{p_1}} \varepsilon_{k}  \Big\} = \Big\{\bigvee\limits_{p \in P_{ji}\setminus \{p_1\}} \prod_{k \in S_p \setminus S_{p_1}}\varepsilon_k  \leq \frac{b_{ji}}{d_{ji}(p)}\prod_{k \in  S_{p_1} \setminus S_p} \varepsilon_{k}  \Big\} \\ 
    \supseteq  &\Big\{\bigvee\limits_{p \in P_{ji}\setminus \{p_1\}} \prod_{k \in S_p \setminus S_{p_1}}\varepsilon_k  \leq \frac{b_{ji}}{d_{ji}(p)}  \Big\} \supseteq  \Big\{\bigvee\limits_{p \in P_{ji}\setminus \{p_1\}} \prod_{k \in S_{ p \setminus p_1+p_2}}\varepsilon_k  \leq \frac{b_{ji}}{d_{ji}(p)x_2}  \Big\}
\end{align*}
as well as 
\begin{align*}
    &\bigcap_{n \in \An(i)}\Big\{\bigcap\limits_{p \in P_{ni}\setminus P_{nji}}\Big\{ d_{ni}(p) \prod_{k \in  S_{p}\cup \{n\}} \varepsilon_{k}Z_n  \leq  b_{ji} \prod_{k \in  S_{p_1}} \varepsilon_{k} \bigvee_{n \in \An(j)} \bar{b}_{nj} Z_n  \Big\}\Big\} \\
    =&\bigcap_{n \in \An(i)}\Big\{\bigcap\limits_{p \in P_{ni}\setminus P_{nji}}\Big\{ d_{ni}(p) \prod_{k \in  (S_{p}\cup \{n\})\setminus S_{p_1}} \varepsilon_{k}Z_n  \leq  b_{ji} \prod_{k \in  S_{p_1}\setminus S_p} \varepsilon_{k} \bigvee_{n \in \An(j)}\bar{b}_{nj} Z_n \Big\}\Big\} \\
    \supseteq&\bigcap_{n \in \An(i)}\Big\{\bigcap\limits_{p \in P_{ni}\setminus P_{nji}}\Big\{ d_{ni}(p) \prod_{k \in  (S_{p}\cup \{n\})\setminus S_{p_1}} \varepsilon_{k}Z_n  \leq  b_{ji}  \bigvee_{n \in \An(j)}\bigvee_{\tilde p \in P_{nj}} d_{nj}(\tilde p) \prod_{k \in  S_{\tilde p + n \setminus p_1+p_2}}\varepsilon_k  Z_n  \Big\}\Big\} \\
    \supseteq&\bigcap_{n \in \An(i)}\Big\{\bigcap\limits_{p \in P_{ni}\setminus P_{nji}}\Big\{ d_{ni}(p) \prod_{k \in   S_{ p + n \setminus p_1+p_2}} \varepsilon_{k}Z_n  \leq  \frac{b_{ji}}{x_2}  \bigvee_{n \in \An(j)}\bigvee_{\tilde p \in P_{nj}} d_{nj}(\tilde p) \prod_{k \in  S_{\tilde p + n \setminus p_1+p_2}}\varepsilon_k  Z_n  \Big\}\Big\}.
\end{align*}    
Therefore,
\begin{align*}
    &I_1(x_1,x_2) \geq \mathbb P \Big(\Big\{\bigvee\limits_{p \in P_{ji}\setminus \{p_1\}} \prod_{k \in S_{p \setminus p_1+p_2}}\varepsilon_k  \leq \frac{b_{ji}}{d_{ji}(p)x_2}  \Big\} \bigcap \Big\{\bigvee\limits_{p \in P_{lm}\setminus \{p_1\}} \prod_{k \in S_{p \setminus p_1+p_2}}\varepsilon_k  \leq \frac{b_{lm}}{d_{lm}(p)x_1}  \Big\}  \\
    &\bigcap \bigcap_{n \in \An(i)}\Big\{\bigcap\limits_{p \in P_{ni}\setminus P_{nji}}\Big\{ d_{ni}(p) \prod_{k \in  S_{p + n \setminus p_1+p_2}} \varepsilon_{k}Z_n  \leq  \frac{b_{ji}}{x_2}  \bigvee_{n \in \An(j)}\bigvee_{\tilde p \in P_{nj}} d_{nj}(\tilde p) \prod_{k \in  S_{\tilde p + n \setminus p_1+p_2}}\varepsilon_k  Z_n  \Big\}\Big\}  \\
    &\bigcap \bigcap_{n \in \An(m)}\Big\{\bigcap\limits_{p \in P_{nm}\setminus P_{nlm}}\Big\{ d_{nm}(p) \prod_{k \in  S_{p + n \setminus p_1+p_2}} \varepsilon_{k}Z_n  \leq  \frac{b_{lm}}{x_1}  \bigvee_{n \in \An(l)}\bigvee_{\tilde p \in P_{nl}} d_{nl}(\tilde p) \prod_{k \in  S_{\tilde p + n \setminus p_1+p_2}}\varepsilon_k  Z_n  \Big\}\Big\} \\
    & \bigcap \Big\{ \prod_{k \in  S_{p_1}} \varepsilon_{k} \leq x_1 \Big\} \bigcap \Big\{ \prod_{k \in  S_{p_2}} \varepsilon_{k} \leq x_2 \Big\} \Big) \\
    &=: c_3(x_1,x_2) \ \mathbb P \Big( \Big\{ \prod_{k \in  S_{p_1}} \varepsilon_{k} \leq x_1 \Big\} \bigcap \Big\{ \prod_{k \in  S_{p_2}} \varepsilon_{k} \leq x_2 \Big\} \Big).
\end{align*}
For an upper bound, observe that on $\{ \prod_{k \in  S_{p_1}} \varepsilon_{k} \leq x_1\} \cap  \{ \prod_{k \in  S_{p_2}} \varepsilon_{k} \leq x_2 \}$ we have
\begin{align*}
    &\Big\{\bigvee\limits_{p \in P_{ji}\setminus \{p_1\}} \prod_{k \in S_p}\varepsilon_k  \leq \frac{b_{ji}}{d_{ji}(p)}\prod_{k \in  S_{p_1}} \varepsilon_{k}  \Big\} = \Big\{\bigvee\limits_{p \in P_{ji}\setminus \{p_1\}} \prod_{k \in S_p \setminus S_{p_1}}\varepsilon_k  \leq \frac{b_{ji}}{d_{ji}(p)}\prod_{k \in  S_{p_1} \setminus S_p} \varepsilon_{k}  \Big\} \\ 
    \subseteq  &\Big\{\bigvee\limits_{p \in P_{ji}\setminus \{p_1\}} \prod_{k \in S_p \setminus S_{p_1}}\varepsilon_k  \leq \frac{b_{ji}x_1}{d_{ji}(p)}  \Big\} \subseteq  \Big\{\bigvee\limits_{p \in P_{ji}\setminus \{p_1\}} \prod_{k \in S_{p \setminus p_1+p_2}}\varepsilon_k  \leq \frac{b_{ji}x_1}{d_{ji}(p)}  \Big\}
\end{align*}
as well as 
\begin{align*}
    &\bigcap_{n \in \An(i)}\Big\{\bigcap\limits_{p \in P_{ni}\setminus P_{nji}}\Big\{ d_{ni}(p) \prod_{k \in  S_{p}\cup \{n\}} \varepsilon_{k}Z_n  \leq  b_{ji} \prod_{k \in  S_{p_1}} \varepsilon_{k} \bigvee_{n \in \An(j)} \bar{b}_{nj} Z_n  \Big\}\Big\} \\
    \subseteq&\bigcap_{n \in \An(i)}\Big\{\bigcap\limits_{p \in P_{ni}\setminus P_{nji}}\Big\{ d_{ni}(p) \prod_{k \in  (S_{p}\cup \{n\})\setminus S_{p_1}} \varepsilon_{k}Z_n  \leq  b_{ji} x_1 x_2 \bigvee_{n \in \An(j)}\bigvee_{\tilde p \in P_{nj}} d_{nj}(\tilde p) \prod_{k \in  S_{\tilde p + n \setminus p_1+p_2}}\varepsilon_k  Z_n  \Big\}\Big\} \\
    \subseteq&\bigcap_{n \in \An(i)}\Big\{\bigcap\limits_{p \in P_{ni}\setminus P_{nji}}\Big\{ d_{ni}(p) \prod_{k \in  S_{p + n \setminus p_1+p_2}} \varepsilon_{k}Z_n  \leq  b_{ji} x_1 x_2 \bigvee_{n \in \An(j)}\bigvee_{\tilde p \in P_{nj}} d_{nj}(\tilde p) \prod_{k \in  S_{\tilde p + n \setminus p_1+p_2}}\varepsilon_k  Z_n  \Big\}\Big\}.
\end{align*}       
For this reason, 
\begin{align*}
    &I_1(x_1,x_2) \leq c_4(x_1,x_2) \ \mathbb P \Big( \Big\{ \prod_{k \in  S_{p_1}} \varepsilon_{k} \leq x_1 \Big\} \bigcap \Big\{ \prod_{k \in  S_{p_2}} \varepsilon_{k} \leq x_2 \Big\} \Big),
\end{align*}
with 
\begin{align*}
    &c_4(x_1,x_2):= \mathbb P \Big(\Big\{\bigvee\limits_{p \in P_{ji}\setminus \{p_1\}} \prod_{k \in S_{p \setminus p_1+p_2}}\varepsilon_k  \leq \frac{b_{ji}x_1}{d_{ji}(p)}  \Big\} \bigcap \Big\{\bigvee\limits_{p \in P_{lm}\setminus \{p_1\}} \prod_{k \in S_{p \setminus p_1+p_2}}\varepsilon_k  \leq \frac{b_{lm}x_2}{d_{lm}(p)}  \Big\} \bigcap \\
    &\bigcap_{n \in \An(i)}\Big\{\bigcap\limits_{p \in P_{ni}\setminus P_{nji}}\Big\{ d_{ni}(p) \prod_{k \in  S_{p + n \setminus p_1+p_2}} \varepsilon_{k}Z_n  \leq  b_{ji} x_1 x_2  \bigvee_{n \in \An(j)}\bigvee_{\tilde p \in P_{nj}} d_{nj}(\tilde p) \prod_{k \in  S_{\tilde p + n \setminus p_1+p_2}}\varepsilon_k  Z_n  \Big\}\Big\} \bigcap \\
    &\bigcap_{n \in \An(m)}\Big\{\bigcap\limits_{p \in P_{nm}\setminus P_{nlm}}\Big\{ d_{nm}(p) \prod_{k \in  S_{p + n \setminus p_1+p_2}} \varepsilon_{k}Z_n  \leq  b_{lm} x_1 x_2  \bigvee_{n \in \An(l)}\bigvee_{\tilde p \in P_{nl}} d_{nl}(\tilde p) \prod_{k \in  S_{\tilde p + n \setminus p_1+p_2}}\varepsilon_k  Z_n  \Big\}\Big\} \Big).
\end{align*}
Since all random variables are continuous, $c_4(x_1,x_2)$ tends to $c_3(x_1,x_2)$ for $x_1,x_2\downarrow 1$, and 
\begin{align*}
    c_3(x_1,x_2) \leq c \leq c_4(x_1,x_2)
\end{align*}
with
\begin{align*}
    &c:= \mathbb P \Big(\Big\{\bigvee\limits_{p \in P_{ji}\setminus \{p_1\}} \prod_{k \in S_{p \setminus p_1+p_2}}\varepsilon_k  \leq \frac{b_{ji}}{d_{ji}(p)}  \Big\} \bigcap \Big\{\bigvee\limits_{p \in P_{lm}\setminus \{p_1\}} \prod_{k \in S_{p  \setminus p_1+p_2}}\varepsilon_k  \leq \frac{b_{lm}}{d_{lm}(p)}  \Big\} \bigcap \\
    &\bigcap_{n \in \An(i)}\Big\{\bigcap\limits_{p \in P_{ni}\setminus P_{nji}}\Big\{ d_{ni}(p) \prod_{k \in  S_{p + n \setminus p_1+p_2}} \varepsilon_{k}Z_n  \leq  b_{ji}   \bigvee_{n \in \An(j)}\bigvee_{\tilde p \in P_{nj}} d_{nj}(\tilde p) \prod_{k \in  S_{\tilde p + n \setminus p_1+p_2}}\varepsilon_k  Z_n  \Big\}\Big\} \bigcap \\
    &\bigcap_{n \in \An(m)}\Big\{\bigcap\limits_{p \in P_{nm}\setminus P_{nlm}}\Big\{ d_{nm}(p) \prod_{k \in  S_{p + n \setminus p_1+p_2}} \varepsilon_{k}Z_n  \leq  b_{lm}   \bigvee_{n \in \An(l)}\bigvee_{\tilde p \in P_{nl}} d_{nl}(\tilde p) \prod_{k \in  S_{\tilde p + n \setminus p_1+p_2}}\varepsilon_k  Z_n  \Big\}\Big\} \Big).
\end{align*}
Since $i \neq m$ we can use the same arguments as for $c_1$ in the proof of Theorem~\ref{epscount} to show that $c>0$.  
Therefore, we only need to show that $I_i(x_1,x_2)=o(I_1(x_1,x_2))$ for $i \in \{2,3,4\}$. 
It is obvious that $I_2(x_1,x_2)=o(I_1(x_1,x_2))$ implies the other two cases. Using the same arguments from the proof of Theorem~\ref{epscount} regarding $I_2(x)=o(I_1(x))$ and $I_{12}(x)=o(I_{11})(x)$, the result follows.
\halmos 

\section{Proofs of Section~4}\label{B}

\textsc{Proof of Proposition~\ref{a.s.convergence} }
     We first consider the convergence of the simple minimum ratio $\bigwedge_{k=1}^n (U_i^k/U_j^k)$. 
     For $j=i$ the result is obvious. Moreover, for $j \in \an(i)$ we have with Lemma~\ref{lemma1} d), for $x>0$,
    \begin{align*}
        \lim\limits_{n \to \infty} \mathbb{P} \Big(\Big \vert\bigwedge\limits_{k=1}^n \frac{U_i^k}{U_j^k} -b_{ji} \Big \vert > x\Big) = \lim\limits_{n \to \infty} \Big(\mathbb{P} \Big( \frac{U_i}{U_j} -b_{ji}  > x\Big)\Big)^n = 0,
    \end{align*}
    showing that the minimum ratio converges for $n \to \infty$ in probability to $b_{ji}$.
    Since $\bigwedge_{k=1}^n U_i^k/U_j^k$ is non-increasing, it converges almost surely. For $j \not\in \an(i)$ we have $b_{ji}=0$ and the same result holds. Therefore, the estimators \eqref{estimate2a}, \eqref{estimate2} and \eqref{minratios} converge almost surely. 
    Considering the inequality \eqref{ineq:estimator}
    for the estimator \eqref{estimate1}, this estimator as well converges almost surely. Finally using \eqref{maxel} and \eqref{maxel2}, the same also holds for the estimator \eqref{estimate3}.
\halmos 

\medskip

\textsc{Proof of Lemma~\ref{alg2_lemma} } 
    Assume first that the output $\hat{ \bB}$ is not idempotent. Then there exists an entry $\hat{ b}_{ji}$ in $\hat{ \bB}$ such that $\hat{ b}_{ji}<\hat{ b}_{jk}\hat{ b}_{ki}$. Therefore, since the input matrix $\check{\bB}$ is idempotent, we have $\hat{ b}_{ji}=0$ while $\hat{ b}_{jk}>0$ and $\hat{ b}_{ki}>0$. This is a contradiction to the if-condition on line~\ref{If-condition} in the algorithm. \\
    Now assume that there is an idempotent matrix $\bB'$ that preserves all values that are larger than $\delta_1$ but contains more zero entries. Then there is an entry $b'_{ji}$ such that $b'_{ji}=0$ while $\hat{ b}_{ji}>0$. Since $\hat{ b}_{ji}>0$ there must be some $k \in \{j+1,\ldots,i-1\}$ such that $\hat{ b}_{jk} \not \in S$ and $\hat{ b}_{ki} \not \in S$, otherwise we would have set $\hat{ b}_{ji}$ equal to zero. Because we sort pairs $(j,i)$ by distance and $(j,k)$ and $(k,i)$ both have smaller distance it must also hold that both, $\hat{ b}_{jk}$ and $\hat{ b}_{ki}$ are strictly greater than zero. In comparison, since $\bB'$ is idempotent, either $b'_{jk}$ or $b'_{ki}$ is equal to zero. \\
    Therefore, we have $b'_{jk}=0$ while $\hat{ b}_{jk}>0$ or $b'_{ki}=0$ while $\hat{ b}_{ki}>0$. In both cases, the distance compared to the pair $(j,i)$ is decreased. Repeating this argument we can assume that $(j-i)=1$. This, however, leads to a contradiction since $\hat{ b}_{ji}$ is set to zero for all pairs $(j,i)$ of distance one if $\check{b}_{ji}<\delta_1$. 
\halmos 

\section{Proofs of Section~5}\label{C}

We first provide some auxiliary results. Observe that for a random variable $Y \geq 0$ we have $Y\in RV_\al^0$, if and only if $1/Y\in RV_\al^\infty$.

\begin{proposition}[Karamata's Tauberian Theorem~1.7.1', \cite{BGT}] \label{lemma3.11}
    Let $U$ be a non-decreasing function on $\mathbb R$ with $U(x)=0$ for all $x<0$, and Laplace-Stieltjes transform $\hat U(s)=\int_{[0,\infty)}e^{-sx}\\ dU(x)< \infty$ for all large $s$.
    For $l \in RV_0^\infty$ and $c \geq 0, \rho\geq 0$, the following are equivalent
    \begin{alignat}{2}\label{LST}
        U(x)      & \sim cx^\rho l(1/x)/\Gamma(1+p), \qquad &   & x\downarrow 0 \notag\\
        \hat U(s) & \sim cs^{-\rho}l(s), \qquad             &   & s\to \infty.
    \end{alignat}
\end{proposition}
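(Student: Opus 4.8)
This proposition is Karamata's Tauberian theorem in the form of Theorem~1.7.1' of \cite{BGT}, so in the paper it is simply invoked as a citation; I sketch the classical argument. The equivalence splits into an Abelian half ($U\Rightarrow\hat U$) and a Tauberian half ($\hat U\Rightarrow U$), which require rather different tools. For the Abelian half I would start from the standard identity $\hat U(s)=s\int_0^\infty e^{-sx}U(x)\,dx$ (integration by parts, legitimate because $\hat U(s)<\infty$ for large $s$ forces $e^{-sx}U(x)\to0$), substitute $x=y/s$ to obtain $\hat U(s)=\int_0^\infty e^{-y}U(y/s)\,dy$, and then insert the hypothesis $U(y/s)\sim c(y/s)^{\rho}\ell(s/y)/\Gamma(1+\rho)$. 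Using $\ell(s/y)/\ell(s)\to1$ locally uniformly in $y$ and dominating the integrand by Potter's bounds, one passes to the limit under the integral and recognises $\int_0^\infty e^{-y}y^{\rho}\,dy=\Gamma(1+\rho)$, which yields $\hat U(s)\sim cs^{-\rho}\ell(s)$ as $s\to\infty$.

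For the Tauberian half — the substantive direction — the monotonicity of $U$ is exactly the Tauberian side condition that makes the implication go through. Let $\mu$ be the Lebesgue--Stieltjes measure of $U$ and, for $\lambda>0$, let $\mu_\lambda$ be its image under $x\mapsto\lambda x$, so that $\mu_\lambda([0,y])=U(y/\lambda)$ and $\int_0^\infty e^{-sy}\,\mu_\lambda(dy)=\hat U(\lambda s)$. From $\hat U(s)\sim cs^{-\rho}\ell(s)$ and the uniform convergence theorem for slowly varying functions one obtains, for each fixed $s>0$, $\hat U(\lambda s)/\hat U(\lambda)\to s^{-\rho}$ as $\lambda\to\infty$; equivalently the Laplace--Stieltjes transforms of the nonnegative measures $\nu_\lambda:=\mu_\lambda/\hat U(\lambda)$ converge pointwise on $(0,\infty)$ to $s\mapsto s^{-\rho}=\int_0^\infty e^{-sy}\,y^{\rho-1}/\Gamma(\rho)\,dy$ (interpreted as $1=\int e^{-sy}\,\delta_0(dy)$ when $\rho=0$). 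Invoking the extended continuity theorem for Laplace--Stieltjes transforms — which applies precisely because the $\nu_\lambda$ are genuine nonnegative measures (here the monotonicity of $U$ is used) and the limit transform is finite on $(0,\infty)$ — gives vague convergence $\nu_\lambda\to\nu$ with $\nu(dy)=y^{\rho-1}/\Gamma(\rho)\,dy$. Reading off the distribution functions at the continuity point $y=1$ yields $U(1/\lambda)/\hat U(\lambda)\to1/\Gamma(1+\rho)$, hence $U(1/\lambda)\sim c\lambda^{-\rho}\ell(\lambda)/\Gamma(1+\rho)$; substituting $x=1/\lambda$ gives the stated asymptotics of $U(x)$ as $x\downarrow0$.

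The main obstacle is the rigorous use of the continuity theorem in the Tauberian step: one must establish tightness of $\{\nu_\lambda\}$ on compact subsets of $(0,\infty)$ and uniqueness of the vague limit even though the limit measure $\nu$ has infinite mass near $0$ when $\rho<1$ and near $\infty$ when $\rho>1$. This is handled, as in Section~1.7 of \cite{BGT}, by truncating the exponential kernel between two levels and exploiting the monotonicity of $U$ to control both the upper and the lower tails of $\nu_\lambda$; the slowly varying factor $\ell$ introduces no further difficulty, since $\ell(\lambda s)/\ell(\lambda)\to1$ locally uniformly in $s$ is all the argument needs. As the paper uses this result only as a black box (through Lemma~\ref{ratiorv} and Theorem~\ref{3.11}), it suffices to cite \cite{BGT}.
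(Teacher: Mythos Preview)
Your proposal is correct: the paper states this proposition purely as a citation of Theorem~1.7.1' in \cite{BGT} and gives no proof, so there is nothing to compare. Your sketch of the classical Abel--Tauber argument (integration by parts plus Potter bounds for the Abelian half; the extended continuity theorem for Laplace--Stieltjes transforms, with monotonicity of $U$ as the Tauberian side condition, for the converse) is the standard one and is accurate.
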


From this we obtain the following result. 

\begin{lemma} \label{r.v.addup}
   \begin{enumerate}
   \item
   Let  $X \in RV_{\alpha_1}^0, Y \in RV_{\alpha_2}^0$ be independent, then
       $ X+Y\in RV_{\alpha_1+\alpha_2}^0$,
   \item
   Let $X,Y\geq1$ be independent and such that $\tilde X=\ln(X)\in RV_{\alpha_1}^0,\tilde Y = \ln(Y)\in RV_{\alpha_2}^0$. Then
   $( X  Y -1) \in RV_{\alpha_1+\alpha_2}^0$.
   \end{enumerate}
\end{lemma}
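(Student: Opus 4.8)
\textbf{Proof plan for Lemma~\ref{r.v.addup}.}

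The plan is to prove part (a) via Karamata's Tauberian theorem (Proposition~\ref{lemma3.11}) applied to the Laplace--Stieltjes transform of the distribution function, and then to deduce part (b) from part (a) by a Taylor-expansion argument. For part (a), write $F_X$ and $F_Y$ for the distribution functions of $X$ and $Y$. By assumption $F_X \in RV^0_{\alpha_1}$ and $F_Y \in RV^0_{\alpha_2}$, so we may write $F_X(x)=x^{\alpha_1}\ell_1(1/x)$ and $F_Y(x)=x^{\alpha_2}\ell_2(1/x)$ for slowly varying $\ell_1,\ell_2$ (at $\infty$). Let $\widehat F_X(s)=\int_{[0,\infty)}e^{-sx}\,dF_X(x)$ and likewise $\widehat F_Y$. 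By Proposition~\ref{lemma3.11}, the regular variation of $F_X$ at $0$ is equivalent to $\widehat F_X(s)\sim \Gamma(1+\alpha_1)^{-1}\Gamma(1+\alpha_1)s^{-\alpha_1}\ell_1(s) = s^{-\alpha_1}\ell_1(s)$ as $s\to\infty$, after absorbing the $\Gamma$-constant appropriately; precisely, $F_X(x)\sim x^{\alpha_1}\ell_1(1/x)$ as $x\downarrow 0$ is equivalent to $\widehat F_X(s)\sim \Gamma(1+\alpha_1)s^{-\alpha_1}\ell_1(s)$ as $s\to\infty$, and similarly for $Y$. Since $X$ and $Y$ are independent, $X+Y$ has distribution function $F_{X+Y}=F_X * F_Y$ (convolution), and hence $\widehat F_{X+Y}(s)=\widehat F_X(s)\widehat F_Y(s)\sim \Gamma(1+\alpha_1)\Gamma(1+\alpha_2)s^{-(\alpha_1+\alpha_2)}\ell_1(s)\ell_2(s)$ as $s\to\infty$. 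The product $\ell_1\ell_2$ is again slowly varying at $\infty$, so applying the converse direction of Proposition~\ref{lemma3.11} (now with $\rho=\alpha_1+\alpha_2$ and slowly varying part $\ell_1\ell_2$) yields
\begin{align*}
    F_{X+Y}(x)\sim \frac{\Gamma(1+\alpha_1)\Gamma(1+\alpha_2)}{\Gamma(1+\alpha_1+\alpha_2)}\, x^{\alpha_1+\alpha_2}\ell_1(1/x)\ell_2(1/x),\qquad x\downarrow 0,
\end{align*}
which is regularly varying at zero with exponent $\alpha_1+\alpha_2$; hence $X+Y\in RV^0_{\alpha_1+\alpha_2}$.

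For part (b), set $\tilde X=\ln X$ and $\tilde Y=\ln Y$, which are nonnegative (since $X,Y\ge 1$), independent, and by hypothesis $\tilde X\in RV^0_{\alpha_1}$, $\tilde Y\in RV^0_{\alpha_2}$. By part (a), $\tilde X+\tilde Y=\ln(XY)\in RV^0_{\alpha_1+\alpha_2}$. It remains to transfer regular variation at zero from $\ln(XY)$ to $XY-1$. This is a standard monotone-transformation argument: the map $t\mapsto e^{t}-1$ is continuous and strictly increasing on $[0,\infty)$ with $e^t-1\sim t$ as $t\downarrow 0$, and $XY-1=e^{\ln(XY)}-1$. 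Concretely, for $x\downarrow 0$ one has, with $W:=\ln(XY)$,
\begin{align*}
    \mathbb P(XY-1\le x)=\mathbb P\big(W\le \ln(1+x)\big),
\end{align*}
and since $\ln(1+x)\sim x$ as $x\downarrow 0$, the uniform convergence theorem for regularly varying functions gives $\mathbb P(W\le \ln(1+tx))/\mathbb P(W\le \ln(1+t))\to x^{\alpha_1+\alpha_2}$ as $t\downarrow 0$ for each $x>0$. Hence $XY-1\in RV^0_{\alpha_1+\alpha_2}$.

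The main obstacle is the careful bookkeeping in part (a): one must verify that the hypotheses of Proposition~\ref{lemma3.11} are genuinely met in both directions (finiteness of the Laplace--Stieltjes transforms for large $s$, which holds because $F_X,F_Y$ are supported on $[0,\infty)$ and are probability measures; monotonicity; the correct matching of the constant $c$ with the $\Gamma(1+\rho)$ normalization), and that the product of two slowly varying functions is slowly varying. A secondary technical point in part (b) is justifying that the asymptotic equivalence $\ln(1+x)\sim x$ at zero, combined with regular variation, is enough to conclude regular variation of the composed tail --- this follows from the uniform convergence theorem for regularly varying functions (see \cite{BGT}), but it is worth stating explicitly rather than glossing over, since regular variation is not in general preserved under arbitrary asymptotically-linear reparametrizations without some uniformity. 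Everything else is routine.
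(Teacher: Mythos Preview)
Your proposal is correct and follows essentially the same route as the paper: part (a) is proved via Karamata's Tauberian theorem applied to the Laplace--Stieltjes transforms and the convolution identity $\widehat F_{X+Y}=\widehat F_X\widehat F_Y$, and part (b) is reduced to (a) by the asymptotic equivalence $e^t-1\sim t$ (the paper simply says ``Taylor expansion''). Your write-up is more explicit about constants, slowly varying parts, and the uniform convergence justification in (b), but the argument is the same.
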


\begin{proof}
(a) \, We use Proposition~\ref{lemma3.11} a) for $U$ being $F_X$ or $F_Y$, the distribution function of $X$ or $Y$, respectively. Since the Laplace-Stieltjes transforms 
   $\hat{F}_X(s)=\int_{[0,\infty)}e^{-sx}dF_X(x)\leq1$ and $\hat{F}_Y(s)=\int_{[0,\infty)}e^{-sx}dF_Y(x)\leq1$ for all $s\ge 0$, by Proposition~\ref{lemma3.11},
   they are both regularly varying at $\infty$ in the sense of \eqref{LST};
   i.e., $\hat{F}_X\in RV_{\alpha_1}^\infty,\hat{F}_Y\in RV_{\alpha_2}^\infty$.
   By independence, the convolution theorem for Laplace-Stieltjes transforms gives
       $\hat{F}_{X+Y}(s)=
        \hat{F}_{X}(s)\hat{F}_{Y}(s)$
    and, therefore, $\hat{F}_{X+Y}\in RV_{\alpha_1+\alpha_2}^\infty$.
    Applying again Proposition~\ref{lemma3.11} we find that $X_1+X_2\in RV_{\alpha_1+\alpha_2}^0$.\\
    (b) \, This follows from a Taylor expansion. 
\end{proof}


\textsc{Proof of Lemma~\ref{ratiorv} }
By Theorem~\ref{epscount} we get for $x \downarrow 1$,
\begin{align*}
    \lim_{t \downarrow 0}\frac{\mathbb P (\ln(U_i/U_j)-\ln(b_{ji}) \leq t x)}{\mathbb P (\ln(U_i/U_j)-\ln(b_{ji}) \leq t)} &= \lim_{t \downarrow 0}\frac{\mathbb P (U_i/U_j \leq b_{ji}\exp(tx))}{\mathbb P (U_i/U_j \leq b_{ji}\exp(t))} \\
     = \lim_{t \downarrow 0}\frac{ c \ \mathbb P  \Big(\prod_{k \in S_p}\varepsilon_k \leq \exp(tx)\Big)}{ c \ \mathbb P  \Big(\prod_{k \in S_p}\varepsilon_k \leq \exp(t)\Big)} 
    & =  \lim_{t \downarrow 0}\frac{ \ \mathbb P  \Big(\sum_{k \in S_p} \ln( \varepsilon_k) \leq tx\Big)}{ \ \mathbb P  \Big(\sum_{k \in S_p}\ln(\varepsilon_k) \leq t\Big)} =x^{\zeta(p)\alpha}
\end{align*}
for $\zeta(p)=|S_p|$ by Lemma~\ref{r.v.addup}~a) and the fact that $\ln(\varepsilon_k)\in RV_{\alpha}^0$.
\halmos 

\medskip

For the proof of Theorem~\ref{3.11} we need the following distribution family.

\begin{definition}
    A positive random variable $Y$ is \emph{Fr\'echet distributed with shape $\alpha>0$ and scale $s>0$} and we write $Y \sim$ Fr\'echet$(\alpha,s)$ if the distribution function of $Y$ is given by
    \begin{align*}
        \Phi_{\alpha,s}(x)=\exp\left(-\left(\frac{x}{s}\right)^{- \alpha}\right), \quad x > 0.
    \end{align*}
\end{definition}    

The proof of Theorem~\ref{3.11} is divided into a proof of the one-dimensional marginal limit distributions, followed by the proof of the multidimensional result. 
We start with the one-dimensional limits.

\begin{proposition} \label{3.9} 
    Let $\bU$ be a recursive ML vector with propagating noise on a DAG $\mathcal D$ as defined in \eqref{1stdef_1} and assume that the path $p:=[j\to \dots \to i]$ from $j$ to $i$ is generic. Assume further that $\tilde\eps=\ln(\varepsilon) \in RV_\alpha^0$.
    For the node set $S_{p}$ choose $a_n\sim F_{\sum_{k \in S_p}\tilde \varepsilon_k}^\leftarrow(1/n)$ as $\nto$. Let $\bU^1,\ldots,\bU^n $ be an iid sample from $\bU$. Then
    \begin{align*}
        \lim_{n\to\infty} \mathbb P \left( \frac1{ a_n b_{ji}}\Big(
        \bigwedge\limits_{k=1}^n U_i^k/ U_j^k-b_{ji}\Big)\leq x  \right) =  \Psi_{(\zeta(p)\alpha,c^{1/(\zeta(p)\alpha)})}(x), \quad x>0,
    \end{align*}
    for the same constant $c$ as in Theorem~\ref{epscount}, and $\Psi_{\alpha,s}$ denotes the Weibull distribution from Definition~\ref{defn:5.1}. 
\end{proposition}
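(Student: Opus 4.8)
The plan is to reduce the limit law for the minimum ratio to a standard extreme value argument for the minimum of i.i.d.\ positive random variables whose common distribution is regularly varying at its left endpoint. Write $Y^k := U_i^k/U_j^k - b_{ji} \ge 0$ for $k=1,\dots,n$; these are i.i.d.\ copies of $Y:=U_i/U_j - b_{ji}$, which by Lemma~\ref{lemma1}~d) is a nonnegative random variable with $0$ in its support and an atom-free distribution. The quantity we must understand is $\bigwedge_{k=1}^n Y^k = \bigwedge_{k=1}^n U_i^k/U_j^k - b_{ji}$. The key input is that $Y\in RV^0_{\zeta(p)\alpha}$: indeed, by Theorem~\ref{epscount},
\begin{align*}
  \mathbb P(Y \le b_{ji}(x-1)) = \mathbb P(U_i/U_j \le b_{ji}x) \sim c\,\mathbb P\Big(\prod_{k\in S_p}\varepsilon_k \le x\Big) = c\,\mathbb P\Big(\sum_{k\in S_p}\tilde\varepsilon_k \le \ln x\Big),\quad x\downarrow 1,
\end{align*}
and since $\ln x \sim x-1$ as $x\downarrow 1$ and $\sum_{k\in S_p}\tilde\varepsilon_k \in RV^0_{\zeta(p)\alpha}$ by Lemma~\ref{r.v.addup}~a) (using $|S_p|=\zeta(p)$ and $\tilde\varepsilon\in RV^0_\alpha$), we obtain $\mathbb P(Y\le t) \sim c\,\mathbb P(\sum_{k\in S_p}\tilde\varepsilon_k \le t/b_{ji})$ as $t\downarrow 0$, hence $Y\in RV^0_{\zeta(p)\alpha}$ with the same slowly varying part as $\sum_{k\in S_p}\tilde\varepsilon_k$ up to the constants $c$ and $b_{ji}$.

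Next I would apply the classical extreme value result for minima: if $F_Y(t)\sim c\,\ell(t)t^{\zeta(p)\alpha}$ as $t\downarrow0$ with $\ell$ slowly varying at $0$ (equivalently, $F_Y$ is in the domain of attraction of a Weibull law), then for a normalizing sequence $b_n\downarrow0$ with $n F_Y(b_n)\to 1$ one has $\mathbb P(b_n^{-1}\bigwedge_{k=1}^n Y^k \le x)\to 1-e^{-\nu(x)}$ where $\nu(x)=\lim_n n F_Y(b_n x)$. Concretely, using $1-(1-F_Y(b_n x))^n \to 1-\exp(-\lim_n nF_Y(b_n x))$, it remains to choose $b_n$ correctly and evaluate the limit. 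I would take $b_n := a_n b_{ji}$ with $a_n \sim F^\leftarrow_{\sum_{k\in S_p}\tilde\varepsilon_k}(1/n)$; then $\mathbb P(\sum_{k\in S_p}\tilde\varepsilon_k \le a_n)\sim 1/n$, so $n F_Y(b_n)\sim n\,c\,\mathbb P(\sum \tilde\varepsilon_k \le a_n) \to c$, and by regular variation $nF_Y(b_n x) = nF_Y(a_n b_{ji} x)\sim n\,c\,\mathbb P(\sum\tilde\varepsilon_k\le a_n x)\to c\,x^{\zeta(p)\alpha}$ for every fixed $x>0$. Therefore
\begin{align*}
  \mathbb P\Big(\frac{1}{a_n b_{ji}}\big(\textstyle\bigwedge_{k=1}^n U_i^k/U_j^k - b_{ji}\big)\le x\Big) = 1-\big(1-F_Y(a_n b_{ji}x)\big)^n \longrightarrow 1-\exp\big(-c\,x^{\zeta(p)\alpha}\big),
\end{align*}
which is exactly $\Psi_{\zeta(p)\alpha,\,c^{1/(\zeta(p)\alpha)}}(x)$ by Definition~\ref{defn:5.1}, since $1-\exp(-(x/s)^{\zeta(p)\alpha}) = 1-\exp(-c\,x^{\zeta(p)\alpha})$ for $s=c^{-1/(\zeta(p)\alpha)}$... more precisely $s = c^{1/(\zeta(p)\alpha)}$ after taking $(x/s)^{\zeta(p)\alpha}=c\,x^{\zeta(p)\alpha}$, i.e.\ $s^{-\zeta(p)\alpha}=c$, so $s=c^{-1/(\zeta(p)\alpha)}$; I would double-check this constant against the statement and adjust the definition of $a_n$ or $s$ to make the two agree (the paper writes scale $(c^{(ji)})^{1/(\zeta\alpha)}$, so one checks the convention in Definition~\ref{defn:5.1}).

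The main obstacle, and the only genuinely delicate point, is the passage from the asymptotic equivalence in Theorem~\ref{epscount}, which holds as $x\downarrow 1$ (equivalently $t\downarrow0$ for $F_Y$), to the statement $nF_Y(a_n b_{ji}x)\to c\,x^{\zeta(p)\alpha}$: one needs that the ``$\sim$'' is uniform enough, which is precisely what regular variation delivers via the uniform convergence theorem for regularly varying functions (\cite{BGT}, Theorem~1.5.2), applied to the tail $F_Y$ at $0$. I would spell this out: writing $F_Y(t) = c\,(1+o(1))\,G(t/b_{ji})$ with $G$ the distribution function of $\sum_{k\in S_p}\tilde\varepsilon_k$, and $G\in RV^0_{\zeta(p)\alpha}$, the product $n\,G(a_n x)$ converges to $x^{\zeta(p)\alpha}$ locally uniformly in $x$ because $a_n\downarrow0$ and $G$ is regularly varying, and the $(1+o(1))$ factor is harmless since $a_n b_{ji}x\downarrow 0$. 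Everything else — the identification of the Weibull limit, monotonicity giving a.s.\ convergence of the minimum ratios, and matching constants — is routine.
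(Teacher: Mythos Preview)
Your proof is correct and follows essentially the same approach as the paper: both reduce the problem to regular variation of the ratio at its left endpoint via Theorem~\ref{epscount} and Lemma~\ref{r.v.addup}, and then apply the standard extreme value limit for minima. Your version is slightly more direct --- you work with $Y=U_i/U_j-b_{ji}$ and the Weibull minimum domain of attraction, whereas the paper passes through $X=\ln(U_i/U_j)-\ln(b_{ji})$, then to $1/X$ and the Fr\'echet maximum domain of attraction, and finally Taylor expands back; but the ingredients and logic are the same.

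Your hesitation about the scale parameter is well placed: from $1-\exp(-c\,x^{\zeta(p)\alpha})=\Psi_{\zeta(p)\alpha,s}(x)$ one gets $s^{-\zeta(p)\alpha}=c$, i.e.\ $s=c^{-1/(\zeta(p)\alpha)}$, and indeed the paper's own proof has $\tilde a_n\sim c^{-1/(\zeta(p)\alpha)}a_n$ but then substitutes $c^{1/(\zeta(p)\alpha)}x$ on the right-hand side, which flips the sign on the exponent. Your computation of the limit $1-\exp(-c\,x^{\zeta(p)\alpha})$ is the correct one; the scale in the statement appears to carry a sign typo.
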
 

\begin{proof}
    Define $X:=\ln(U_i/U_j)-\ln(b_{ji})$ with distribution function $F_X$. 
    Then by Lemma~\ref{ratiorv}, $X \in RV^0_{\zeta(p)\alpha}$, which implies that 
    $1/X \in RV^{\infty}_{\zeta(p)\alpha}$. 
    Using e.g. Theorem~3.3.7 of \cite{EKM},
    for 
    \begin{align}\label{DAF2}
    a_{1/X}(n) &\sim F^\leftarrow_{1/X}(1-1/n)\sim 1/F^\leftarrow_{X}(1/n)\to\infty,\quad n\to\infty,
    \end{align}
     we get 
    \begin{align*}
        \lim_{n\to\infty} \mathbb P \Big(\bigvee_{k=1}^n \Big(\frac{1}{X}\Big)^k \leq a_{1/X}(n)x \Big)
        &= \lim_{n\to\infty} \mathbb P \Big( \bigwedge_{k=1}^nX^k \geq \frac{1}{a_{1/X}(n)x}  \Big)
        = \Phi_{\zeta(p)\alpha,1}(x), \quad x>0,
    \end{align*}
    which implies by the continuity of $X$,
    \begin{align}\label{DAF1}
       \lim_{n\to\infty} \mathbb P \Big( \bigwedge_{k=1}^nX^k \leq \frac{x}{a_{1/X}(n)}  \Big) = 1- \Phi_{\zeta(p)\alpha,1}(1/x),\quad x>0.
    \end{align}
   Choose now
    \begin{align}\label{28}
        \tilde a_n \sim F_X^\leftarrow(1/n) \sim 1/a_{1/X}(n)\downarrow 0, \quad n\to\infty.
    \end{align}
    Hence, we have with \eqref{DAF1} and \eqref{28} by Lemma~\ref{ratiorv},
    \begin{align}\label{mda}
        \lim_{n\to\infty} \mathbb P \Big(\frac1{ \tilde a_n}\Big(
        \bigwedge\limits_{k=1}^n\ln(U_i^k/ U_j^k)-\ln(b_{ji})\Big) \leq x \Big) = 1- \Phi_{\zeta(p)\alpha,1}(1/x),\quad x>0.
    \end{align}
    Recall from Theorem~\ref{epscount} and the regular variation of $ \tilde\eps$, that for the same $c$ as defined in Theorem~\ref{epscount} we have
    $$F_X(x)\sim c \ \mathbb P\Big(\sum_{k \in S_p}\wt\eps_k\le x\Big)\sim \mathbb P\Big(\sum_{k \in S_p}\wt\eps_k\le x  c^{1/(\zeta(p)\alpha)}\Big)= F_{\sum_{k \in S_p}\wt\eps_k}(x  c^{1/(\zeta(p)\alpha)}),\quad x\downarrow 0,$$ 
    which implies that $1/n\sim F_X(\tilde a_n)\sim F_{\sum_{k \in S_p}\wt\eps_k}(\tilde a_n c^{1/(\zeta(p)\alpha)}) $.
    For the generalized inverses this implies that 
    $$\tilde a_n\sim F_X^{\leftarrow}(1/n)\sim c^{-1/(\zeta(p)\alpha)} F^{\leftarrow}_{{\sum_{k \in S_p}\wt\eps_k}}(1/n) \sim c^{-1/(\zeta(p)\alpha)} a_n, \quad n\to\infty.$$
    From this we find
    \begin{align*}
         \mathbb P\Big( \frac1{ \tilde a_n}\Big(
        \bigwedge\limits_{k=1}^n\ln(U_i^k/ U_j^k)-\ln(b_{ji})\Big)\leq x \Big)
        = \mathbb P \Big(\bigwedge\limits_{k=1}^n U_i^k/ U_j^k\leq \exp( \tilde  a_nx)b_{ji} \Big),\quad x>0.
    \end{align*}
    A Taylor expansion around 0 yields $\exp( \tilde a_n x)=1+ \tilde a_n x(1+o(1))$ as $n\to\infty$, because $\tilde a_n\downarrow 0$.
    Since for $x>0$,
    \begin{align*}
        \lim_{n\to\infty}  \mathbb P\Big( \frac1{ \tilde a_nb_{ji}}\Big(
        \bigwedge\limits_{k=1}^nU_i^k/ U_j^k-b_{ji}\Big)\leq x \Big)
        = \lim_{n\to\infty}  \mathbb P \Big( \frac1{ a_n b_{ji}}\Big(
        \bigwedge\limits_{k=1}^nU_i^k/ U_j^k-b_{ji}\Big)\leq c^{1/(\zeta(p)\alpha)}x \Big),
    \end{align*}
    we obtain with \eqref{mda}  
    \begin{align*}
       &\lim_{n\to\infty} \mathbb P \Big( \frac1{ a_n b_{ji}}\Big(
        \bigwedge\limits_{k=1}^nU_i^k/ U_j^k-b_{ji}\Big)\leq x  \Big) = 1- \Phi_{\zeta(p)\alpha,c^{-1/(\zeta(p)\alpha)}}(1/x) 
        =\Psi_{\zeta(p)\alpha,c^{1/(\zeta(p)\alpha)}}(x),
    \end{align*}
    which proves the assertion.
\end{proof}

\medskip

Now we can prove Theorem~\ref{3.11}.

\medskip

\textsc{Proof of Theorem~\ref{3.11} }
As we shall find asymptotic independence of estimates between different node pairs, it suffices to prove the bivariate result.
 
We first simplify notation as follows.
Assume pairs of nodes $(j,i)\neq (l,m)$ and denote the generic paths $p_1=p_{ji}$ with node set $S_{p_1}$ and $p_2=p_{lm}$ with node set $S_{p_2}$, respectively. 
Further denote $a^{1}_n=a^{(ji)}_n$ and $a^{2}_n=a^{(lm)}_n$.

By Proposition~\ref{3.9} we have
\begin{align*}
        \lim_{n\to\infty} \mathbb P \Big( \frac1{ a_n^1 b_{ji}}\Big(
        \bigwedge\limits_{k=1}^n U_i^k/ U_j^k-b_{ji}\Big)\geq x_1  \Big) =  \Phi_{\zeta(p_1)\alpha,c_1^{-1/(\zeta(p_1)\alpha)}}(1/x_1)=\exp\Big(-\frac{x_1^{\zeta(p_1)\alpha}}{c_1}\Big), 
    \end{align*}
for $x_1>0$. Since 
$    \lim_{n \to \infty}(1-\frac{a}{n})^n=\exp(-a)$ 
for $a \in \mathbb R$, we get by independence of the ratios $U_i^k/U_j^k$ for $k=1,\dots,n$ as $n\to\infty$,
\begin{align} \label{asym_prob_1}
    \mathbb P \Big( \frac1{ a_n^1 b_{ji}}
         \Big(U_i/ U_j-b_{ji}\Big)\leq x_1  \Big)=\frac{x_1^{\zeta(p_1)\alpha}}{c_1n}(1 + o(1)), \quad x_1>0.
\end{align}
With the same argument, we have 
\begin{align} \label{asym_prob_2}
    \mathbb P \Big( \frac1{ a_n^2 b_{lm}}
         \Big(U_m/ U_l-b_{lm}\Big)\leq x_2  \Big)=\frac{x_2^{\zeta(p_2)\alpha}}{c_2n}(1 + o(1)), \quad x_2>0.
\end{align}
Therefore, we have on the one hand
\begin{align} \label{comp_1}
    &\lim_{n\to\infty} \mathbb P \Big( \frac1{ a_n^1 b_{ji}}
        \bigwedge\limits_{k=1}^n \Big(U_i^k/ U_j^k-b_{ji}\Big)\geq x_1  \Big)\mathbb P \Big( \frac1{ a_n^2 b_{lm}}
        \bigwedge\limits_{k=1}^n \Big(U_m^k/ U_l^k-b_{lm}\Big)\geq x_2  \Big) \notag \\ 
        =& \exp\Big(-\frac{x_1^{\zeta(p_1)\alpha}}{c_1}\Big)\exp\Big(-\frac{x_2^{\zeta(p_2)\alpha}}{c_2}\Big)=\exp\Big(-\frac{x_1^{\zeta(p_1)\alpha}}{c_1}-\frac{x_2^{\zeta(p_2)\alpha}}{c_2}\Big), \quad x_1,x_2>0,
\end{align}
whereas, on the other hand, we have by independence of the bivariate ratios $(U_i^k/U_j^k, U_m^k/U_l^k)$ for $k=1,\dots,n$,
\begin{align}\label{rev_prob}
    & \lim\limits_{n \to \infty}\mathbb P \Big( \frac1{ a_n^1 b_{ji}}
        \bigwedge\limits_{k=1}^n \Big(U_i^k/ U_j^k-b_{ji}\Big)\geq x_1, \frac1{ a_n^2 b_{lm}}
        \bigwedge\limits_{k=1}^n \Big(U_m^k/ U_l^k-b_{lm}\Big)\geq x_2  \Big) \notag  \\
        =& \lim\limits_{n \to \infty} \Big\{ \mathbb P \Big( \frac1{ a_n^1 b_{ji}}
        \Big(U_i^k/ U_j^k-b_{ji}\Big)\geq x_1, \frac1{ a_n^2 b_{lm}}
         \Big(U_m^k/ U_l^k-b_{lm}\Big)\geq x_2 \Big)\Big\}^n \notag  \\
         =&\lim\limits_{n \to \infty} \Big\{1-\mathbb P \Big( \frac1{ a_n^1 b_{ji}}
         \Big(U_i/ U_j-b_{ji}\Big)\leq x_1  \Big)-\mathbb P \Big( \frac1{ a_n^2 b_{lm}}
         \Big(U_m/ U_l-b_{lm}\Big)\leq x_2  \Big)+  \notag \\
         & \quad \mathbb P \Big( \frac1{ a_n^1 b_{ji}}
         \Big(U_i/ U_j-b_{ji}\Big)\leq x_1, \frac1{ a_n^2 b_{lm}}
         \Big(U_m/ U_l-b_{lm}\Big)\leq x_2  \Big)\Big\}^n.
\end{align}
By 
\eqref{eq:Ueps} we have 
$U_i \geq b_{ji} \prod_{k \in S_{p_1}}\varepsilon_k U_j$ 
and $U_m\geq b_{lm} \prod_{k \in S_{p_2}}\varepsilon_k U_l$,
which implies 
\begin{align}\label{comm_prob}
    &\mathbb P \Big( \frac1{ a_n^1 b_{ji}}
         \Big(U_i/ U_j-b_{ji}\Big)\leq x_1, \frac1{ a_n^2 b_{lm}}
         \Big(U_m/ U_l-b_{lm}\Big)\leq x_2  \Big) \\ 
         \leq  &\mathbb P \Big( \frac1{ a_n^1 }
         \Big( \prod_{k \in S_{p_1}}\varepsilon_k-1\Big)\leq x_1, \frac1{ a_n^2 }
         \Big( \prod_{k \in S_{p_2}}\varepsilon_k-1\Big)\leq x_2  \Big). \notag
\end{align}
Since $(j,i)\neq (l,m)$, either $S_{p_1}\setminus S_{p_2}\neq \emptyset$ or $S_{p_2}\setminus S_{p_1}\neq \emptyset$
and without loss of generality, we assume $S_{p_2}\setminus S_{p_1}\neq \emptyset$. 
Since $\varepsilon \geq 1$ and all $\varepsilon_k$ are independent, we get 
\begin{align}\label{comm_prob2}
    \eqref{comm_prob}
         \leq &\mathbb P \Big( \frac1{ a_n^1 }
         \Big( \prod_{k \in S_{p_1}} \varepsilon_k-1\Big)\leq x_1,   \frac1{ a_n^2 }
         \Big(  \prod_{k \in S_{p_2}\setminus S_{p_1}}\varepsilon_k-1\Big)\leq x_2 \Big)\notag \\
         = &\mathbb P \Big( \frac1{ a_n^1 }
         \Big( \prod_{k \in S_{p_1}} \varepsilon_k-1\Big)\leq x_1\Big) \mathbb P \Big( \frac1{ a_n^2 }
         \Big(  \prod_{k \in S_{p_2}\setminus S_{p_1}}\varepsilon_k-1\Big)\leq x_2  \Big). 
\end{align}
By Lemma~\ref{r.v.addup}~b) we know that $(\prod_{k \in S_{p_1}}\varepsilon_k-1)\in RV^0_{\zeta(p_1)\alpha}$. Moreover, observe that by a Taylor expansion we have $a_n^1 \sim F^{\leftarrow}_{\sum_{k\in S_p}\tilde\eps_k}(1/n) \sim F^{\leftarrow}_{\prod_{k\in S_p}\eps_k-1}(1/n)$ as $n\to\infty$.
Therefore, by Theorem~3.3.7 of \cite{EKM} as in the proof of Proposition~\ref{3.9}, similarly to \eqref{mda}, it holds that 
\begin{align*}
   \lim\limits_{n \to \infty} \mathbb P \Big( \frac1{ a_n^1 }
         \Big( \bigwedge\limits_{t=1}^n \prod_{k \in S_{p_1}} \varepsilon^t_k-1\Big)\geq x_1\Big)=\Phi_{\zeta(p_1)\alpha,1}(1/x_1)=\exp\Big(-x_1^{\zeta(p_1)\alpha}\Big),\quad x_1 >0.
\end{align*}
We proceed as in \eqref{asym_prob_1} and \eqref{asym_prob_2} to obtain as $n\to\infty$
\begin{align}\label{asym_prob_3}
    \mathbb P \Big( \frac1{ a_n^1 }
         \Big(  \prod_{k \in S_{p_1}} \varepsilon_k-1\Big)\leq x_1\Big)=\Big(\frac{x_1^{\zeta(p_1)\alpha}}{n}\Big)(1+o(1)), \quad x_1>0.
\end{align}
Moreover, {since $S_{p_2}\setminus S_{p_1}\neq \emptyset$, $\varepsilon$ is atom-free and $a_n^2 \to 0$ as $n \to \infty$,}
we have
\begin{align}\label{asymp_prob4}
    \mathbb P \Big( \frac1{ a_n^2 }
         \Big( \prod_{k \in S_{p_2}\setminus S_{p_1}}\varepsilon_k-1\Big)\leq x_2  \Big)=o(1), \quad x_2>0.
\end{align}
Therefore, we have by \eqref{comm_prob2}, \eqref{asym_prob_3} and \eqref{asymp_prob4}
\begin{align*}
    \eqref{comm_prob} = \Big(\frac{x_1^{\zeta(p_1)\alpha}}{n}\Big)(1+o(1))o(1), \quad x_1, x_2>0.
\end{align*}
Comparing this with \eqref{asym_prob_1} and \eqref{asym_prob_2}, we find that the last term in \eqref{rev_prob} is negligible.
Hence, we obtain 
\begin{align*}
&\lim\limits_{n \to \infty}\mathbb P \Big( \frac1{ a_n^1 b_{ji}}
        \bigwedge\limits_{k=1}^n \Big(U_i^k/ U_j^k-b_{ji}\Big)\geq x_1, \frac1{ a_n^2 b_{lm}}
        \bigwedge\limits_{k=1}^n \Big(U_m^k/ U_l^k-b_{lm}\Big)\geq x_2  \Big) \\
    =&\lim\limits_{n \to \infty} \Big(1-\frac{x_1^{\zeta(p_1)\alpha}}{c_1n}(1 + o(1))-\frac{x_2^{\zeta(p_2)\alpha}}{c_2n}(1 + o(1))\Big)^n
    =\exp\Big(-\frac{x_1^{\zeta(p_1)\alpha}}{c_1}-\frac{x_2^{\zeta(p_2)\alpha}}{c_2}\Big).
\end{align*}
Comparing this to \eqref{comp_1} yields the result.

\halmos

\end{document}